\newcommand\reallywidehat[1]{%
\savestack{\tmpbox}{\stretchto{%
  \scaleto{%
    \scalerel*[\widthof{\ensuremath{#1}}]{\kern-.6pt\bigwedge\kern-.6pt}%
    {\rule[-\textheight/2]{1ex}{\textheight}}
  }{\textheight}%
}{0.5ex}}%
\stackon[1pt]{#1}{\tmpbox}%
}
\newcommand{\Addresses}{{
  \bigskip
  \footnotesize

  Damjan PI\v{S}TALO, E-mail: \textit{damjan.pistalo@gmail.com}, MathSciNet Author ID:1292911
}}
\theoremstyle{plain}
\tikzset{>=stealth',every on chain/.append style={join},
         every join/.style={->}}
\tikzset{
    >=stealth',
    punkt/.style={
           rectangle,
           rounded corners,
           draw=black, very thick,
           text width=6.5em,
           minimum height=2em,
           text centered},
    pil/.style={
           ->,
           thick,
           shorten <=2pt,
           shorten >=2pt,}
}
\xpatchcmd{\subequations}{\alph{equation}}{-\arabic{equation}}{}{}
\newcommand{\op}[1]{\!\!\mathop{\rm ~#1}\nolimits}
\newtheorem{theo}{Theorem}
\newtheorem{rem}[theo]{Remark}
\newtheorem{prop}[theo]{Proposition}
\newtheorem{cor}[theo]{Corollary}
\newtheorem{defi}[theo]{Definition}
\newtheorem{theorem}[theo]{Theorem}
\newtheorem{lemma}[theo]{Lemma}
\newsavebox{\pullback}
\sbox\pullback{%
\begin{tikzpicture}%
\draw (0,0) -- (1ex,0ex);%
\draw (1ex,0ex) -- (1ex,1ex);%
\end{tikzpicture}}
   \numberwithin{equation}{section}
\title{Homotopical algebra of Lie-Rinehart pairs}
\author{Damjan Pi\v{s}talo}
\date{\today}
\subjclass[2020]{ 55U35, 17B55, 16W25}
\begin{document}

\maketitle

\begin{abstract}

Dwyer-Kan localization at pairs of quasi-isomorphisms of the category of dg Lie-Rinehart pairs $(A,M)$, where $A$ is a semi-free cdga over a field $k$ of characteristic zero and $M$ a cell complex in $A$-modules, is shown to be equivalent to that of strong homotopy Lie-Rinehart (SH LR) pairs satisfying the same cofibrancy condition. Latter is a category of fibrant objects. We introduce cofibrations of SH LR pairs, construct factorizations, and prove lifting properties. Applying them, we show uniqueness up to homotopy of certain BV-type resolutions. Restricting to dg LR pairs whose underlying cdga is of finite type, and using a different (co)fibrancy condition, we show that the functor $(A,M)\mapsto A$ is a Cartesian fibration with presentable fibers. The two (co)fibrancy conditions yield equivalent $\infty$-categories under Dwyer-Kan localization.
\end{abstract}

\tableofcontents

\section{Introduction}
Lie algebroids have found a great number of applications in different branches of geometry and physics, such as foliation theory, Poisson geometry, field theory, and beyond. It is hence expected that a solid homotopy theory for Lie algebroids would be a useful tool in homotopical geometry. 

In the algebraic setting, one speaks of (sheaves of) dg Lie-Rinehart pairs.
To date, homotopy theory of such objects has been developed mostly with morphisms over a fixed derived (affine) scheme (\cite{Nui1}), which is a context suitable for applications in derived deformation theory (\cite{Nui2}). For general morphisms of dg LR pairs, not much has been done systematically.

A morphism of dg Lie-Rinehart pairs $(f_0,f):(A,M)\to (B,N)$ is a morphism of cdga-s $f_0:B\to A$ together with a morphism of $A$-modules $f:M\to A\otimes_B N$, compatible with anchors and brackets. It factors uniquely through the Lie-Rinehart pullback $(A,f_0^!N)=(A, \op{Der}(A)\times_{\op{Der(A,B)}}B\otimes_A N)$. With this, under a finiteness condition on the underlying cdga-s, and with appropriate (co)fibrancy conditions, the functor which associates to a dg LR pair its underlying affine derived scheme (opposite cdga) is a Cartesian fibration (Theorem \ref{cartfib}).  In the absence of a "pushforward", the functor is not a Cartesian opfibration, hence there is no particular reason to expect the category of dg Lie-Rinehart pairs be a presentable $\infty$-category.  The aim of this paper is to show that, apart from the existence of (homotopy) colimits, much of abstract homotopy theory still applies. In fact, most of the results are proved without the above finiteness condition.

Throughout the paper, $k$ is a fixed field of characteristic zero. The category of dg LR pairs $(A,M)$, with $A$ semi-free cdga and  $M$ a cell complex in $A$-modules, localized at pairs $(f_0,f)$ of quasi-isomorphisms, is denoted by ${\tt dgLR}(k)^{\op{cof}}$. It is proven to be equivalent to the localization of the category of strong homotopy Lie-Rinehart pairs which satisfy the same cofibrancy condition. Latter category is the Dwyear-Kan localization of a category of fibrant objects (\cite{P}). Worth noting is that we distinguish between two generalities of strong homotopy morphisms: \emph{linear}, and  \emph{general}. Although both notions give categories of fibrant objects, there is a difference in regards to the lifting properties (Lemma \ref{lemma19}, Lemma \ref{lemma21}). By fiddling with Koszul duality for Lie algebras, we establish in Theorem \ref{theo1} the equivalence
$$\mathcal{L}_{-}C_{-}:{\tt SHLR}^{\op{cof,lin}}(k)\rightleftarrows {\tt dgLR}(k)^{\op{cof}}:\imath$$ between the infinity categories of dg LR pairs and SH LR pairs with linear SH morphisms. Similar constructions in different contexts appear in \cite{Nui1}, and \cite{CCN}. In Theorem \ref{theo2}, we show that the inclusion of a wide subcategory
$$ {\tt SHLR}^{\op{cof,lin}}(k)\hookrightarrow {\tt SHLR}^{\op{cof}}(k)$$ is also an equivalence of infinity categories, by explicitly constructing its right adjoint, for which adjunction's unit and counit are evidently weak equivalences.  It should be noted that Theorem \ref{theo1} is valid in the more general setting where $A$ is any cdga. However, in this situation, it is unclear if the equivalence extends to general morphisms. On the level of 1-categorical Gabriel-Zisman localization, the equivalent categories of Theorems \ref{theo1} and \ref{theo2} are further equivalent to the category of SH LR pairs $(A,M)$ with general $A$ (and $M$ a cell complex of $A$-modules), and general morphisms (Corollary \ref{GZ}).

In section \ref{cart}, we consider dg Lie-Rinehart pairs whose underlying cdga-s are of finite type.
Among them, we distinguish \emph{nice} dg LR pairs, which are nice in the sense that the pullback of a nice dg LR pair is both of the correct homotopy type, and a nice dg LR pair itself. On one side, the category of nice dg LR pairs is equivalent (under the Dwyer-Kan localization) to ${\tt dgLR}^{\op{ft,cof}}(k)$, and on the other side,
$$p:{\tt dgLR}^{\op{ft,nice}}(k)\to {\tt cdga}^{\op{ft}}(k)^{\op{op}},\hspace{10pt}(A,M)\mapsto A$$
is a cartesian fibration. Its fibers are presentable infinity categories studied in \cite{Nui1}. 

Sections \ref{lift} and \ref{BV} do not have the same infinity categorical flavor, focusing instead on "classical" computational tools in abstract homotopy theory, beyond what is given by the structure of a category of fibrant objects. 

In section \ref{lift},  after defining cofibrations of SH LR pairs, we prove lifting properties and establish tools such as factorizations and lifting properties. This is the first instance where the difference between linear and general morphisms comes into play: In the lifting diagrams with linear morphisms over a fixed cdga, the lift is also a linear morphism. In general, the lift needs not be linear, even if all the maps in the lifting diagram are.

In section \ref{BV}, we deal with resolutions of dg LR pairs $(A,M)$ by SH LR pairs $(QA,QM)$ with $QA$ semi-free, and $QM$ a cell complex in $QA$ modules. Here, the machinery of section \ref{lift} is used to prove the usual uniqueness-up-to-homotopy. The section is largely motivated by the B(F)V-BRST complex, and builds on homotopical-algebraic interpretation of Koszul-Tate resolutions studied in \cite{PP}.

The remaining sections serve to introduce the 1-categorical notions used throughout the paper.

In section \ref{one}, we recall the notions of dg LR pairs and their morphisms, and we review the free–forgetful adjunction to anchored modules.

Section \ref{two} deals with SH LR pairs and their (linear and general) morphisms. In the literature  (\cite{Vit}, \cite{Hueb}), an SH LR pair $(A,M)$ is commonly defined in terms of a multiderivative on $M[1]$. Equivalently, it is a dg $A$-module $M$ together with an $L_{\infty}$-structure subjected to a certain Leibniz rule, and an $A$-multilinear $L_{\infty}$ morphism (anchor) $\Gamma:M\rightsquigarrow \op{Der}(A)$. Definition of a SH morphism $(A,M)\rightsquigarrow(B,N)$, applicable to SH LR pairs $(A,M)$ with $M$ a dualizable $A$-module, is that it is opposite to a morphism of the Chevalley-Eilemberg complexes $f^*:(\widehat{\op{Sym}}_B(N^*[-1]),d_{\op{CE}})\to (\widehat{\op{Sym}}_A(M^*[-1]),d_{\op{CE}})$, whose composition with the augmentation $\widehat{\op{Sym}}_A(M^*)\to A$  factors through the augmentation $\widehat{\op{Sym}}_B(M^*)\to B$. Applying the language of differential graded pro-algebras developed in \cite{P}, the definition extends straightforwardly from dualizable to cofibrant, and even graded-flat modules. These morphisms are referred to as \emph{general}. \emph{Linear} SH morphisms additionally satisfy the condition $f^*(A)\subseteq B$. Characterization of linear SH morphisms in terms of SH LR structure is provided in the Definition \ref{defLin}. Over a fixed base cdga $A$, they are $A$-multilinear morphisms of $L_\infty$-algebras, which respect the anchors.

\subsection*{Notation}

We adopt cohomological convention: differentials raise degree by 1.

The category of non-positively graded differential  graded-commutative algebras over $k$ is denoted by ${\tt cdga}(k).$ Given $A\in{\tt cdga}(k)$, ${\tt Mod}(A)$ denotes the category of unbounded cochain complexes of $k$-vector spaces with an $A$-action. Explicitly, a cochain complex $M$ is an $A$-module if it is a graded $A$-module, and its differential satisfies the graded Leibniz identity
\begin{equation}\label{leib1}d(a\cdot m)=d_Aa\cdot m+(-1)^{|a|}a\cdot dm.\end{equation}
Morphisms are required to preserve the $A$-module structure and respect the differential.

The free graded symmetric algebra of an $A$-module $ M$ is 
$$\op{Sym}_{ A}M=\oplus_{k\in\mathbb{N}}{ M}^{\otimes_{ A} k}/{\mathcal{I}},$$ where $\mathcal{I}$ is the ideal generated by $\{m\otimes n- (-1)^{|m|\cdot|n|} n\otimes m:m,n\in M\}$.
Its multiplication (the graded symmetric tensor product) is denoted by $\odot$.  The index $k$ is called weight. $A$-module of weight $k$ elements in $ \op{Sym}_{A}M$ is denoted by $\op{Sym}_{ A}^kM$. Sometimes, the symmetric product  $m_1\odot\cdots\odot m_k$ is denoted by $m_{1\ldots k}$.

The group of all the permutations of a set with $k$ elements is denoted by $\Sigma_k$. The set of $(l,m)$-unshuffles is denoted by $\op{Sh}(l,m)\subset \Sigma_{l+m}$. 

Given a $k$-permutation $\sigma\in \Sigma_k$ together with $m_1,\ldots, m_k\in M$, the number $|m_1,\ldots,m_k|_{\sigma}\in \{\pm 1\}$ is implicitly defined by
$$m_{\sigma(1)}\odot\ldots\odot m_{\sigma(k)}=(-1)^{|m_1,\ldots,m_k|_{\sigma}}m_{1}\odot\ldots\odot m_{1}.$$

$\mathbb{N}$ denotes the set of natural numbers including zero. The set of natural numbers without zero is denoted by $\mathbb{N}_{>0}.$
\subsection*{Acknowledgments} The author gratefully acknowledges that the majority of this work was conducted during a postdoctoral fellowship at the University of Luxembourg.

\section{The category of dg Lie-Rinehart pairs}\label{one}
To begin with, we recall the notions of anchored modules and LR pairs, and establish certain preliminary results in somewhat greater generality then in the available literature, but by no means unexpected. Of particular importance is the free forgetful adjunction between anchored modules and LR pairs, which simultaneously extends \cite[Theorem 2.1.2]{Kap} to a variable base algebra and to the (d)g setting.

\begin{defi}
\emph{(Differential) graded anchored module} is a pair $(A,M)$, for $A\in{\tt c(d)ga}(k)$, $M\in {\tt Mod}(A)$, together with a morphism of $A$-modules $\Gamma:M\to\op{Der}(A)$ called anchor.

\emph{A morphism of (differential) graded anchored modules} $(f_0,f):(A,M)\to (B,N)$ is a morphisms $f_0:B\to A$ of (differential) graded algebras together with a morphism of $A$-modules $f:M\to A\otimes_B N$ such that the diagram
\begin{center}
\begin{tikzcd}[column sep=large]
M\arrow[r,"f"]\arrow[d]&\arrow[d]A\otimes_B N\\
\op{Der}(A)\arrow[r,"-\circ f_0"]&\op{Der}(B,A).
\end{tikzcd}
\end{center}
commutes. Vertical map on the right is given by $a\otimes n\mapsto a\cdot f_0\circ
\Gamma(n).$

The category of (d)g anchored modules is denoted by ${\tt(d)gAnch}(k)$.
\end{defi}

\begin{defi}
\emph{A (differential) graded Lie-Rinehart pair} is a (d)g anchored module $(A,M)$ together with a (d)g Lie algebra structure on $M$ such that
\begin{itemize}
\item $\Gamma:M\to \op{Der}(A)$ is a morphism of (d)g Lie algebras; and 
\item the Lie bracket in $M$ satisfies the graded Leibniz rule
$$[m,an]=\Gamma(m)(a)\cdot n+(-1)^{|a||m|}a\cdot[m,n].$$
\end{itemize}.

\emph{A morphism of (differential) graded Lie-Rinehart pairs} $(f_0,f):(B,M)\to(A,N)$ is a morphism of anchored modules compatible with the brackets, in the sense that for $f(m)=\sum_i b_i\otimes n_i$, $f(m')=\sum_j b'_j\otimes n'_j$,
\begin{equation}\label{LRmor}f([m,m'])=\sum_{i,j}(-1)^{|b'_j||n_i|}b_ib'_j\otimes[n_i,n'_j]+\sum_j\Gamma(m)(b'_j)\otimes n'_j-(-1)^{|m||m'|}\sum_i\Gamma(m')(b_i)\otimes n_i.\end{equation}

The category of (d)g Lie-Rinehart pairs is denoted by ${\tt (d)gLR}(k).$
\end{defi}

\begin{theorem}
The forgetful functor $\op{For}:{\tt (d)gLR}(k)\to{\tt(d)gAnch}(k)$ admits a left adjoint free Lie-Rinehart functor, denoted by $\op{LR}$.
\end{theorem}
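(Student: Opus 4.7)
The plan is to give an explicit construction of the free LR pair on a (d)g anchored module $(A,M)$, and then verify the universal property directly. Since a morphism of (d)g LR pairs is a morphism of anchored modules further satisfying \eqref{LRmor}, it suffices to produce, for each anchored module $(A,M)$, a (d)g LR pair $\op{LR}(A,M)=(A,L(M))$ together with a unit morphism $\eta:(A,M)\to \op{For}(A,L(M))$ of anchored modules, through which every anchored-module morphism $(f_0,f):(A,M)\to \op{For}(B,N)$ with $(B,N)\in {\tt (d)gLR}(k)$ factors uniquely. The base cdga is not enlarged; only the module part is.

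$L(M)$ is constructed inductively on bracket depth. Set $L^{(1)}=M$ with its given $A$-module structure, anchor $\Gamma^{(1)}:=\Gamma$, and differential. Having produced a (d)g $A$-module $L^{(n)}$ with an anchor $\Gamma^{(n)}:L^{(n)}\to\op{Der}(A)$, let $L^{(n+1)}$ be the (d)g $A$-module generated by $L^{(n)}$ together with formal brackets $[x,y]$ for $x,y\in L^{(n)}$, subject to $k$-bilinearity, graded antisymmetry, the Leibniz relation
\[[x,a\cdot y]=\Gamma^{(n)}(x)(a)\cdot y+(-1)^{|a||x|}a\cdot [x,y],\]
and the differential compatibility $d[x,y]=[dx,y]+(-1)^{|x|}[x,dy]$. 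Extend the anchor by $\Gamma^{(n+1)}([x,y]):=[\Gamma^{(n)}(x),\Gamma^{(n)}(y)]$; this is compatible with the Leibniz relation because the commutator in $\op{Der}(A)$ already satisfies that relation, and it is a chain map because $\Gamma^{(n)}$ is and $d$ is a derivation of $[-,-]$ on $\op{Der}(A)$. Finally, let $L(M)$ be the quotient of $\op{colim}_n L^{(n)}$ by the graded Jacobi relation; anchor and differential descend since $(\op{Der}(A),[-,-])$ satisfies graded Jacobi.

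Universality is essentially forced: given an anchored-module morphism $(f_0,f):(A,M)\to \op{For}(B,N)$, equation \eqref{LRmor} dictates, recursively in $n$, the value on each $L^{(n)}$ of any hypothetical LR extension $\tilde f$. Well-definedness on $L^{(n+1)}$ modulo the defining relations, and on $L(M)$ modulo Jacobi, follows precisely from the LR axioms of the target $(B,N)$: antisymmetry and Jacobi of $[-,-]_N$, the Leibniz rule in $(B,N)$, and compatibility of $f$ with anchors, which is built into the anchored-module data.

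The main obstacle is the combinatorial and sign bookkeeping required at each inductive step, to ensure that the Leibniz quotient does not collapse previously distinct elements of $L^{(n)}$ and does not obstruct the extensions of the anchor and the differential. In the ungraded, non-differential setting with a variable base, this is essentially Kapranov's \cite[Thm.~2.1.2]{Kap}; with the characteristic-zero assumption and standard Koszul sign conventions, the passage to the graded and differential setting is a formal if tedious transcription.
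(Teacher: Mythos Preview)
Your construction is correct and follows essentially the same approach as the paper: an inductive filtration by bracket length, with the Leibniz relation governing the $A$-module structure on new brackets and the anchor extended via commutators in $\op{Der}(A)$. The only presentational difference is that the paper starts from the free graded Lie $k$-algebra $\op{Lie}^{\leq k}(M)$ (so that graded Jacobi is built in from the outset) and then quotients each filtration step by the relations \eqref{rel1}--\eqref{rel2}, whereas you generate brackets step by step and impose Jacobi at the end; also, the paper \emph{defines} the $A$-action on a bracket by $a\cdot[x,y]:=(-1)^{|a||x|}([x,ay]-\Gamma(x)(a)\cdot y)$ rather than imposing Leibniz as a relation on an a priori free $A$-module. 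These are equivalent packagings of the same object, and the universal property verification is the same in both cases. One small correction: your concern that the Leibniz quotient might ``collapse previously distinct elements of $L^{(n)}$'' is not actually an obstacle to the universal property---even if the filtration maps were not injective, well-definedness of the anchor, differential, and the universal arrow is all that is needed, and you have already indicated why those hold.
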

\begin{proof}

Given a (differential) graded vector space $V$, the free (d)g Lie $k$-algebra over $V$, denoted by $\op{Lie}(V)$,  has a natural weight grading
$$\op{Lie}(V)=\bigoplus_{k\geq 1}\op{Lie}^k(V),$$
where $\op{Lie}^k(V)$ is spanned by the brackets involving exactly $k$ elements of $V$.

Consider now an  anchored (d)g module $(A,M)$. If the anchor is trivial, the free Lie-Rinehart pair $(A,\op{LR}_A(M))$ is the free (d)g Lie $A$-algebra over $A$, hence naturally graded. In general, $\op{LR}_A(M)$ is the union of a filtered anchored (d)g $A$-module
\begin{equation}\label{filt1}\op{LR}_A^{\leq 1}(M)\subset \op{LR}_A^{\leq 2}(M)\subset\ldots \end{equation}
defined recursively starting from $\op{LR}_A^{\leq 1}(M)=M$, such that for each $k\geq 1$, $\op{LR}_A^{\leq 2}(M)$ is, as a (d)g vector space, a quotient of
$\op{Lie}^{\leq k}(M)=\bigoplus_{l=1\ldots k}\op{Lie}^l(M).$
Denote by $q_k$ the quotient map.

Suppose that the anchored $A$-module $\op{LR}_A^{\leq k}(M)$ has been defined for some $k$.
$\op{LR}_A^{\leq k+1}(M)$ is the quotient of $\op{Lie}^{\leq k+1}(M)$ by the following relations:
\begin{equation}\label{rel1}
[x,r]=0,\hspace{5pt}x\in \op{LR}^{\leq l}(M),r\in\op{Ker}(q_m),l+m=k+1;
\end{equation}
\begin{equation}\label{rel2}
[ax,y]-(-1)^{|a||x|}[x,ay]=(-1)^{(|a|+|x|)|y|}\Gamma(y)(a)\cdot x-(-1)^{|a||x|}\Gamma(x)(a)\cdot y,
\end{equation}
for $x\in\op{LR}^{\leq l}(M)$, $y\in\op{LR}^{\leq m}(M)$,  and  $l+m=k+1.$
Anchor is extended by
$$\Gamma([x,y])=[\Gamma(x),\Gamma(y)], \hspace{5pt}x\in\op{LR}^{\leq l}(M),y\in\op{LR}^{\leq m}(M), l+m=k+1,$$
and the $A$-module structure by
$$a\cdot[x,y]=(-1)^{|a||x|}([x,ay]-\Gamma(x)(a)\cdot y), \hspace{5pt}a\in A,x\in\op{LR}^{\leq l}(M),y\in\op{LR}^{\leq m}(M).$$

A map of (d)g anchored modules $(f_0,f):(A,M)\to (B,N)$ induces the map of (d)g LR pairs $(f_0,\op{LR}(f)):(A,\op{LR}_A(M))\to (B,\op{LR}_B(N))$, with $\op{LR}(f)$ defined recursively by it restrictions (which are morphisms of anchored modules)
$$\op{LR}^{\leq k}(f):\op{LR}_A^{\leq k}(M)\to A\otimes_B\op{LR}_B^{\leq k}(N),$$ as follows
\begin{itemize}
\item $\op{LR}^{\leq 1}(f)=f$;
\item Assuming $\op{LR}^{\leq k}(f)$ is defined for some $k$,  $\op{LR}^{\leq k+1}(f)$ is defined by the Equation \ref{LRmor}, for $m\in\op{LR}^{\leq p}(M),$ $m' \in\op{LR}^{\leq q}(M),$ with $p+q=k+1$.
\end{itemize}
\end{proof}

Final observation in the present section is that dg LR pairs can be pulled back just as their classical counterparts.

Given a dg LR pair $(A,M)$, and a morphism $f_0:A\to B$ of cdga-s, pullback of $(A,M)$ along $f_0$ is denoted by $(B,f_0^!M)$. As a dg $B$-module, $f_0^!M$ is equal to the pullback $$\op{Der}(B,B)\times_{\op{Der}(A,B)}B\otimes_AM.$$ Anchor is the projection to $\op{Der}(B,B)$, and the bracket is given by
\begin{equation*}
\begin{split}[(\rho, &\sum_i b_i\otimes m_i), (\rho', \sum_j c_j\otimes n_j)]\\=&([\rho,\rho'],\sum_{i,j}(-1)^{|m_i||c_j|}b_ic_j\otimes[m_i,n_j]+\sum_j\rho(c_j)\otimes n_j-(-1)^{|\rho'|(|b_i|+|m_i|)}\sum_i\rho'(b_i)\otimes m_i).\end{split}\end{equation*}
Denote by $$p:\op{Der}(B,B)\times_{\op{Der}(A,B)}B\otimes_AM\to B\otimes_A M$$
the canonical map. Then
$$(f_0,p):(B,f_0^!M)\to (A,M)$$ is a morphism of dg LR pairs

We state the factorization property without the proof. It is a special case of the Proposition \ref{fact}.
\begin{prop}\label{fact1}
Given a morphism of dg LR pairs $(f_0,f):(B,M)\to (A,N)$ there exists a unique morphism $(\op{Id}_{B},\overline{f}):(B,M)\to (B,f_0^!N)$ for which $$(f_0,f)=(f_0,p)\circ (\op{Id}_{B},\overline{f}).$$
\end{prop}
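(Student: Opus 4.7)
The plan is to define $\bar f$ explicitly as the pairing of the anchor of $M$ with $f$, verify that this lands in the fibre product $f_0^!N$, check $B$-linearity and bracket compatibility, and finally argue uniqueness from the defining properties of the projections out of the fibre product.

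More concretely, first I would set $\bar f(m):=(\Gamma_M(m),\,f(m))\in \op{Der}(B,B)\times B\otimes_A N$ for $m\in M$, where $\Gamma_M$ is the anchor of $(B,M)$. To see that $\bar f(m)$ actually lies in the pullback $f_0^!N=\op{Der}(B,B)\times_{\op{Der}(A,B)}B\otimes_A N$, I would invoke exactly the commutative square in the definition of a morphism of anchored modules: the composition $M\xrightarrow{\Gamma_M}\op{Der}(B,B)\to\op{Der}(A,B)$, $m\mapsto \Gamma_M(m)\circ f_0$, equals the composition $M\xrightarrow{f} B\otimes_A N\to\op{Der}(A,B)$, $b\otimes n\mapsto b\cdot f_0\circ\Gamma_N(n)$, since $(f_0,f)$ is a morphism of anchored modules. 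Hence both components of $\bar f(m)$ have the same image in $\op{Der}(A,B)$.

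Next I would verify that $(\op{Id}_B,\bar f)$ is a morphism of dg LR pairs $(B,M)\to(B,f_0^!N)$. $B$-linearity and compatibility with the differential follow componentwise from the corresponding properties of $\Gamma_M$ and $f$. Anchor compatibility is automatic because the anchor of $f_0^!N$ is projection to $\op{Der}(B,B)$, whose composition with $\bar f$ gives back $\Gamma_M$. The only nontrivial point is bracket compatibility: for $m,m'\in M$ with $f(m)=\sum_i b_i\otimes m_i$ and $f(m')=\sum_j c_j\otimes n_j$, the formula for the bracket in $f_0^!N$ produces, in the second component, precisely the right-hand side of Equation \ref{LRmor} (with $\Gamma_M$ in place of the anchor evaluations), while the first component reduces to $[\Gamma_M(m),\Gamma_M(m')]=\Gamma_M([m,m'])$ because $\Gamma_M$ is a morphism of dg Lie algebras. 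Thus $\bar f([m,m'])=[\bar f(m),\bar f(m')]$, using exactly that $(f_0,f)$ already satisfies (\ref{LRmor}).

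Finally, the factorization $(f_0,f)=(f_0,p)\circ(\op{Id}_B,\bar f)$ is immediate from $p\circ\bar f=f$ and the formula for composition of LR morphisms. For uniqueness, suppose $(\op{Id}_B,\bar g):(B,M)\to(B,f_0^!N)$ is any factorization. Post-composition with $p$ pins down the $B\otimes_A N$-component of $\bar g$ to be $f$, and the anchor compatibility of $(\op{Id}_B,\bar g)$, together with the fact that the anchor on $f_0^!N$ is projection to $\op{Der}(B,B)$, forces the first component of $\bar g$ to be $\Gamma_M$. Hence $\bar g=\bar f$. The only step that requires any genuine computation is the bracket compatibility, but as indicated this is just a matter of comparing (\ref{LRmor}) with the explicit bracket on $f_0^!N$; I do not anticipate a real obstacle.
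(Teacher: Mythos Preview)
Your proposal is correct and follows essentially the same route as the paper. The paper does not prove Proposition~\ref{fact1} directly but defers to the more general Proposition~\ref{fact} (for SH LR pairs with linear morphisms), whose proof defines $\overline{f}$ via the universal property of the pullback $\op{Der}(B,B)\times_{\op{Der}(A,B)}B\otimes_A N$ applied to the pair $(\Gamma_M,f)$---exactly your construction specialized to the strict case---and then leaves bracket compatibility to direct computation, which you have spelled out.
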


\section{The category of strong homotopy Lie-Rinehart pairs}\label{two}
In the present section we introduce the notions of SH LR pairs and SH morphisms, and we prove certain preliminary results used throughout the paper. We begin by recalling the relevant constructions concerning $L_\infty$-algebras.

Given an $L_\infty$-algebra $L$ over $k$, denote by $C_k(L)$ its reduced Chevalley-Eilenberg coalgebra -- kernel of the CE coalgebra's counit. Explicitly, it is the reduced graded-symmetric coalgebra $\op{Sym}_k^{>0}(L[1])$ for the coproduct
$$\overline{\Delta}(l_1\odot\cdots\odot l_n)=\sum_{\substack{k=1,\ldots n-1\\ \sigma\in \op{Sh}(k,n-k)}}(-1)^{|l_1,\ldots,l_n|_\sigma}(l_{\sigma(1)}\odot\cdots\odot l_{\sigma(k)})\otimes (l_{\sigma(k+1)}\odot\cdots\odot l_{\sigma(n)}),$$ together with a differential cogenerated (in the sense of \cite[Corollary 11.5.9]{Manetti}) by the negative d\'{e}calage of $n$-ary brackets
\begin{equation}\label{infty_shift}D_{C_k(L)}^n(l_1\odot\cdots\odot l_n)=-(-1)^{\sum_{i=1}^{n}(n-i)|l_i|}s^{-1}[s l_1,\ldots,s l_n]_n,\hspace{10pt}n\in\mathbb{N}_{>0}.\end{equation} 

$L_\infty$ structure is fully encoded within the Chevalley-Eilenberg differential. In fact, given a graded vector space $V$, differential on the graded symmetric coalgebra $\op{Sym}_k(V[1])$ is commonly referred to as an $L_\infty[1]$-algebra structure on $V[1]$. Equation \ref{infty_shift} provides 1-1 correspondence between $L_{\infty}$-structures on $V$, and $L_{\infty}[1]$-structures on $V[1]$, assigning to a given $L_{\infty}$-algebra its Chevalley-Eilenberg coalgebra. $L_\infty[1]$- morphisms, denoted by $V[1] \rightsquigarrow W[1]$,  are commonly defined either as morphisms between the respective Chevalley-Eilenberg coalgebras $F:\op{Sym}_k(V[1])\to \op{Sym}_k(W[1])$, or as their cogenerators
$(f^n:\op{Sym}^n_k(V[1])\to W[1])_{n\geq 1}$. Explicitly, \begin{equation}\label{extension}F(m_1\odot\cdots\odot m_n)=\sum_{\substack{\substack{r=1,\ldots n\\1\leq n_1<\ldots<n_r\leq n\\ \sigma\in\op{Sh}(n_1,\ldots,n_r)}}}(-1)^{|v_1,\ldots, v_n|_\sigma}f^{n_1}(v_{\sigma(1)\ldots \sigma(n_1)})\odot\cdots\odot f^{n_r}(v_{\sigma(n-n_r+1)\ldots\sigma(n)}).\end{equation} A map $(f^n)_n$ is called an infinity quasi-isomorphism if $f^1$ is a quasi-isomorphism. Decalage of an $L_\infty[1]$ morphism is sometimes referred to as an $L_\infty$-morphism. In particular, given a morphisms $f:L\to L'$ of dg Lie algebras, 
$$\op{d\'{e}c}^{-1}(f):L[1]\to L'[1],\hspace{10pt} l\mapsto s^{-1} f(s l)$$  (together with zero maps $\op{Sym}^k (L[1])\to L'[1]$ for $k>1$) is an $L_{\infty}[1]$ morphism. 

If $V$ is finite-dimensional, $L_\infty[1]$-algebra structure on $V$ is dually encoded by the Chevalley-Eilenberg algebra --
a formal power series $$\widehat{\op{Sym}}_k(V^*)=\varprojlim_{n}\op{Sym}_k(V^*)/\op{Sym}^{>n}_k(V^*),$$ together with a continuous square-zero differential. Equivalently, it is the pro-object $(\op{Sym}_k(V^*)/\op{Sym}^{>n}_k(V^*))_n$ in the category of Artinian graded algebras, still denoted by $\widehat{\op{Sym}}_k(V^*)$, together with a square-zero differential (Appendix \ref{Ap}).
 Later formulation immediately generalizes to infinite-dimensional graded vector spaces: A vector space $V$ is a direct limit of its finite dimensional subspaces
$V=\varinjlim_i V_i$. With this, $L_\infty[1]$-algebra structure on $V$ is dualized to a differential on the pro-Artinian graded $k$-algebra 
$$\widehat{\op{Sym}}_k(V^*):=(\op{Sym}_k(V^*_i)/\op{Sym}^{>n}_k(V^*_i))_{i,n},$$ endowing it with the structure of a pro-Artinian dg $k$-algebra.  In \cite{Pri}, the category of  pro-Artinian dg $k$-algebras is endowed with a model structure such that the above duality provides an equivalence between the category of $L_\infty$-algebras, and the opposite to the category of cofibrant pro-Artinian dg $k$-algebras.

Contrary to $L{_\infty}[1]$ algebra structures, a SH LR[1] (or $LR_\infty[1]$) structure on $(A,M)$ is not in general determined by a differential on the graded coalgebra  $\op{Sym}_A(M)$. Still, it is determined by a differential on the dual graded pro-algebra, as long as $M$ is a flat graded $A$-module.  Namely, by Lazard's theorem, a graded $A$-module $M$ is flat if and only if it is the colimit of a directed system of finite free graded $A$-modules $M=\varinjlim_{ i}M_i$. With this, a SH LR[1] structure is equivalently encoded by a square-zero differential on the pro-finite $A$-extension 
$$\widehat{\op{Sym}}_A(M^*):=( \op{Sym}_A(M^*_i)/\op{Sym}^{>n}_A(M^*_i))_{i,n},$$ 
which restricts to the kernel of the augmentation $\widehat{\op{Sym}}_A(M^*)\to A$ (\cite[Section 5]{P}, Appendix \ref{Ap}). Its weight $k-1$ component restricted respectively to $A$ and $M^*$ yields a degree one derivation
$\sigma^{k-1}:A\to\op{Sym}_A^{k-1}(M)^*$, and a degree one morphism of pro-graded-vector spaces $d^k:M^*\to \op{Sym}_A^{k}(M)^*$
which satisfies a certain Leibniz rule with respect to the $A$-module structure and $\sigma^{k-1}$. Such pair $(d^k,\sigma^k)$ is dually a weight $k-1$ \emph{multiderivation} on $M$: a pair $(D^{k},\rho^{k-1})$, where $\rho^{k-1}:\op{Sym}_A^{k-1}(M)\to \op{Der}(A)$ is a degree one morphism of graded $A$-modules, and $D^k:\op{Sym}_k^{k}(M)\to M$ a degree one morphism of graded vector spaces such that
\begin{equation*}D^k(m_1\odot\cdots\odot a\cdot m_k)=(-1)^{|a|(1+|m_1|+\ldots+|m_k|)} a\cdot D^k(m_1\odot\cdots\odot m_k)+\rho^{k-1}(m_1\odot\cdots\odot m_{k-1})(a)\cdot m_k.\end{equation*}
 In particular,  weight zero component of the Chevalley-Eilenberg differential determines differentials on $A$ and $M$ such that $A$ is a dg algebra, and $M$ is a dg $A$-module.

Finally, the differential on $\widehat{\op{Sym}}_A(M^*)$ squares to zero if and only if the family of multiderivations $(D,\sigma):=(D_k,\sigma_k)_{k\in\mathbb{N}_{>0}}$ determines an SH LR[1] structure on $M$, defined as follows: 
\begin{defi}
Let $A\in {\tt cdga}(k),$ and  $M\in{\tt Mod}(A)$. The pair $(A,M)$, together with a family of multiderivations $(D,\sigma):=(D^k,\rho^{k-1})_{k\in\mathbb{N}_{>0}}$, such that
\begin{enumerate}
\item $D^1$ is equal to the differential on $M$, and $\rho^0$ is equal to the differential on $A$;
\item $D=(D^k)_{k\in\mathbb{N}_{>0}}$ is an $L_\infty[1]$ structure on $M$; and
\item  the Maurer-Cartan equation $\rho\circ D+\frac{1}{2}[\rho,\rho]=0$, detailed within Equation \ref{MC}, is satisfied;
\end{enumerate}
is called a SH LR[1]-pair.
\end{defi}

Following characterization of SH LR[1] structures is useful:
\begin{prop}
For $A\in {\tt cdga}(k)$ and $M\in{\tt Mod}(A)$, a SH LR[1] structure on $(A,M)$ is equivalently an $L_\infty[1]$-algebra structure on $M$ whose unary bracket is the differential on $M$, together with a morphism of $L_\infty[1]$-algebras $\Gamma:M\rightsquigarrow\op{Der}(A)[1]$ called anchor, such that 

\begin{enumerate}
\item $\Gamma^n:\op{Sym}^n_k(M)\to \op{Der}(A)[1]$ is $A$-multilinear\footnote{For a graded $A$-module $N$, $A$-module structure on $N[1]$ is given by $a\cdot sn=(-1)^{|a|}s(a\cdot n)$} for all $n\in\mathbb{N}_{>0}$; and
\item the failure of $L_\infty[1]$ structure on $M$ to be to be $A$-multilinear is governed by the graded Leibniz rule
\begin{equation*}D^k(m_1\odot\cdots\odot a\cdot m_k)=(-1)^{|a|(1+|m_1|+\ldots+|m_k|)} a\cdot D^k(m_1\odot\cdots\odot m_k)+s\Gamma^{k-1}(m_1\odot\cdots\odot m_{k-1})(a)\cdot m_k.\end{equation*}
\end{enumerate}
\end{prop}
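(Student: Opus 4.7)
The strategy is to recognize that the proposition is a repackaging of the SH LR[1] structure into more conceptual components, and then to verify each match term-by-term. The $L_\infty[1]$-algebra structure on $M$ demanded by the proposition is identified with the family $(D^k)_{k\ge 1}$ from condition (2) of the definition; condition (1) supplies $D^1=d_M$ as the required unary bracket. The anchor components $\Gamma^n:\op{Sym}_k^n(M)\to\op{Der}(A)[1]$ are extracted from the $\rho^n$ by suspension, the convention $\rho^n = s\circ\Gamma^n$ being the one dictated by the explicit form of the Leibniz rule stated in condition (2) of the proposition. Under this dictionary, the $A$-multilinearity of $\Gamma^n$ is a direct transcription of the requirement that $\rho^n$ be a morphism of $A$-modules (part of the definition of a multiderivation), and the Leibniz rule in (2) of the proposition is precisely the multiderivation Leibniz rule.

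What remains, and constitutes the content of the proof, is to show that the Maurer-Cartan equation $\rho\circ D+\tfrac{1}{2}[\rho,\rho]=0$ of condition (3), expanded in Equation \ref{MC}, is equivalent to $\Gamma$ being an $L_\infty[1]$-morphism into $\op{Der}(A)[1]$. The cleanest route is to observe that $\op{Der}(A)[1]$ is the $L_\infty[1]$-algebra associated to the dg Lie algebra $\op{Der}(A)$ (so only unary and binary brackets are nonzero), and that in this case $L_\infty[1]$-morphisms $\Gamma:M\rightsquigarrow\op{Der}(A)[1]$ correspond to Maurer-Cartan elements in the convolution dg Lie algebra $\bigoplus_{n\ge 0}\op{Hom}_A\big(\op{Sym}_A^n(M),\op{Der}(A)\big)[1]$, whose differential is induced by $D$ on the source (using the $A$-multilinearity just established) and whose bracket is induced by $[-,-]_{\op{Der}(A)}$. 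Expanding the Maurer-Cartan condition in this convolution algebra on a decomposable element $m_1\odot\cdots\odot m_n$ reproduces precisely the weight-$n$ component of $\rho\circ D+\tfrac{1}{2}[\rho,\rho]$.

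The reverse direction is analogous: starting from $(M,D,\Gamma)$ as in the proposition, one defines $\rho^n := s\circ\Gamma^n$ together with $\rho^0 := d_A$, and checks the three axioms of the definition in turn. The main obstacle throughout is the bookkeeping of signs and suspension conventions — in particular, reconciling the Koszul signs from Equation \ref{infty_shift} (the identification of $L_\infty$- and $L_\infty[1]$-structures) with those arising in Equation \ref{MC}; once these are aligned, the matching of weight-$n$ components is essentially term-by-term and follows from the general coherence of the Chevalley-Eilenberg dualization recalled just before the proposition, together with Appendix \ref{Ap} and \cite[Section 5]{P}.
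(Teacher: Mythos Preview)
Your approach is essentially the same as the paper's: set up the dictionary $\rho^n = s\Gamma^n$ with $\rho^0 = d_A$, match the multiderivation axioms with items (1) and (2) of the proposition, and then identify the Maurer--Cartan equation with the $L_\infty[1]$-morphism condition. The paper carries out this last step by the explicit expansion in Equation~\ref{MC} and then cites \cite[Proposition 12.2.3]{Manetti}; your convolution-algebra phrasing is the conceptual packaging of the same computation.

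One point needs correction. The convolution dg Lie algebra in which $\rho$ is a Maurer--Cartan element is $\bigoplus_{n\ge 1}\op{Hom}_k\!\big(\op{Sym}_k^n(M),\op{Der}(A)\big)$, over $k$ and not over $A$. The coderivation $D$ on $\op{Sym}_k(M)$ does not descend to $\op{Sym}_A(M)$ --- this is exactly the Leibniz defect recorded in item (2) --- so there is no well-defined differential ``induced by $D$ on the source'' on $\op{Hom}_A(\op{Sym}_A^n(M),\op{Der}(A))$. The $A$-multilinearity of each $\rho^n$ is a property of the particular MC element, not a feature of the ambient convolution algebra. Once you work over $k$, the standard fact that MC elements in the convolution algebra correspond to $L_\infty[1]$-morphisms into the dg Lie algebra $\op{Der}(A)$ gives exactly what you need, and the $A$-multilinearity is imposed separately as condition (1). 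With this fix your argument matches the paper's.
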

\begin{rem}\label{anch}
By (1), the anchor endows $\op{Sym}_A(M)[-1]$ with the structure of a graded anchored $A$-module.
\end{rem}
\begin{proof}
For all $n\in\mathbb{N}_{>0}$, define degree one $A$-multilinear maps \begin{equation}\label{SHLRprop}\rho^n:\op{Sym}^n_k(M)\to \op{Der}(A),\hspace{10pt} (m_1\odot\cdots\odot m_n)\mapsto s\Gamma^n (m_1\odot\cdots\odot m_n)\end{equation}
 Denoting by $\rho^{0}$ the differential on $A$, conditions (1) and (2) are clearly satisfied if and only if $(D^n,\rho^{n-1})$ is a weight $n$-multiderivation for all $n\in\mathbb{N}$. It remains to show the Maurer-Cartan equation 
$\rho\circ D+\frac{1}{2}[\rho,\rho]=0$ is satisfied if and only if $(\Gamma^n)_{n\in\mathbb{N}_{>0}}$ is an $L_{\infty}$-morphism. This is done by direct computation (for the Koszul sines, see notation):
\begin{align}\label{MC}
\rho\circ D+&\frac{1}{2}[\rho,\rho](m_1\odot\cdots\odot m_n)\\
=&\sum_{\substack{k=1,\ldots n\\ \sigma\in \op{Sh}(k,n-k)}}(-1)^{|m_1,\ldots,m_n|_\sigma}\rho^{n-k+1}(D^k(m_{\sigma(1)\ldots\sigma(k)})\odot m_{\sigma(k+1)\ldots\sigma(n)})\nonumber\\
+&\frac{1}{2}\sum_{\substack{k=0,\ldots,n\\ \sigma\in \op{Sh}(k,n-k)}}(-1)^{|m_1,\ldots,m_n|_\sigma+|m_{\sigma(1)\ldots\sigma(k)}|}[\rho^k(m_{\sigma(1)\ldots\sigma(k)}),\rho^{n-k}(m_{\sigma(k+1)\ldots\sigma(n)})]\nonumber\\
=&s\left(\sum_{\substack{k=1,\ldots n\\ \sigma\in \op{Sh}(k,n-k)}}(-1)^{|m_1,\ldots,m_n|_\sigma}\Gamma^{n-k+1}(D^k(m_{\sigma(1)\ldots\sigma(k)})\odot m_{\sigma(k+1)\ldots\sigma(n)})\right.\nonumber\\
-&\left.\frac{1}{2}\sum_{\substack{k=1,\ldots,n-1\\ \sigma\in \op{Sh}(k,n-k)}}(-1)^{|m_1,\ldots,m_n|_\sigma}D^2(\Gamma^k(m_{\sigma(1)\ldots\sigma(k)})\odot \Gamma^{n-k}(m_{\sigma(k+1)\ldots\sigma(n)}))\right)\nonumber\\
+&d_A\circ \rho^n(m_{1\ldots n})-(-1)^{|\rho^n(m_{1\ldots n})|}\rho^n(m_{1\ldots n})\circ d_A.\nonumber
\end{align}
As
\begin{equation*}d_A\circ \rho^n(m_{1\ldots n})-(-1)^{|\rho^n(m_{1\ldots n})|}\rho^n(m_{1\ldots n})\circ d_A=d_{\op{Der}(A)}\rho^n(m_{1\ldots n})=s(-d_{\op{Der}(A)[1]}\Gamma^n(m_{1\ldots n}))
,\end{equation*} 
by \cite[Proposition 12.2.3]{Manetti}, the right hand side in \ref{MC} vanishes if and only if $\Gamma$ is an $L_\infty[1]$-morphism.
\end{proof}
\begin{rem}There is another, significantly more general notion of a SH LR pair, coming from the fact that (colored) LR operad is Koszul \cite{VDL}. In that sense, SH LR pair is a $C_\infty$-algebra $A$ together with an $L_\infty$-algebra $M$, such that $M$ is an infinity module over $A$ and vice-versa, with all the compatibility conditions up to homotopy. However, such generality is not essential for the goals of the present paper. Another difference to the fully operadic approach is that the latter gives a different (and less common) notion of morphisms. The two agree when $f_0=\op{id}_A$; for details, see Lie algebroid morphisms and comorphisms in \cite{Mac}.\end{rem}

Notion of a SH morphism is a vague concept to date, with various generalities used for various needs. When dealing graded-flat modules, one can reason in terms of Chevalley-Eilenberg algebras, and define \emph{general} morphisms of SH LR[1] pairs $(A,M)\to (B,N)$ as pairs $(f_0,f^*)$, with $f_0:B\to A$  a morphism of cdga-s, and $f^*:(\widehat{\op{Sym}}_B(N^*),d_{\op{CE}})\to(\widehat{\op{Sym}}_A(M^*),d_{\op{CE}})$ a morphism of differential graded pro-algebras, such that the diagram
\begin{equation}\label{morph}
\begin{tikzcd}
\widehat{\op{Sym}}_B(N^*)\arrow[r,"f^*"]\arrow[d]&\arrow[d]\widehat{\op{Sym}}_A(M^*)\\
B\arrow[r,"f_0"]&A
\end{tikzcd}
\end{equation}
commutes. Weight zero component of $f^*$ is dual to a morphism of $A$-modules $f^1:M\to A\otimes_B N$. A SH morphism $(f_0,f^*)$ is a weak equivalence (or SH quasi-isomorphism) if $f_0$ and $f^1$ are both quasi-isomorphisms. Observe that the notion of weak equivalence is homotopy coherent only if $A\otimes_B N$ is of correct homotopy type.

Unlike general morphisms,  \emph{linear} SH maps are defined in terms of SH LR[1] structure:

\begin{defi}\label{defLin}
Let $(A,M)$, $(B,N)$ be SH LR[1] pairs, $f_0:B\to A$ a morphism of cdga-s, and $$f=(f^n:\op{Sym}_A^n(M)\to A\otimes_BN)_{n>0}$$ a family of morphisms of graded $A$-modules. We say that $(f_0,f):(A,M)\rightsquigarrow(B,N)$ is a \emph{linear SH morphism} if
\begin{enumerate}
\item
the extension of $f$ to a map of graded coalgebras
$$F:\op{Sym}_A(M)\to \op{Sym}_A(A\otimes_BN)=A\otimes_B\op{Sym}_B(N)$$ given by Equation \ref{extension} is, under a $(-1)$-shift, a morphism of anchored modules (Remark \ref{anch}); and
\item
denoting $f^n(m_1\odot\cdots\odot m_n)=\sum_{\alpha}a^{m_{1\ldots n}}_{\alpha}\otimes n^{m_{1\ldots n}}_{\alpha}$,
\begin{equation*}
\begin{split}
&\sum_{\substack{k=1\ldots n\\ \sigma\in \op{Sh}(k,n-k)}}(-1)^{|m_1,\ldots,m_n|_\sigma} f^n(D^k(m_{\sigma(1)\ldots \sigma(k)})\odot m_{\sigma(k+1)\ldots \sigma(n)})\\=&\sum_{\substack{k=0,\ldots,n\\ \sigma\in\op{Sh}(k,n-k)}}\sum_{\alpha}(-1)^{|m_1,\ldots,m_n|_\sigma} \rho^k(m_{\sigma(1)\ldots \sigma(k)})(a_\alpha^{m_{\sigma(k+1)\ldots\sigma(n)}})\otimes n_\alpha^{m_{\sigma(k+1)\ldots \sigma(n)}}\\
+&\sum_{\substack{r=1,\ldots n\\0=n_0<\ldots<n_r= n\\ \sigma\in\op{Sh}(n_1-n_0,\ldots,n_r-n_{r-1})}}\sum_{\alpha_1,\ldots\alpha_r}(-1)^{|m_1,\ldots,m_n|_\sigma+\sum_{i=0}^{r-1} |a_{\alpha_{i+1}}^{m_{\sigma(n_{i}+1)\ldots \sigma (n_{i+1})}}|(1+ |n_{\alpha_1}^{m_{\sigma(1)\ldots \sigma (n_{1})}}|+\ldots+|n_{\alpha_{i}}^{m_{\sigma(n_{i-1}+1)\ldots \sigma (n_{i})}}|)}\\&
\frac{1}{r!}a_{\alpha_1}^{m_{\sigma(1)\ldots \sigma(n_1)}}\cdots a_{\alpha_r}^{m_{\sigma(n_{r-1}+1)\ldots\sigma(n_r)}}D^r(n_{\alpha_1}^{m_{\sigma(1)\ldots \sigma(n_1)}}\odot\cdots\odot n_{\alpha_r}^{m_{\sigma(n_{r-1}+1)\ldots\sigma(n_r)}}).
\end{split}
\end{equation*}
\end{enumerate}
Morphism $f$ is called \emph{strict} if $f^i=0$ for $i>1$.
\end{defi}
In particular, $f^1:M\to A\otimes_B N$ is a morphism of dg $A$-modules.  Assuming $A\otimes_B N$ is of the correct homotopy type, a linear map $(f_0,f)$ is a {SH quasi-isomorphism} if $f_0$ and $f^1$ are both quasi-isomorphisms.

\begin{rem}\label{rem11}For a linear SH morphism $$(\op{id}_A,f):(A,M)\rightsquigarrow(A,N),$$ the condition  $(2)$ is equivalent to the requirement that $f$ be a morphism of $L_\infty$ algebras. Indeed, all the coefficients $a^{m_{1\ldots n}}_{\alpha}$ can be set to $1$, and the condition two yields the characterization of $L_\infty$ morphisms given by \cite[Proposition 12.2.3.]{Manetti}.
\end{rem}

Equivalently, linear SH morphisms arel SH morphisms $(f_0,f^*)$ for which $f^*(B)\subseteq A$. Proof is a straightforward but tedious exercise in differential graded pro-setting, similar to that of \cite[Theorem 23]{P}.

\begin{prop}\label{propLin}
Let $(A,M)$ and $(B,N)$ be SH LR[1] pairs, with $M$ flat as a graded $A$-module, and $N$ flat as a graded $B$-module. Let  $f_0:B\to A$ be a morphism of cdga-s, and $f=(f^n:\op{Sym}_A^n(M)\to A\otimes_B(N))_{n>0}$ a family of graded $A$-module morphisms. Denote by $F:\op{Sym}_A(M)\to \op{Sym}_A(A\otimes_BN)$ the  graded coalgebra map cogenerated by $f$ (Equation \ref{extension}), and by
$$F^*:A\otimes_B\widehat{Sym}_B(N^*)=\widehat{Sym}_A(A\otimes_B N^*)\to\widehat{Sym}_A(M^*)$$
the morphism of pro-finite graded $A$-algebras dual to $F$.
$F^*$, identified via the base change adjunction \cite[Proposition 11]{P} to a morphism of graded pro-$A$-algebras
$\widehat{Sym}_BN^*\to\widehat{Sym}_AM^*$,
respects the Chevalley-Eilenberg differential if and only if $(f_0,f)$ is a linear SH morphism.
\end{prop}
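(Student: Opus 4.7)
The plan is to exploit the fact that the morphism $F^*$, viewed as a graded pro-$B$-algebra map to $\widehat{\op{Sym}}_A(M^*)$ (the latter being a $B$-algebra via $f_0$), is uniquely determined by its restrictions to the generating subspaces $B$ and $N^*$, and that compatibility with the Chevalley-Eilenberg differentials can be verified on the same two subspaces. Since $F^*$ is an algebra morphism and both CE differentials are continuous graded derivations, the degree-one map
$$\Theta:= d_{\op{CE}}^{(A,M)}\circ F^* - F^*\circ d_{\op{CE}}^{(B,N)}$$
is a continuous $F^*$-derivation, so $\Theta=0$ if and only if $\Theta|_B=0$ and $\Theta|_{N^*}=0$.

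First I would analyze $\Theta|_B$. Decomposed by weight on $\widehat{\op{Sym}}_A(M^*)$, the weight-zero component of $\Theta(b)=0$ reads $d_A\circ f_0=f_0\circ d_B$, which holds by hypothesis. For each $n\geq 1$, the weight-$n$ component, after invoking the pairings $\sigma^k(a)(m_{1\ldots k})=\rho^k(m_{1\ldots k})(a)$ for both $(A,M)$ and $(B,N)$ and expanding $F^*(\sigma^k_N(b))$ via multiplicativity of $F^*$, rewrites as an equation in $\op{Hom}_k(\op{Sym}_A^n(M),\op{Hom}(B,A))$. Evaluating against $m_{1\ldots n}$ and collecting the shuffles arising from weighted products of $F^*(n^*_i)$-components, the equation reduces precisely to the anchor-compatibility condition defining a morphism of (d)g anchored modules (Section \ref{one}), applied to $(f_0,F[-1])$ equipped with the anchored $A$-module structures of Remark \ref{anch}. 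This is condition (1) of Definition \ref{defLin}.

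Next I would analyze $\Theta|_{N^*}$. Dualizing to the coalgebra $\op{Sym}_A(M)$ and projecting to the cogenerators $A\otimes_B N$ of $A\otimes_B \op{Sym}_B(N)$, the weight-$n$ part of $\Theta(n^*)=0$ becomes an equation of $A$-module maps $\op{Sym}_A^n(M)\to A\otimes_B N$. The contribution $d_{\op{CE}}^{(A,M)}F^*(n^*)|_n$, using that CE is cogenerated by the multiderivations $(D^k,\rho^{k-1})$ on $(A,M)$ together with the decomposition \ref{extension} of $F$, unpacks to the left-hand side of condition (2) of Definition \ref{defLin}, namely a shuffle sum of terms $f^{n-k+1}(D^k(\cdots)\odot\cdots)$. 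The contribution $F^*(d_{\op{CE}}^{(B,N)}(n^*))|_n$, expanded by multiplicativity of $F^*$ and regrouped into ordered indexed sums, reproduces the right-hand side of (2): the $\rho^k$-corrections arise from the derivation action of $d_{\op{CE}}^{(A,M)}$ on the $A$-coefficients $a^{m_{\ldots}}_\alpha$ of $f^{n_i}$, while the $D^r_N$-contributions arise from the $L_\infty[1]$-brackets on $N$ after base change along $f_0$.

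The main obstacle will be combinatorial: tracking Koszul signs and unshuffle indices as $F^*$ is expanded through its pro-algebra structure, as symmetric tensors are dualized back to the coalgebra side, and as the resulting sums are regrouped into the ordered indexed form appearing in (2). I would model this on the argument of \cite[Theorem 23]{P}, which establishes the analogous equivalence for plain $L_\infty[1]$-morphisms without Lie-Rinehart anchor, adapted so as to carry the anchor components $\rho^{k-1}$ in parallel with the brackets $D^k$ throughout the bookkeeping.
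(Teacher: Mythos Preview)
Your proposal is correct and follows exactly the approach the paper indicates: the paper does not give a detailed proof of this proposition, stating only that it is ``a straightforward but tedious exercise in differential graded pro-setting, similar to that of \cite[Theorem 23]{P}.'' Your outline---reducing to the vanishing of the $F^*$-derivation $\Theta$ on the generators $B$ and $N^*$, identifying $\Theta|_B=0$ with condition (1) and $\Theta|_{N^*}=0$ with condition (2) of Definition~\ref{defLin}, and modeling the combinatorics on \cite[Theorem 23]{P}---is precisely what the paper has in mind.
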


Although pullbacks of general SH LR pairs are not well defined, linear SH morphisms whose target is a dg LR pair factor uniquely through the pullback.
\begin{prop}\label{fact}
Given a SH LR pair $(A,M)$,\footnote{i.e. a SH LR [1] pair $(A,M[1])$} a dg LR pair $(B,N)$, and a linear SH morphism $(f_0,f):(A,M)\rightsquigarrow (B,N)$, there exists a unique linear SH morphism $(\op{Id}_{A},\overline{f}):(A,M)\rightsquigarrow (A,f_0^!N)$ for which $$(f_0,f)=(f_0,p)\circ (\op{Id}_{A},\overline{f}).$$
\end{prop}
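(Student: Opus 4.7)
My approach is to exploit that the target $f_0^!N=\op{Der}(A,A)\times_{\op{Der}(B,A)}A\otimes_B N$ is a fibre product of $A$-modules, so each component $\overline f^n:\op{Sym}_A^n M\to f_0^!N$ is determined by its two projections and can be constructed componentwise.

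\emph{Uniqueness.} The factorization $(f_0,f)=(f_0,p)\circ(\op{Id}_A,\overline f)$ directly forces $p\circ\overline f^n=f^n$, which pins down the $A\otimes_B N$-component of each $\overline f^n$. For the $\op{Der}(A,A)$-component, I invoke condition (1) of Definition \ref{defLin}: the coalgebra extension $\overline F$ of $\overline f$ must be a morphism of anchored modules. Because the target $(A,f_0^!N)$ is a dg (not SH) LR pair, the anchor on $\op{Sym}_A(f_0^!N)[-1]$ is concentrated in weight one and is simply the projection to $\op{Der}(A,A)$. Comparing weight-$n$ components of the anchor-compatibility square therefore forces the $\op{Der}(A,A)$-component of $\overline f^n$ to coincide with $\rho^n_{(A,M)}=s\Gamma^n_{(A,M)}$ (cf.\ Equation \ref{SHLRprop}). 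So the only candidate is
$$\overline f^n(m_{1\ldots n})=\bigl(\rho^n(m_{1\ldots n}),\,f^n(m_{1\ldots n})\bigr).$$

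\emph{Existence.} I take this formula as the definition. First, the pair actually lies in the fibre product: writing $f^n(m_{1\ldots n})=\sum_\alpha a_\alpha\otimes n_\alpha$, what must be checked is $\rho^n(m_{1\ldots n})\circ f_0=\sum_\alpha a_\alpha\cdot f_0\circ\Gamma_N(n_\alpha)$, which is precisely the weight-$n$ anchor compatibility entailed by condition (1) of Definition \ref{defLin} for the given $(f_0,f)$. Graded $A$-multilinearity of $\overline f^n$ follows from that of $\Gamma^n$ (the SH LR axiom on $(A,M)$) and of $f^n$ (hypothesis). Condition (1) of Definition \ref{defLin} for $\overline f$ and the factorization identity both hold by construction.

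Condition (2) of Definition \ref{defLin} for $\overline f$ is verified by testing it separately in each factor of the pullback. Because the cdga part of $\overline f$ is $\op{Id}_A$, the coefficients $a_\alpha$ in the expansion of $\overline f^n$ can be taken to be $1$, so the ``anchor-on-coefficients'' summand on the RHS vanishes; moreover, only $D^1_{f_0^!N}$ and $D^2_{f_0^!N}$ contribute, since higher brackets on a dg LR pair are zero. Projecting the identity to $\op{Der}(A,A)$ collapses it to the Maurer-Cartan equation \ref{MC} for the anchor of $(A,M)$, which holds by the SH LR axioms. Projecting to $A\otimes_B N$ via $p$, and using $p\circ\overline f^n=f^n$ on the LHS together with the explicit pullback bracket formula on $f_0^!N$ recalled in Section \ref{one}, one recovers exactly the RHS of condition (2) for the original $(f_0,f)$: the $\rho(c_j)\otimes n_j$ and $-\rho'(b_i)\otimes m_i$ correction terms arising from the pullback bracket become precisely the anchor-on-coefficients summands of the original condition.

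\emph{Main obstacle.} The only delicate step is this second projection, where one must match signs and shuffle combinatorics between the Leibniz-type correction terms of the pullback bracket formula (evaluated on pairs $\overline f^{n_1}\odot\overline f^{n_2}$ whose first components are $\rho^{n_1}$ and $\rho^{n_2}$) and the anchor summands of condition (2) for $(f_0,f)$. The alignment is essentially what the pullback bracket on $f_0^!N$ was constructed to achieve --- namely, so that $p$ intertwines the bracket of $f_0^!N$ with the $A\otimes_B$ extension used in the definition of LR pair morphisms --- so the resulting check, while combinatorially heavy, is direct.
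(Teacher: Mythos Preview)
Your proof is correct and follows essentially the same approach as the paper: the paper defines $\overline f^k$ as the universal arrow into the pullback $f_0^!N[1]=\op{Der}(A)[1]\times_{\op{Der}(B,A)[1]}A\otimes_B N[1]$ with components $\Gamma^k$ and $f^k$, notes that property (1) is clear, and states that property (2) is ``verified by direct computation'' without spelling it out. Your argument supplies exactly that computation --- the projection of condition (2) onto the two factors of the pullback, identifying one with the Maurer--Cartan equation \ref{MC} and the other with condition (2) for $(f_0,f)$ via the explicit pullback bracket --- and also makes the uniqueness step more explicit than the paper does.
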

\begin{proof}
Morphisms $(\overline{f}^k:\op{Sym}_A^k(M[1])\to f_0^!N[1])_{k\in\mathbb{N}}$ are defined as universal arrows in the category of graded $A$-modules
\begin{center}
\begin{tikzcd} \op{Sym}_A^k(M[1]) \arrow[drr, bend left, "f^k"] \arrow[ddr, bend right, "\Gamma^k"] \arrow[dr, dotted, "{\overline{f}^k}" description] & & \\
 &{f_0}^!N[1] \arrow[r, "p"] \arrow[d] & A\otimes_B N[1]\arrow[d]\\ 
& \op{Der}(A,A)[1] \arrow[r] & \op{Der}(B,A)[1]. \end{tikzcd}
\end{center}
Property (1) of the Definition \ref{defLin} is clearly satisfied. Property (2) is verified by direct computation.
%
\end{proof}

\section{Equivalence of dg and SH LR pairs}\label{three}
Equivalence of dg Lie and $L_{\infty}$-algebras, which is a consequence of the the bar-cobar construction for the operad $Lie$ \cite[Chapter 11]{Valette}, is highly relevant to the present section. Let us begin with a brief recollection.

Let $L$ be an $L_{\infty}$ algebra, and let $C_k(L)$ be its reduced Chevalley Eilenberg coalgebra. Cobar complex of $C_k(L)$  is the free graded Lie algebra on $C_k(L)[-1]$, with the differential generated by
$$d_{\mathcal{L}_k(C_k(L))}(s x)=-s d_{C_k(L)}x-\frac{1}{2}\sum_i(-1)^{|x^{(1)}_i|}[(s x)_i^{(1)},(s x)_i^{(2)}],\hspace{5pt} x\in C_k(L),$$
where $\overline{\Delta}(x)=\sum_i x_i^{(1)}\otimes x_i^{(2)}$ is the reduced coproduct.
Later dg Lie algebra is denoted by $\mathcal{L}_k(C_k(L))$. 
Denote by ${\tt dgLie}(k)$ the category of dg Lie algebras, by ${\tt Lie}_{\infty}(k)$ the category of $L_\infty$-algebras with $L_\infty$-morphisms, and by $\imath$ the inclusion of the former category into the later.
As a consequence of the bar-cobar adjunction for Lie algebras, assignment $L\mapsto\mathcal{L}_k\circ C_k(L)$ is left adjoint to the inclusion
\begin{equation}\label{adjLie}\mathcal{L}_k\circ C_k:{\tt Lie}_{\infty}(k)\leftrightarrows  {\tt dgLie}(k):\imath.\end{equation}
Moreover, adjunction's unit is an infinity quasi-isomorphism of $L_\infty$ algebras $L\rightsquigarrow\mathcal{L}_k(C_k(L))$, and the adjunction's counit is a quasi-isomorphism of dg Lie algebras $\mathcal{L}_k(C_k(L))\to L$. Thus, the above adjunction yields an equivalence under the Dwyer-Kan localization. \bigskip 

Let now $(A,M)$ be a SH LR pair. In particular, $M$ is an $L_{\infty}$-algebra, and the anchor is an $L_{\infty}$-morphism $\Gamma:M\rightsquigarrow\op{Der}(A)$. Denote by $\eta_k$ and $\epsilon_k$ respectively the unit and counit of the adjunction \ref{adjLie}. Observe that $\Gamma$ factors as
\begin{equation}\label{diag1}
\begin{tikzcd}[column sep=large]
M\arrow[d, rightsquigarrow,"\eta_k"]\arrow[drr,rightsquigarrow,"\Gamma"] & &  \\
\mathcal{L}_kC_k(M)\arrow[r,swap,"\mathcal{L}_kC_k(\Gamma)"]& \mathcal{L}_kC_k(\op{Der(A)}) \arrow[r, swap,"\epsilon_k"] &\op{Der}(A)
\end{tikzcd}
\end{equation}
 Denote the horizontal composition by
$$\gamma:\mathcal{L}_kC_k(M)\to\op{Der}(A).$$ As a map of graded Lie algebras, $\gamma$ is uniquely determined by its restriction to the generators, explicitly by the morphism
$$\gamma_{\op{gen}}:C_k(M)[-1]\to\op{Der}(A),\hspace{5pt} s(m_1\odot\cdots m_n)\mapsto s\Gamma^n(m_1,\ldots,m_n)$$
of graded vector spaces.
Denote by $C_A(M)$ the graded-symmetric $A$-coalgebra $\op{Sym}_A(M[1])$, which is itself a quotient of $\op{Sym}_k(M[1])$ -- the underlying graded coalgebra of $C_k(M)$.  As $\Gamma^n$ are $A$-multilinear, the map $\gamma_{\op{gen}}$ factors through $C_A(M)[-1]$. Hence, $C_A(M)[-1]$ is a graded anchored $A$-module. $\mathcal{L}_AC_A(M):=LR(C_A(M)[-1])$ denotes the free LR algebra of  $C_A(M)[-1]$. Clearly, $\mathcal{L}_A(C_A(M))$ is a quotient of the free graded Lie algebra $\mathcal{L}_k(C_k(M))$. Although the differential in the Chevalley-Eilenberg coalgebra $C_k(M)$ doesn't pass to the quotient $C_A(M)$, it happens that the total differential on $\mathcal{L}_k(C_k(M))$ does pass to the quotient $\mathcal{L}_A(C_A(M))$:
\begin{lemma}\label{LtoLR}
Differential on $\mathcal{L}_k(C_k(M))$ passes to the quotient $\mathcal{L}_A(C_A(M))$.
\end{lemma}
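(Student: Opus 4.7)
The strategy is to verify that the cobar differential $d_{\mathcal{L}}$ on $\mathcal{L}_k C_k(M)$ preserves the kernel of the canonical surjection $q:\mathcal{L}_k C_k(M)\twoheadrightarrow \mathcal{L}_A C_A(M)$. Since $d_{\mathcal{L}}$ is a derivation of the free Lie bracket, and since $\mathcal{L}_A C_A(M)$ arises from the free graded Lie algebra on $C_A(M)[-1]$ by imposing the LR relations \ref{rel1}--\ref{rel2} (see the proof of the free-LR theorem), it suffices to check: (a) on generators $sx$, the composite $q\circ d_{\mathcal{L}}$ factors through the natural surjection $C_k(M)[-1]\twoheadrightarrow C_A(M)[-1]$; and (b) $d_{\mathcal{L}}$ sends the defining elements of \ref{rel1}--\ref{rel2} into the ideal they generate.

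For (a), I would fix $x\in C_k(M)$ and examine how the two summands of
\[ d_{\mathcal{L}}(sx)=-s\,d_{C_k(M)}x\;-\;\tfrac{1}{2}\sum_{i}(-1)^{|x_i^{(1)}|}[(sx)_i^{(1)},(sx)_i^{(2)}] \]
respond to moving a scalar $a\in A$ across an $\odot$-factor of $x$. The CE part $-s\,d_{C_k(M)}x$ fails to respect $A$-multilinearity precisely by the anchor correction of the SH LR$[1]$ Leibniz rule (item (2) of the proposition preceding Remark~\ref{anch}), leaving a residual term of the form $s\,(\Gamma^{\ast}(\ldots)(a)\cdot m_{\ast})$. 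The free-Lie bracket summand, expanded via the reduced coproduct $\overline{\Delta}$ and the extension formula \ref{extension}, produces differences of brackets $[(sx)^{(1)},(sx)^{(2)}]$ in which $a$ appears on one side or the other; under $q$ these differences are rewritten using \ref{rel2}, and the resulting anchor contribution turns out to be exactly minus the one produced by the CE part. This cancellation, which uses that the anchor on $C_A(M)[-1]$ is itself built from the $\Gamma^{\ast}$ via the factorisation of diagram \ref{diag1}, is what makes $q\circ d_{\mathcal{L}}$ descend to $C_A(M)[-1]$.

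For (b), once (a) is in hand, applying the Lie derivation $d_{\mathcal{L}}$ to a defining relation of \ref{rel2} decomposes into brackets of $d_{\mathcal{L}}$-images of sub-generators (controlled by (a)) together with terms in which $d_A$ acts on the anchor values; compatibility of the latter is exactly the Maurer--Cartan equation \ref{MC}, equivalently the condition that $\Gamma$ be an $L_\infty[1]$-morphism to $\op{Der}(A)[1]$. Relation \ref{rel1} is handled analogously, by inducting on the weight filtration \ref{filt1}.

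The main obstacle is the sign-and-combinatorics verification of the cancellation in (a): one must expand $\overline{\Delta}$ explicitly, propagate Koszul signs through the d\'{e}calage \ref{infty_shift} and through \ref{extension}, and sum over all positions at which $a$ may land in $x$. Conceptually the cancellation is a concrete shadow of the Koszul duality for the colored Lie--Rinehart operad \cite{VDL}: SH LR$[1]$ structures are Maurer--Cartan elements for the convolution Lie algebra of the Koszul-dual cooperad, and any such Maurer--Cartan element induces a square-zero differential on the free LR algebra of the Koszul-dual coalgebra; the present lemma is the direct incarnation of that general principle, with the base cdga $A$ playing the role of the scalar colour.
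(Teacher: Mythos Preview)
Your proposal is correct and follows essentially the same approach as the paper: part (a) is precisely the computation carried out explicitly in the paper's proof (establishing the $A$-module Leibniz rule $(p\circ d)(s(am_{1\ldots n}))=(-1)^{|a|}d_Aa\cdot p(sm_{1\ldots n})+a\cdot(p\circ d)(sm_{1\ldots n})$ via the cancellation you describe between anchor corrections from the SH Leibniz rule and from relation \ref{rel2}). Your part (b) makes explicit the role of the Maurer--Cartan equation \ref{MC}---equivalently that $\gamma=\epsilon_k\circ\mathcal{L}_kC_k(\Gamma)$ is a dg map---which the paper leaves implicit in its closing ``by induction, it is readily verified''.
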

\begin{proof}
Recall that, given a graded $A$-module $N$, the $A$-module structure on $N[-1]$ is defined by $as n=(-1)^{|a|}s(an)$. Denote by $p$ the projection $p:\mathcal{L}_k(C_k(M))\to\mathcal{L}_A(C_A(M))$.
Observe that 
\begin{equation*}
\begin{split}
&\frac{1}{2}\sum_{\substack{k=1,\ldots n\\ \sigma\in \op{Sh}(k,n-k)}}(-1)^{|m_1,\ldots m_n|_\sigma+m_{\sigma(1)\ldots\sigma(k)}}[sm_{\sigma(1)\ldots\sigma(k)},sm_{\sigma(k+1)\ldots\sigma(n)}]\\
=&\sum_{\substack{k=1,\ldots n\\ \sigma\in \op{Sh}(k,n-k), \sigma(1)=1}}(-1)^{|m_1,\ldots m_n|_\sigma+m_{\sigma(1)\ldots\sigma(k)}}[sm_{\sigma(1)\ldots\sigma(k)},sm_{\sigma(k+1)\ldots\sigma(n)}],
\end{split}
\end{equation*}
Below, $\tau$ denotes the permutation in hands. For instance, given $\sigma\in S_3$,  in $$(-1)^{|m_{1\ldots 3}|_{\tau}}m_{\sigma(2)}\odot m_{\sigma(1)}\odot am_{\sigma(3)},$$ $\tau:=\sigma\circ(1,2)$.    We get
\begin{align*}
(p\circ d)&s(am_{1}\odot m_2\odot\cdots\odot m_n)\\=
&-\sum_{\substack{k=1\ldots n\\ \sigma\in \op{Sh}(k,n-k),\sigma (1)=1}}(-1)^{|m_1\ldots m_{n}|_\sigma}s(D^k(am_{\sigma(1)\ldots \sigma(k)}) \odot m_{\sigma(k+1)\ldots\sigma(n)})\\
&-\sum_{\substack{k=1\ldots n-1\\ \sigma\in \op{Sh}(k,n-k), \sigma(k+1)=1}}(-1)^{|m_1\ldots m_n|_\sigma+|a|(|m_{\sigma(1)\ldots\sigma(k)}|)}s(D^k(m_{\sigma(1)\ldots\sigma(k)}) \odot am_{\sigma(k+1)\ldots\sigma(n)})\\
&-\sum_{\substack{k=1\ldots n-1\\ \sigma\in \op{Sh}(k,n-k), \sigma(1)=1}}(-1)^{|m_1\ldots m_n|_\sigma+|m_{\sigma(1)\ldots\sigma(k)}|}[as m_{\sigma(1)\ldots\sigma(k)},sm_{\sigma(k+1)\ldots \sigma(n)}]\\
=&-\sum_{\substack{k=1\ldots n-1\\ \sigma\in \op{Sh}(k,n-k),\sigma (1)=1}}(-1)^{|m_1\ldots m_n|_\sigma}as(D^k(m_{\sigma(1)\ldots\sigma(k)}) \odot m_{\sigma(k+1)\ldots\sigma(n)})\\
&
-\sum_{\substack{k=1\ldots n-1\\ \sigma\in \op{Sh}(k,n-k),\sigma (1)=1}}(-1)^{|m_1\ldots m_n|_{\tau}+(|a|+1)(|m_{\sigma(2)\ldots\sigma(k)}|+1)}\rho^{k-1}(m_{\sigma(2)\ldots\sigma(k)})(a) sm_{\sigma(1)}\odot m_{\sigma(k+1)\ldots\sigma(n)}\\
&-\sum_{\substack{k=1\ldots n-1\\ \sigma\in \op{Sh}(k,n-k), \sigma(k+1)=1}}(-1)^{|m_1\ldots m_n|_\sigma}as(D^k(m_{\sigma(1)\ldots\sigma(k)}) \odot m_{\sigma(k+1)\ldots\sigma(n)})\\
&
-\sum_{\substack{k=1\ldots n-1\\ \sigma\in \op{Sh}(k,n-k), \sigma(1)=1}}(-1)^{|m_1\ldots m_n|_\sigma+|m_{\sigma(1)\ldots\sigma(k)}|}a[sm_{\sigma(1)\ldots\sigma(k)},sm_{\sigma(k+1)\ldots\sigma(n)}]\\
&
+\sum_{\substack{k=1\ldots n-1\\ \sigma\in \op{Sh}(k,n-k), \sigma(1)=1}}(-1)^{|m_1\ldots m_n|_\tau+(|a|+1)(|m_{\sigma(k+1)\ldots\sigma(n)}|+1)}\rho^{n-k}(m_{\sigma(k+1)\ldots\sigma(n)})(a)sm_{\sigma(1)\ldots\sigma(k)}\\
=&
(-1)^{|a|}d_Aa\cdot p(sm_{1\ldots n})+a\cdot(p\circ d)(sm_{1\ldots n}).
\end{align*}
This shows that the differential's restriction to $C_k(M)[-1]$ descends to the quotient into an $A$-module derivation. By induction, it is readily verified that the differential preserves the ideals generated by \ref{rel1} and \ref{rel2}. 
\end{proof}

Diagram \ref{diag1} is hence extended to
\begin{center}
\begin{tikzcd}\label{diag2}
M\arrow[d, rightsquigarrow,"\eta_k"]\arrow[drr,rightsquigarrow,"\Gamma_M"] & &  \\
\mathcal{L}_kC_k(M)\arrow[r,swap,"\mathcal{L}_kC_k(\Gamma)"]\arrow[d,"p"]& \mathcal{L}_kC_k(\op{Der(A)}) \arrow[r, swap,"\epsilon_k"] &\op{Der}(A).\\
\mathcal{L}_AC_A(M)\arrow[urr,swap,"\Gamma_{\mathcal{L}_AC_A(M)}"]&  &
\end{tikzcd}
\end{center}
Denote by $\eta_A$ the $L_\infty$ - morphism $p\circ \eta_k: M\to \mathcal{L}_AC_A(M)$, explicitly $$\eta_A^n(m_1\odot\cdots\odot m_n)=m_1\odot\cdots\odot m_n\in C_A(M)\subset \mathcal{L}_AC_A(M).$$
By remark \ref{rem11}, $\eta_A$ is a morphism of SH LR pairs. In fact, if $M$ is a cofibrant $A$-module, it is a SH quasi-isomorphism:

\begin{lemma}\label{Lemma2}
Given a SH LR pair $(A,M)$, with  $M$ a cofibrant $A$-module, the map
$\eta_A:M\rightsquigarrow \mathcal{L}_AC_A(M)$ is a SH quasi-isomorphism.
\end{lemma}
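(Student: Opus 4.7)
Because $f_0=\mathrm{id}_A$ on both sides of $\eta_A$, the claim reduces to showing that the linear component $\eta_A^1:M\to\mathcal{L}_A C_A(M)$ is a quasi-isomorphism of dg $A$-modules. By construction, $\eta_A^1(m)=m$ sits as the canonical weight-one element of $C_A(M)[-1]\subset\mathcal{L}_A C_A(M)$, so equivalently the quotient $\mathcal{L}_A C_A(M)/M$ must be acyclic.

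My strategy is a weight-filtration reduction. Let $\tilde F_q\mathcal{L}_A C_A(M):=p\bigl(F_q\mathcal{L}_k C_k(M)\bigr)$, where $F_q\mathcal{L}_k C_k(M)$ is the ascending filtration by total $M$-weight of Lie words in the generators $s^{-1}(m_1\odot\cdots\odot m_n)$ (well-defined over $k$, where there are no LR-relations). The cobar formula together with the $A$- and anchor-Leibniz relations defining the projection $p$ imply $d(\tilde F_q)\subseteq\tilde F_q$: the reduced coproduct splits weight additively, and only the $D^{\geq 2}$-components of $d_{C_k}$ strictly lower weight. A direct identification then shows $\mathrm{gr}_\bullet\mathcal{L}_A C_A(M)\cong\mathcal{L}_A C_A(M^\sharp)$, where $M^\sharp$ denotes $M$ with all higher $L_\infty$-brackets killed and the anchor retained. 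Comparing spectral sequences of filtered complexes (with $M$ carrying the trivial filtration concentrated in weight one), the lemma reduces to the analogous statement for $M^\sharp$.

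In the reduced setting one must show that, for a cofibrant dg $A$-module $M$ equipped with trivial higher $L_\infty$-structure, the canonical map $M\to\mathcal{L}_A C_A(M)$ is a quasi-isomorphism. This is an $A$-linear avatar of the classical cobar-bar acyclicity for the Lie operad; cofibrancy of $M$ is precisely what ensures that $\mathrm{Sym}_A^n(M[1])$ has the correct homotopy type, making a cell-by-cell induction valid. For the base case of a single free generator, acyclicity of $\mathcal{L}_A C_A(M)/M$ is verified by an explicit Koszul contracting homotopy; the induction step uses the compatibility of $\mathcal{L}_A C_A(-)$ with pushouts along generating cofibrations and with filtered colimits, together with the free--forgetful adjunction of Section \ref{one}. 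The principal obstacles are controlling the spectral sequence convergence (routine in bounded-below cases, but in full generality requiring either completion or a careful degreewise argument) and correctly handling the anchor-induced LR relations during the cell induction, which one manages by performing each cell attachment at the level of anchored modules before passing through the free LR functor.
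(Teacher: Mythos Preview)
Your overall strategy---weight filtration, spectral sequence reduction to the abelian case, then cell induction---matches the paper's. The serious gap is in your identification of the associated graded. You claim $\mathrm{gr}_\bullet\mathcal{L}_A C_A(M)\cong\mathcal{L}_A C_A(M^\sharp)$ with the \emph{anchor retained}. This is wrong, and the error propagates through the rest of your sketch.

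Look at the LR relation (\ref{rel2}): if $x$ has total weight $i$ and $y$ has total weight $j$, the left side lives in filtration $i+j$ while the anchor terms $\Gamma(y)(a)\cdot x$ and $\Gamma(x)(a)\cdot y$ land in filtration $\max(i,j)<i+j$. Thus in the associated graded the relation degenerates to $[ax,y]=(-1)^{|a||x|}[x,ay]$---the free LR algebra becomes the free \emph{Lie $A$-algebra}, i.e.\ the anchor is killed along with the higher brackets. The paper's $M_{\mathrm{tr}}$ has both trivial bracket and trivial anchor, and this is exactly what makes the graded pieces weight-homogeneous (a nontrivial anchor would prevent any such grading from existing, so your $\mathcal{L}_A C_A(M^\sharp)$ with anchor retained cannot even be a graded object).

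This matters for your base case. With trivial anchor one gets the crucial decomposition $(\mathcal{L}_A C_A)^n(A\otimes_k V)=A\otimes_k(\mathcal{L}_k C_k)^n(V)$, reducing acyclicity to the classical operadic Koszul resolution over $k$. With anchor retained, no such decomposition holds: even for a single free generator $M=A\langle m\rangle$ with $\Gamma(m)\neq 0$, the free LR algebra is not $A$-linearly free, and your ``explicit Koszul contracting homotopy'' is not available. The anchor terms you planned to ``manage by performing each cell attachment at the level of anchored modules'' are precisely the ones that the correct associated-graded identification already removes for free. Once you fix this, your cell induction becomes essentially the paper's Hall-tree-label filtration argument.
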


\begin{proof}
Equip $\mathcal{L}_AC_A(M)$ with the total filtration, so that, for example $[m_1,m_2\odot m_3]\in F^3\mathcal{L}_AC_A(M)$. Formally, the filtration is defined as follows.  As a graded $A$-module, 
$$\mathcal{L}_AC_A(M)=LR(\op{Sym}_A(M[1])[-1]).$$
Starting from the filtration $\ref{filt1}$ on a free LR pair, define $F^{m,n}$ recursively on $m$ by
\begin{itemize}
\item
$F^{1,n}LR(\op{Sym}_A(M[1])[-1])=\op{Sym}^{\leq n}_A(M[1])[-1]$; and
\item
for $m>1$, $F^{m,n}LR(\op{Sym}_A(M[1])[-1])$ is the submodule of $LR^{\leq m}(\op{Sym}_A(M[1])[-1])$ generated by $F^{1,n}LR(\op{Sym}_A(M[1])[-1])$, and $$\{[x,x']:\hspace{10pt} x\in F^{i,k} LR(\op{Sym}_A(M[1])[-1]),\hspace{5pt} x'\in F^{j,l}LR(\op{Sym}_A(M[1])[-1]),\hspace{5pt} i+j=m,\hspace{5pt} k+l=n\}.$$
\end{itemize}
Finally, the total filtration is defined as
$$ F^{n}LR(\op{Sym}_A(M[1])[-1])=\bigcup_{m\in \mathbb{N}}F^{ m, n}LR(\op{Sym}_A(M[1])[-1]).$$

Clearly, the differential respects the total filtration, giving rise to the filtered complex $(F^{ n}\mathcal{L}_AC_A(M))_n$. If the anchor in $M$ is trivial, so is the anchor in $\op{Sym}_A(M[1])[-1]$, and consequently, the free graded LR pair over it is the free graded Lie $A$-algebra, hence equipped with a weight grading in place of a filtration. In this situation, the total filtration in the above construction can also be replaced by a total weight grading on $LR(\op{Sym}_A(M[1])[-1])$. If $n$-ary brackets for $n>1$ are trivial as well, weight grading is respected by the differential.

For arbitrary $M$, denote by $M_{\op{tr}}$ the dg LR pair with the same underlying dg $A$-module $M$, but with trivial bracket and anchor. The associated graded complex of the filtration $(F^{ n}\mathcal{L}_AC_A(M))_n$ is the graded complex $\mathcal{L}_AC_A(M_{\op{tr}})$.

Finally, the unit's $\eta_A:M\rightsquigarrow \mathcal{L}_AC_A(M)$ first component $\eta_A^1$ is the inclusion $$M=F^{1}\mathcal{L}_AC_A(M)\hookrightarrow \mathcal{L}_AC_A(M).$$ By a spectral sequence argument, it is a quasi-isomorphism if the associated graded complex of $(F^{ n}\mathcal{L}_AC_A(M))$ is acyclic in weights other then one, equivalently if $\mathcal{L}_AC_A(M_{\op{tr}})$ is acyclic in those weights. 

As cofibrant modules are retracts of cell complexes, and  retracts of acyclic modules are acyclic, it is safe to assume that $M$ is a cell complex in $\op{Mod}(A)$ in the sense of \cite{DPP}, i.e. graded-free $A$-module with a well ordered set of generators for which the differential satisfies a lowering condition
$$M=A\langle m_i \rangle_{i\in I},\hspace{10pt} d(m_i)\in A\langle m_i \rangle_{j<i}.$$

Denote by $(\mathcal{L}_AC_A)^n(M_{\op{tr}})$ its weight $n$ component for $n\neq 1$. \emph{In the special case} $d_M(m_i)=0$, or more generally, if $M_{\op{tr}}=A\otimes_k V$ for a dg vector space $V$ -- viewed as a dg Lie algebra with trivial bracket,  \begin{equation}\label{equiv}(\mathcal{L}_AC_A)^n(M_{\op{tr}})=A\otimes_k(\mathcal{L}_kC_k)^n(V).\end{equation}
The right-hand-side is acyclic by an operadic argument, see for example \cite[Theorem 11.4.7]{Valette}. General case follows from here by the transfinite induction. Namely, denote $V=k\langle m_i\rangle_{i\in I}$. On the level of graded Lie algebras, equation \ref{equiv} is still satisfied, and yields
  \begin{equation}\label{equiv2}(\op{Lie}_A\op{Sym}_A)^n(M[1])[-1]=A\otimes_k(\op{Lie}_k\op{Sym}_k)^n(V[1])[-1].\end{equation}
Basis of $\op{Lie}_k\op{Sym}_k(V[1])[-1]$ is given by Hall trees -- certain binary trees whose vertices are labeled by the monomials in $\op{Sym}_k(M[1])$ with trivial coefficients. Given such tree, define its \emph{label} as the multiset of all the variables which appear in the labels of the vertices. For example, label of $[[m_1,m_2],m_2\odot m_3]$ is $\{m_3,{m_2}^2,m_1\}$.  As in the example, labels will be ordered in the decreasing order. $(\op{Lie}_k\op{Sym}_k)^n(V[1])[-1]$ is spanned by Hall trees whose label is of cardinality $n$.  Ordered labels of the Hall trees in $(\op{Lie}_k\op{Sym}_k)^n(V[1])[-1]$ form a well ordered set for the lexicographic ordering, denoted by $\Lambda$. Given a label $a\in \Lambda$, we write
$x\in (\op{Lie}_k\op{Sym}_k)^n_{\leq a}(V[1])[-1]$ if $x$ is a linear combination of Hall trees whose labels are smaller or equal to $a$. Due to cell complex differential's decreasing property, this induces a filtration $((\mathcal{L}_AC_A)^n_{\leq a}(M_{\op{tr}}))_{a\in \Lambda}$ on $(\mathcal{L}_AC_A)^n(M_{\op{tr}})$. Acyclicity is proven by induction on the filtration. Following observation is relevant for the proof:

Denote by $V_0$ the graded vector space $V$ with the zero differential and by $M_0$  the tensor product $A\otimes_k V_0$, with trivial bracket and anchor. As differential in $(\mathcal{L}_AC_A)^n(M_0)$ does not change the label, $(\mathcal{L}_AC_A)^n(M_0)$ is a direct sum of dg submodules spanned by Hall trees with a fixed label  $$(\mathcal{L}_AC_A)^n(M_0)=\oplus_{a\in \Lambda} (\mathcal{L}_AC_A)^n_{ a}(M_0),$$
in which each component is acyclic as a direct summand of an acyclic chain complex (see the above \emph{special case}).

Coming back to the induction, for $0\in \Lambda$ the minimal element,  $$(\mathcal{L}_AC_A)^n_{\leq 0}(M_{\op{tr}})=(\mathcal{L}_AC_A)^n_{ 0}(M_0)$$ is acyclic. Assume $(\mathcal{L}_AC_A)^n_{<a }(M_{\op{tr}})$ to be acyclic for some $a\in A$, and consider the two-step filtration
$$(\mathcal{L}_AC_A)^n_{<a}(M_{\op{tr}})\hookrightarrow (\mathcal{L}_AC_A)^n_{\leq a}(M_{\op{tr}}).$$
As  $$(\mathcal{L}_AC_A)^n_{\leq a}(M_{\op{tr}})/ (\mathcal{L}_AC_A)^n_{<a}(M_{\op{tr}})=(\mathcal{L}_AC_A)^n_{a}(M_0),$$ $(\mathcal{L}_AC_A)^n_{\leq a}(M_{\op{tr}})$ is acyclic as well.
\end{proof}

Finally the ground is set to prove the  equivalence between dg LR pairs and SH LR pairs with linear morphisms.
\begin{theo}\label{theo1}
\begin{enumerate}
\item
 The inclusion $\imath:{\tt dgLR}(k)\to {\tt SHLR}^{\op{lin}}(k)$ from the category of dg LR pairs to the category of SH LR pairs with linear SH morphisms admits a left adjoint, denoted by $\mathcal{L}_{-}C_{-}$ which to a SH LR pair $(A,M)$ associates the dg LR pair $(A,\mathcal{L}_{A}C_{A}(M))$ defined above.
\item
Denote by ${\tt dgLR}^{\op{cof}}(k)$ the full subcategory of ${\tt dgLR}(k)$ whose objects are pairs $(A,M)$, with $A$ a semi-free cdga, and $M$ a cell complex in $A$-modules. The full subcategory ${\tt SHLR}^{\op{lin,cof}}(k) \subset{\tt SHLR}^{\op{lin}}(k)$ is defined like-wise. The adjunction $\mathcal{L}_{-}C_{-}\dashv \imath$ restricts to
\begin{equation}\label{main_adj}\mathcal{L}_{-}C_{-}:{\tt SHLR}^{\op{lin,cof}}(k)\rightleftarrows  {\tt dgLR}^{\op{cof}}(k):\imath,\end{equation}
inducing an equivalence between their Dwyer-Kan localizations.
\end{enumerate}
\end{theo}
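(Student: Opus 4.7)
My plan is to build the adjunction in two stages and then deduce the Dwyer--Kan equivalence from the triangle identities together with Lemma \ref{Lemma2}.

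\emph{Part (1): constructing the adjunction.} The strategy is to reduce to morphisms over a fixed cdga using Proposition \ref{fact}, and then combine the bar-cobar adjunction \ref{adjLie} with the free-LR/forgetful adjunction. Proposition \ref{fact} says every linear SH morphism $(A,M)\rightsquigarrow (B,\imath N)$ factors uniquely through $(A, f_0^! N)$, so it suffices to produce a natural bijection between linear SH morphisms $(A,M)\rightsquigarrow (A,N)$ over $\mathrm{id}_A$ (with $N$ a dg LR pair) and dg LR morphisms $(A,\mathcal{L}_A C_A(M))\to (A,N)$. By Remark \ref{rem11}, the source is an $A$-multilinear $L_\infty$-morphism $M\rightsquigarrow N$ respecting anchors. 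I would identify the $L_\infty$-part with a dg Lie morphism $\mathcal{L}_k C_k(M)\to N$ via \ref{adjLie}; the $A$-multilinearity of the cogenerators forces this map to descend through the quotient to $\mathcal{L}_A C_A(M)$ (which is defined precisely to kill the $A$-multilinearity relations); the anchor condition becomes the commutation of Diagram \ref{diag2}; and the free-LR adjunction absorbs the passage from an anchored $A$-module to an LR algebra. Naturality in both variables is transparent from the construction.

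\emph{Part (2), cofibrancy.} Next I would verify that the adjunction restricts to the cofibrant subcategories. The inclusion $\imath$ preserves cofibrancy tautologically. For $\mathcal{L}_- C_-$, when $A$ is semi-free and $M$ is a cell complex in $A$-modules, I would exhibit a well-ordered Hall-type basis of $\mathcal{L}_A C_A(M)$ over $A$ refining the cell order on $M$, and verify the lowering condition for the combined differential of Lemma \ref{LtoLR} by transfinite induction along this well-order, using the same labelled-Hall-tree bookkeeping that appears in the proof of Lemma \ref{Lemma2}.

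\emph{Part (2), Dwyer--Kan equivalence.} With the restricted adjunction in hand, Lemma \ref{Lemma2} already provides that the unit $\eta_A:M\rightsquigarrow\mathcal{L}_A C_A(M)$ is a SH quasi-isomorphism at every cofibrant $M$. For the counit $\epsilon_{(A,N)}:\mathcal{L}_A C_A(\imath N)\to N$ at a cofibrant $(A,N)\in {\tt dgLR}^{\op{cof}}(k)$, I would apply the triangle identity $\imath(\epsilon_{(A,N)})\circ \eta_{\imath N}=\mathrm{id}_{\imath N}$: since $\eta_{\imath N}$ is a weak equivalence (Lemma \ref{Lemma2}) and the identity is one, two-out-of-three forces $\epsilon_{(A,N)}$ to be one. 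That both functors preserve weak equivalences between cofibrant objects then follows from naturality of $\eta$ and two-out-of-three, after reducing arbitrary linear SH morphisms to strict ones over a fixed base via Proposition \ref{fact}. One then concludes via the standard criterion that an adjunction of relative categories whose unit and counit are weak equivalences induces inverse equivalences on the Dwyer--Kan localizations.

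\emph{Main obstacle.} The most delicate step is verifying that $\mathcal{L}_A C_A(M)$ is a cell complex over $A$ whenever $M$ is, because the construction mixes the higher $L_\infty$-structure on $M$ into arbitrarily high weights, so the resulting combined differential must still satisfy a single lowering condition on a well-ordered basis. A secondary technicality is the weak-equivalence preservation of $\mathcal{L}_- C_-$ along a morphism whose base change $f_0$ is itself a quasi-isomorphism: the reduction to a fixed base via Proposition \ref{fact} needs to be accompanied by the fact that Lie--Rinehart pullback along a cdga quasi-isomorphism preserves SH quasi-isomorphisms in the cofibrant setting.
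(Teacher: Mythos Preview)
Your proposal is correct and follows essentially the same route as the paper: reduce Part~(1) to a fixed base via Proposition~\ref{fact}, then use the bar--cobar adjunction together with $A$-multilinearity to descend through the quotient $\mathcal{L}_kC_k(M)\twoheadrightarrow\mathcal{L}_AC_A(M)$; for Part~(2), deduce the counit is a weak equivalence from the triangle identity and Lemma~\ref{Lemma2}, then invoke the Dwyer--Kan criterion. Two remarks: first, your ``secondary technicality'' is unnecessary---preservation of weak equivalences by $\mathcal{L}_{-}C_{-}$ follows directly from the naturality square $\imath(\mathcal{L}C(f))\circ\eta=\eta\circ f$ and 2-out-of-3, with no need to reduce to a fixed base or invoke pullback behaviour; second, the paper's proof does not verify the cell-complex property of $\mathcal{L}_AC_A(M)$ within the argument itself (it is deferred to Remark~\ref{remark}), so your Hall-tree bookkeeping, while more laborious, actually fills a gap the paper leaves implicit.
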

\begin{proof}
\begin{enumerate}
\item Using the definition of adjunction via universal morphism, it remains to show that, given a linear SH morphism $(f_0,f):(A,M)\rightsquigarrow (B,N)$, with $(B,N)$ a dg LR pair, there exists a unique morphism of dg LR pairs $(f_0,\overline{f}):(A,\mathcal{L}_{M}C_{A}(M))\to(B,N)$ such that the following diagram commutes:
\begin{center}
\begin{tikzcd}
(A,M)\arrow[d, rightsquigarrow,"(\op{id}{,}\eta_A)"]\arrow[drr,rightsquigarrow,"(f_0{,}f)"] &  &\\
(A{,}\mathcal{L}_AC_A(M))\arrow[rr,swap,"(f_0{,}\overline{f})"]& &(B,N).
\end{tikzcd}
\end{center}
Due to Proposition \ref{fact}, the proof reduces to the case $A=B$, $f_0=\op{id}$.
In this situation, $f$ is in particular an infinity morphism of $L_\infty$-algebras, hence factors as 
$$M\overset{\eta_k}{\rightsquigarrow}\mathcal{L}_kC_k(M)\xrightarrow{\tilde{f}}N,$$
where $\tilde{f}$ is uniquely determined as a morphism of graded Lie algebras on the generators in $\op{Sym}_k(M[1])[-1]$, where it reads
$$\tilde{f}(s(m_1\odot\cdots\odot m_n))=s f^n(m_1\odot\cdots\odot m_n).$$
Since $f^n$ are $A$-multilinear, the restriction of $\tilde{f}$ to $\op{Sym}_k(M[1])[-1]$ factors through $\op{Sym}_A(M[1])[-1]$ via $A$-linear map, denoted by $\overline{f}_{\op{gen}}$. As $N$ is a dg LR pair, its higher anchors vanish, hence 
$$\Gamma \circ f^n(m_1\odot\cdots\odot m_n)=\Gamma^n(m_1\odot\cdots\odot m_n).$$
Thus, $\overline{f}_{\op{gen}}$ is a map of anchored graded $A$-modules, and uniquely determines a morphism of graded LR pairs $\overline{f}:LR\op{Sym}_A(M[1])[-1]\to N$. It is left to verify that $\overline{f}$ respects the differential. However, this is immediate, since $\tilde{f}$, which is the composition
$$\mathcal{L}_kC_k(M)\overset{p}{\twoheadrightarrow} \mathcal{L}_AC_A(M)\xrightarrow{\overline{f}} N$$ does.\medskip

\item Recall from \cite{P} that SH quasi-isomorphisms in enjoy the 2-out-of-3 property. By Lemma \ref{Lemma2}, adjunction's unit is a linear SH quasi-isomorphism. From the naturality of the unit and 2-out-of-3 property, it follows that $\mathcal{L}_{-}C_{-}$ sends linear SH quasi-isomorphisms to quasi-isomorphisms. For a dg LR pair $(A,M)$ denote by $\epsilon_A:\mathcal{L}_{A}C_{A}(M)\to M$ the adjunction's unit. As with any adjunction, the composition
$$M\overset{\eta_A}{\rightsquigarrow} \mathcal{L}_{A}C_{A}(M)\xrightarrow{\epsilon_A}M$$
is identity. By the 2-out-of-3 property, unit is a quasi-isomorphism of dg LR pairs. Finally, \cite[Corollary 3.6]{DK} applies, and the adjunction \ref{main_adj} induces and equivalence between the Dwyer-Kan localizations of ${\tt SHLR}^{\op{lin,cof}}(k)$ and ${\tt dgLR}^{\op{cof}}(k)$.
\end{enumerate}
\end{proof}
\begin{rem}\label{remark}
\begin{enumerate}
\item
For a SH LR pair $(A,M)$ with $M$ a cell complex of $A$-modules, $\mathcal{L}_AC_A(M)$ is not only a  cell complex of $A$-modules, but also a cell complex in the Nuiten's semi-model structure on dg Lie-Rinehart algebras over $A$. Hence, the equivalence is threefold, as redefining ${\tt dgLR}^{\op{cof}}(k)$  as consisting of pairs $(A,M)$ with $A$ semi free, and $M$ a cell complex in Nuiten's semi-model structure yields the same result. Letter interpretation is used in section \ref{cart}.
\item That $A$ is semi-free is not used. Similarly, the construction applies if $M$ is only a cofibrant $A$-module instead of a cell complex. Hence, the above proof gives three threefold equivalences for different cofibrancy conditions. However, it is not clear if the resulting nine infinity categories are all equivalent. Reason that we emphasize one particular cofibrancy condition is that it is the one for which equivalence extends to general morphisms.
\end{enumerate}
\end{rem}
Finally, we turn our attention to general morphisms.
Let $A=(k[x_i]_{i\in I},d_A)$ be a semi-free cdga, and $M=(A\langle m_j\rangle_{j\in J},d_M)$ a cell complex in the category of $A$-modules. Endow $(A,M)$ with the structure of a SH LR pair, and denote by $d_{CE}$ the corresponding Chevalley-Eilenberg differential on $\widehat{\op{Sym}}_AM^*$. Denote by $\Omega_A$ the graded module of K\"ahler differentials on $A$. Denote by $\xi_i$ the generator $dx_i\in \Omega_A$, and by $c_i$ its $(1)$-shift.  Define another SH LR pair $(A,\overline{M})$ in terms of its Chevalley-Eilenberg complex as $\widehat{\op{Sym}}_A(M^*\oplus \Omega_A^*\oplus\Omega_A^*[-1])$, with the differential
\begin{equation}\label{ass}
\begin{split}
d(x_i)&=c_i^*+\sum_{\substack{n\in\mathbb{N}\\i_1<\ldots< i_n\in I}}\frac{1}{n_1!\cdots n_n!}(\xi_{i_1}^*\partial_{i_1})^{n_1}\cdots(\xi_{i_n}^*\partial_{i_n})^{n_n}d_{CE}(x_i);\\
d(m_j^*)&=\sum_{\substack{n\in\mathbb{N}\\i_1<\ldots< i_n\in I}}\frac{1}{n_1!\cdots n_n!}(\xi_{i_1}^*\partial_{i_1})^{n_1}\cdots(\xi_{i_n}^*\partial_{i_n})^{n_n}d_{CE}(m_j^*);\\
d(\xi_i^*)&=-c_i^*;\hspace{15pt}d(c_i^*)=0.\\
\end{split}
\end{equation}
The differential operator
$$\sum_{\substack{n\in\mathbb{N}\\i_1<\ldots< i_n\in I}}\frac{1}{n_1!\cdots n_n!}(\xi_{i_1}^*\partial_{i_1})^{n_1}\cdots(\xi_{i_n^*}\partial_{i_n})^{n_n}$$ renames each subset of variables $x_k$ in a monomial to $\xi_k^*$, then sums up. For example,
$$x_1^2x_2 m_j\mapsto x_1^2x_2 m_j+ 2x_1\xi_1^* x_2 m_j +  2x_1\xi_1^* \xi_2^* m_j + {\xi_1^*}^2 x_2 m_j + x_1^{2}\xi_2^* m_j +{\xi_1^*}^2 \xi_2^* m_j.$$
 
Denote by ${\tt SH LR}^{\op{cof}}$ the category of SH LR pairs $(A,M)$ with $A$ a semi-free cdga, and $M$ a cell complex in the category of $A$-modules. We first show that the assignment $(A,M)\mapsto (A,\overline{M})$ constitutes the right adjoint to the inclusion of a wide sub-category
\begin{equation}\label{adjunkcija2}\imath:{\tt SH LR}^{\op{lin,cof}}(k)\rightleftarrows {\tt SH LR}^{\op{cof}}(k):(-,\overline{\hspace{2pt}\cdot\hspace{2pt}}).\end{equation}
For this, it suffices to construct the adjunction's counit $\epsilon_{(A,M)}:(A,\overline{M})\to (A,M)$ which satisfies the following universal property: Given a SH LR pair $(B,N)$, and a general SH morphism $f:(B,N)\rightsquigarrow (A,M)$ there exists a unique linear SH morphism $\overline{f}:(B,N)\to (A,\overline{M})$ such that $f=\epsilon\circ\overline{f}.$

Counit $\epsilon$ is defined as the map $$\epsilon^*:\widehat{\op{Sym}}_A M^*\to \widehat{\op{Sym}}_A (M^*\oplus \Omega_A^*\oplus \Omega_A^*[-1]),\hspace{10pt}\epsilon^*(x_i)=x_i+\xi_i^*,\hspace{10 pt} \epsilon^*(m_i^*)=m_i^*.$$
With this, the unique map $\overline{f}$ is defined as\footnote{${f^*_k}$ denotes the weight $k$ component of ${f^*}$}
$$\overline{f}^*: \widehat{\op{Sym}}_A (M^*\oplus \Omega_A^*\oplus \Omega_A^*[-1])\to \widehat{\op{Sym}}_B (N^*), $$ $$\overline{f}^*(x_i)={f^*_0}(x_i),\hspace{10pt}\overline{f}^*(\xi_i^*)={f^*_{>0}}(x_i),\hspace{10pt}\overline{f}^*(c_i^*)=d_{CE}({f^*_{>0}}(x_i))\hspace{10pt}\overline{f}^*(m_j^*)={f^*}(m_j^*),$$  Clearly, adjunction's counit is a weak equivalence. By the 2-out-of-3 property, the same holds for the adjunction's unit, hence the adjunction is an equivalence on the level of infinity categories. This settles the proof of the following theorem:

\begin{theo}\label{theo2}
The inclusion of a wide subcategory  $${\tt SH LR}^{\op{cof,lin}}(k)\hookrightarrow  {\tt SH LR}^{\op{cof}}(k)$$
is under Dwyer-Kan localization an equivalence of infinity categories.
\end{theo}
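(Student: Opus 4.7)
The plan is to realize the inclusion as the left adjoint in an adjunction ${\tt SH LR}^{\op{cof,lin}}(k)\rightleftarrows{\tt SH LR}^{\op{cof}}(k)$ whose unit and counit are both SH quasi-isomorphisms, and then invoke \cite[Corollary 3.6]{DK} exactly as at the end of the proof of Theorem \ref{theo1}. The right adjoint $R$ sends $(A,M)$ to the pair $(A,\overline{M})$ built from the Chevalley–Eilenberg description $\widehat{\op{Sym}}_A(M^*\oplus\Omega_A^*\oplus\Omega_A^*[-1])$ with differential given by (\ref{ass}), and the counit $\epsilon_{(A,M)}\colon(A,\overline{M})\rightsquigarrow(A,M)$ is the general SH morphism whose dual sends $x_i\mapsto x_i+\xi_i^*$ and $m_j^*\mapsto m_j^*$.

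The first verification I would carry out is that this construction is well defined: the differential in (\ref{ass}) squares to zero because the operator $\sum\frac{1}{n_1!\cdots n_r!}(\xi_{i_1}^*\partial_{i_1})^{n_1}\cdots(\xi_{i_r}^*\partial_{i_r})^{n_r}$ implements the formal substitution $x_i\mapsto x_i+\xi_i^*$, so the $d_{CE}^2=0$ identity in $\widehat{\op{Sym}}_AM^*$ carries over verbatim, while the auxiliary pair $\xi_i^*\mapsto -c_i^*$, $c_i^*\mapsto 0$ is acyclic and decouples from the rest. One also reads off directly from (\ref{ass}) that $\overline{M}$ is a cell complex in $A$-modules with generators $\{m_j\}\sqcup\{\xi_i\}\sqcup\{c_i\}$ under the well ordering inherited from those of $M$ and $A$.

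Next I would establish the universal property of $\epsilon$. Given a general SH morphism $f\colon(B,N)\rightsquigarrow(A,M)$ with $(B,N)$ linear cofibrant, the requirement $\epsilon\circ\overline{f}=f$ forces $\overline{f}^*(x_i)=f_0^*(x_i)\in B$ (this is also what linearity of $\overline{f}$ demands) and $\overline{f}^*(\xi_i^*)=f^*_{>0}(x_i)$, splitting $f^*(x_i)$ into its weight-zero part in $B$ and its higher-weight tail lying in $\op{Sym}^{>0}_BN^*$; commutation with $d_{CE}$ then forces $\overline{f}^*(c_i^*)=d_{CE}(f^*_{>0}(x_i))$, and $\overline{f}^*(m_j^*)=f^*(m_j^*)$. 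Uniqueness is immediate. For existence, one checks that this prescription extends to a morphism of differential graded pro-$B$-algebras; this reduces, via (\ref{ass}) and the Taylor-series interpretation of the substitution operator, to the chain map condition for $f^*$ computed weight by weight.

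Finally I would check that $\epsilon$ is a weak equivalence: its cdga component is $\op{id}_A$, and its dg-module component $\overline{M}\to M$ is the projection along the acyclic summand spanned by the $\xi_i,c_i$, which is a mapping cone associated with the pairing $\xi_i^*\leftrightarrow c_i^*$ and hence acyclic. By naturality of the unit $\eta$ together with the 2-out-of-3 property for SH quasi-isomorphisms established in \cite{P}, the unit is also a weak equivalence, and \cite[Corollary 3.6]{DK} concludes. The main technical obstacle is the compatibility verification in step three: matching the combinatorics of the operator $\sum\frac{1}{n_1!\cdots n_r!}(\xi_{i_1}^*\partial_{i_1})^{n_1}\cdots(\xi_{i_r}^*\partial_{i_r})^{n_r}$ against the weight stratification of the identity $f^*\circ d_{CE}=d_{CE}\circ f^*$. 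Everything else is either formal or a routine retract/cofibrancy bookkeeping.
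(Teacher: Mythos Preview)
Your proposal is correct and follows essentially the same approach as the paper: build the right adjoint $(A,M)\mapsto(A,\overline{M})$ via the Chevalley--Eilenberg description with differential (\ref{ass}), exhibit the counit $\epsilon^*(x_i)=x_i+\xi_i^*$, $\epsilon^*(m_j^*)=m_j^*$, verify the universal property by the weight-splitting formulas $\overline{f}^*(x_i)=f_0^*(x_i)$, $\overline{f}^*(\xi_i^*)=f^*_{>0}(x_i)$, $\overline{f}^*(c_i^*)=d_{CE}(f^*_{>0}(x_i))$, $\overline{f}^*(m_j^*)=f^*(m_j^*)$, and conclude via 2-out-of-3 from the evident acyclicity of the $(\xi_i,c_i)$ summand. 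You supply more commentary on the square-zero check and the combinatorial compatibility for existence of $\overline{f}$ than the paper does, but the construction, the formulas, and the logical skeleton are identical.
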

\section{Cofibrations, factorizations, and lifting properties}\label{lift}

First, we deal with morphisms of SH LR pairs over a fixed base cdga. In this context, a SH LR[1] pair $(A,M)$ is called a SH LR[1] algebra $M$ over $A$. A morphism $(\op{id},f):(A,M)\to (A,N)$ is denoted by $f$.

\begin{defi}
Given SH LR[1] algebras $M$ and $N$ over $A$, a strict morphism $f:M\to N$ of SH LR[1] algebras ($f^1=f$, $f^i=0$ for $i\neq 1$) is called
\begin{enumerate}
\item a \emph{cofibration} if it is a relative cell complex of $A$-modules; and
\item a \emph{fibration} if it is a fibration of $A$-modules.
\end{enumerate}
In particular, $M$ is cofibrant if it is a cell complex of $A$-modules.
\end{defi}
\begin{rem} All the results of the present section remain valid when cofibrations are defined as cofibrations of $A$-modules in place of relative cell complexes. However, proofs become more involved.
\end{rem}
The following lemma shows that morphisms from a cofibrant SH LR[1] $A$-algebra decompose into a cofibration followed by a weak equivalence:

\begin{lemma}\label{lemma18}
Given $A\in{\tt cdga}(k)$, let $M$ and $N$ be SH LR[1] algebras over $A$, with $M$ cofibrant, and let $f=(f^n)_{n\in \mathbb{N}}:M\rightsquigarrow N$ be a linear SH[1] morphism. Given a decomposition in ${\tt Mod}(A)$ of $f^1$ 
$$M\overset{\imath}{\hookrightarrow}\overline{M}\overset{p^1}{\twoheadrightarrow} N$$
into a relative cell complex followed by a trivial fibration, there exists a SH LR[1] algebra structure on $\overline{M}$ such that $\imath$ is a (strict) SH LR[1] morphism, and an extension $p=(p^n)_{n\in \mathbb{N}}$ of $p^1$ into a linear SH[1] morphism, such that $f=p\circ\imath$. If $f^k=0$ for $k>k_0$, for appropriate SH LR[1] structure on $\overline{M}$, $p$ can be chosen to satisfy the same property.

\end{lemma}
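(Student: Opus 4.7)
My plan is transfinite induction on the cell filtration of the relative cell complex $\imath: M \hookrightarrow \overline{M}$. Decompose $\imath$ into a well-ordered sequence
$$M = \overline{M}_0 \hookrightarrow \overline{M}_1 \hookrightarrow \cdots \hookrightarrow \overline{M}_\lambda \hookrightarrow \cdots,$$
where $\overline{M}_{\lambda+1} = \overline{M}_\lambda \oplus A\cdot e_\lambda$ as graded $A$-modules and $d(e_\lambda)\in \overline{M}_\lambda$ is prescribed by the cell attachment, taking unions at limit ordinals. At the base stage $\lambda = 0$ I keep the original SH LR[1] structure on $M$ and set $p^k|_M := f^k$; strictness of $\imath$ and the identity $p \circ \imath = f$ are then tautological on $M$.

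At the successor step from $\overline{M}_\lambda$ to $\overline{M}_{\lambda+1}$, I extend $D^k$, $\Gamma^k$, and $p^k$ by specifying their values on symmetric products that involve the new generator $e_\lambda$. Because $e_\lambda$ spans a free $A$-summand, this reduces to a family of free extension problems. The Maurer-Cartan equation \ref{MC}, the Leibniz rule, and condition (2) of Definition \ref{defLin} amount to equations of the shape
$$d_{\overline{M}_{\lambda+1}}\bigl(D^k(e_\lambda \odot \cdots)\bigr) = \Phi, \qquad d_N\bigl(p^k(e_\lambda \odot \cdots)\bigr) = \Psi,$$
where $\Phi \in \overline{M}_\lambda$ and $\Psi \in N$ are polynomial expressions in previously constructed data. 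Applying the Maurer-Cartan operator twice — equivalently, $d^2 = 0$ on the dual pro-algebra $\widehat{\op{Sym}}_A(\overline{M}^*)$ in which the entire SH LR[1] structure is encoded — verifies that $\Phi$ and $\Psi$ are cocycles; moreover $\Psi = p^1(\Psi')$ for a cocycle $\Psi'$ obtained from the analogous identity on the domain side. The trivial fibration property of $p^1$ then lifts $\Psi$ to the required preimage, producing a valid value for $p^k(e_\lambda \odot \cdots)$, while the bracket and anchor equations are solved on the free $A$-summand spanned by $e_\lambda$ directly.

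The main obstacle I anticipate is the bookkeeping: verifying that the obstructions $\Phi, \Psi$ are genuine cocycles at every stage and that $\Psi$ lies in the image of $p^1$ restricted to cocycles. For the bounded case $f^k = 0$ when $k > k_0$, at every inductive step I set $D^k(e_\lambda \odot \cdots) = 0$, $\Gamma^k(e_\lambda \odot \cdots) = 0$, and $p^k(e_\lambda \odot \cdots) = 0$ for all $k > k_0$. The vanishing of $f^k$ in these weights forces both $\Phi$ and $\Psi$ to vanish identically above weight $k_0$, rendering the zero prescription consistent with all compatibility equations and yielding the desired $p$ with $p^k = 0$ for $k > k_0$.
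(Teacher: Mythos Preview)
Your transfinite induction on the cell filtration has a structural gap. The only homological input available is that $p^1:\overline{M}\twoheadrightarrow N$ is a trivial fibration, and this holds for the \emph{full} $\overline{M}$, not for the intermediate stages $\overline{M}_\lambda$. At a generic stage the restriction $p^1|_{\overline{M}_\lambda}:\overline{M}_\lambda\to N$ is neither surjective nor a quasi-isomorphism (the later cells are there precisely to fix this), so there is no reason for your obstruction $\Phi\in\overline{M}_{\lambda+1}$ to be a coboundary, nor for $\Psi$ to lift. You assert that the bracket and anchor equations ``are solved on the free $A$-summand spanned by $e_\lambda$ directly,'' but freeness only lets you \emph{choose} a value for $D^k(e_\lambda\odot\cdots)$; the $L_\infty$ relation then forces $D^1$ of that value to equal $\Phi$, and nothing in your setup guarantees $\Phi$ is exact in $\overline{M}_{\lambda+1}$.

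The paper avoids this by inducting on weight rather than on cells, working with all of $\overline{M}$ at every step. It fixes $p^n$ once and for all as the extension of $f^n$ by zero on the new generators, sets the anchor to be $\Gamma_N\circ p$, and then constructs $D^n$ weight by weight. The obstruction $\eta=[\tilde D^{n+1},D^1]+\frac{1}{2}\sum_{k=2}^{n}[D^k,D^{n+2-k}]$ is $A$-multilinear and a cycle in $\op{Hom}_A(\overline{M}^{\odot(n+1)},\overline{M})$ that vanishes after composition with $p^1$ and after restriction along $\imath$; a comparison of homotopy fibre sequences for the maps $p^1\circ-$ and $-\circ\imath^{\odot(n+1)}$ (both of which use the full trivial fibration and the full relative cell inclusion) then shows $\eta$ is a coboundary in the relevant kernel. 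This is the step your proposal is missing.

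Your treatment of the bounded case is also too strong. You claim one can take $D^k=0$ for $k>k_0$, but the $L_\infty$ relation at weight $n$ with $k_0<n\le 2k_0-1$ involves nontrivial combinations of $D^j$ with $j\le k_0$, and there is no reason these cancel. The lemma only asserts $p^k=0$ for $k>k_0$, and in the paper's scheme this is automatic: $p^k$ is by construction the extension of $f^k$ by zero, so $f^k=0$ gives $p^k=0$ regardless of what $D^k$ turns out to be.
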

\begin{proof}
 As a cell complex, $M=A\langle v_j \rangle_{j\in J}$, with a differential which satisfies the lowering condition $d(v_j)\in A\langle v_k\rangle_{k<j}.$ Denote $\overline{M}=(M\oplus A\langle x_i \rangle_{i\in I},d)=(A\langle x_i,v_j \rangle_{i,j},d)$, with $I$ a well-ordered set with respect to which $d$ satisfies the lowering condition $d(x_i)\in M\oplus A\langle x_l \rangle_{l<i}$. 
 By definition, for $n>1$, $p^n$ is the extension of $f^n$ by zero to the generators $x_i$, and the anchor is $\Gamma=\Gamma_N\circ p$. Clearly, $f^k=0$ implies $p^k=0$.

In weight one, $D^1$ is the differential on $\overline{M}$. Next, we define $D^2$. For all $i_1,i_2\in I$ (resp. $i\in I, j\in J$), fix any $\overline{m}_{i_1i_2}$ (resp. $\overline{m}_{ij}$)$\in \overline{M}$ such that 
\begin{equation*}
\begin{split}
&p^1(\overline{m}_{i_1i_2})=D^1(p^2(x_{i_1}\odot x_{i_2}))+D^2(p^1(x_{i_1})\odot p^1(x_{i_2}))-p^2(D^1(x_{i_1})\odot x_{i_2})-(-1)^{|x_{i_1}|}p^2(x_{i_1}\odot D^1(x_{i_2})),\\
&p^1(\overline{m}_{ij})=D^1(p^2(v_{i}\odot x_{j}))+D^2(p^1(v_{i})\odot p^1(x_{j}))-p^2(D^1(x_{i})\odot v_{j})-(-1)^{x_i}p^2( x_{i}\odot D^1(v_{j})).
\end{split}
\end{equation*} Define provisional differential $\tilde{D}^2$ as the $k$-linear graded symmetric map such that, with $\rho^1$ defined as in Equation \ref{SHLRprop},
\begin{equation*}
\begin{split}
&\tilde{D}^2(ax_i,b v_j)=(-1)^{|a|+(1+|x_i|)|b|}ab\overline{m}_{ij}+ (-1)^{(|b|+|v_j|)(|a|+|x_i|)+|b|}b\rho^1(v_j)(a)\cdot x_i+(-1)^{|a|}a\rho^1(x_i)(b)\cdot v_j;\\
&\tilde{D}^2(ax_{i},b x_{j})=(-1)^{|a|+(1+|x_{i}|)|b|}ab\overline{m}_{ij}+ (-1)^{(|b|+|x_{j}|)(|a|+|x_{i}|)+|b|}b\rho^1(x_{j})(a)\cdot x_{i}+(-1)^{|a|}a\rho^1(x_{i})(b)\cdot x_{j};\\
&\tilde{D}^2(av_{i},b v_{j})=D^2((av_{i})\odot (b v_{j})).
\end{split}
\end{equation*}
By construction, $[\tilde{D}^2,D^1]$ is a morphism of dg $A$-modules, hence a cycle in $\op{Hom}_A(\overline{M}^{\otimes 2},\overline{M})$), and $p^1\circ [\tilde{D}^2,D^1]=0$. Further on, $ [\tilde{D}^2,D^1]\circ \imath=\imath\circ[D^2_M,D^1_M]=0$. Finally, in the quasi-isomorphism of short exact sequences (homotopy fiber sequences) in $\op{Mod}(A)$
\begin{center}
\begin{tikzcd}
\op{Ker}_{u}\arrow[r,hook]\arrow[d]&\op{Hom}(\overline{M}^{\odot 2},\overline{M})\arrow[r,two heads]\arrow[d,"p^1\circ-"]&\op{Hom} ({M}^{\odot 2},\overline{M})\arrow[d,"p^1\circ-"]\\
\op{Ker}_{d}\arrow[r,hook]&\op{Hom} (\overline{M}^{\odot 2},N)\arrow[r,two heads]&\op{Hom} ({M}^{\odot 2},N),
\end{tikzcd}
\end{center}
$[\tilde{D}^2,D^1]$ is a cycle in $\op{ker}_u$ which maps to zero in $\op{ker}_d$. As the induced map between the homotopy fibers is a quasi-isomorphism, it is a boundary. Hence, there exists an $A$-bilinear map $\delta^2$ such that for $D ^2=\tilde{D}^2+\delta^2$, $[{D}^2,D^1]=0$, $p^1\circ D^2=p^1\circ \tilde{D}^2=p^2\circ D^1+D^2\circ p^1+D^1\circ p^2$, and similarly for $D^2\circ \imath$. 

Assume that $D^k$ are defined up to $k=n$. $D^{n+1}$ is defined as follows. 
For a family of generators in $A\langle x_i, v_j \rangle_{i\in I,j\in J}$,  $(x_1,\ldots, x_k,v_{k+1},\ldots v_{n+1})$, (with at least one generator $x_i$) denote $y_i=x_i$ for $i\leq k$, and $y_j=v_j$ for $j>k$, and fix any $\overline{m}_{y_1\ldots y_{n+1}}$ such that
\begin{equation*}
\begin{split}
p^1(&\overline{m}_{y_1\ldots y_{n+1}})\\
=&\sum_{\substack {l=1,\ldots, n+1\\k_1+\ldots+k_l=n+1}}\sum_{\sigma\in\op{Sh}(k_1,\ldots,k_l)}(-1)^{|y_1\ldots y_{n+1}|_{\sigma}}D^l(p^{k_1}(y_{\sigma(1)}\odot\cdots\odot y_{\sigma(k)})\odot\cdots\odot p^{k_l}(y_{\sigma(n+2-k_l)}\odot\cdots\odot y_{\sigma(k_k)}))\\
-&\sum_{k=1,\ldots,n+1}\sum_{\sigma\in\op{Sh}(k,n+1-k)}(-1)^{|y_1\ldots y_{n+1}|_{\sigma}}p^{n+2-k}(D^{k}(y_{\sigma(1)}\odot\cdots\odot y_{\sigma(k)})\odot y_{\sigma(k+1)}\odot\cdots\odot y_{\sigma(n+1)}).
\end{split}
\end{equation*}
Define a provisional differential $\tilde{D}^{n+1}$ as the $k$-linear graded symmetric map such that, for $y_1,\ldots,y_{n+1}$ as above,
\begin{equation*}
\begin{split}
\tilde{D}^{n+1}&(a_1y_1,\ldots,a_{n+1}y_{n+1})=(-1)^{\sum_{i=1}^{n+1}|a_i|(1+|y_1|+\ldots+|y_{i-1}|)}a_1\cdots a_{n+1}\overline{m}_{y_1\dots y_{n+1}}\\
+&\sum_{k=1}^{n+1}(-1)^{{\sum_{i=1}^{n+1}|a_i|(1+|y_1|+\ldots+|y_{i-1}|)}+|y_k|(|y_{k+1}|+\ldots+|y_{n+1}|)+|a_k|(1+|a_{k+1}|+\ldots+|a_{n+1}|+|y_1|+\ldots+|y_{n+1}|)}\\
&a_1\cdots\hat{a}_k\cdots a_{n+1} \rho^{n+1}(y_1\odot\cdots\odot \hat{y}_k\odot\cdots\odot y_{n+1})(a_k)\cdot y_k;\\
\tilde{D}^{n+1}&(a_1v_{1},\ldots,a_{n+1}v_{n+1})=D^{n+1}(av_1\odot \cdots\odot a_{n+1} v_{n+1}).
\end{split}
\end{equation*}
By construction,
$$\eta=[\tilde{D}^{n+1},D^1]+\frac{1}{2}\sum_{k=2}^n [{D}^{k},D^{n+2-k}]$$
is a $A$-linear, and $ \eta\circ \imath=0$. Somewhat less obvious is that $p^1\circ \eta=0$. Denote $D=D^1+\ldots+D^n+\tilde{D}^{n+1}$.  By induction, $[D,D]$ vanishes in weights $\leq n$. Hence, $p^1\circ\eta$ is the weight $n+1$ component of $\frac{1}{2}p\circ [D_N,D_N]$.  By the construction, $[p,D]$ vanishes in weights $\leq n+1$, hence $p^1\circ\eta$ is further equal to the weight $n+1$ component of $[D,D]\circ p$, which is zero, $N$ being an $L_\infty$ algebra.
 By the Jacobi identity for Nijenhuis–Richardson bracket, $\eta$ is a cycle in $\op{Hom}_A(\overline{M}^{\odot n+1},\overline{M})$. Finally, in the quasi-isomorphism of short exact sequences (homotopy fiber sequences) in $\op{Mod}(A)$
\begin{center}
\begin{tikzcd}
\op{Ker}_{u}\arrow[r,hook]\arrow[d]&\op{Hom}(\overline{M}^{\odot n+1},\overline{M})\arrow[r,two heads]\arrow[d,"p^1\circ-"]&\op{Hom} ({M}^{\odot n+1},\overline{M})\arrow[d,"p^1\circ-"]\\
\op{Ker}_{d}\arrow[r,hook]&\op{Hom} (\overline{M}^{\odot n+1},N)\arrow[r,two heads]&\op{Hom} ({M}^{\odot n+1},N),
\end{tikzcd}
\end{center}
$\eta$ is a cycle in $\op{ker}_u$ which maps to zero in $\op{ker}_d$. As the induced map between the homotopy fibers is a quasi-isomorphism, it is a boundary. Take $\delta \in \op{Ker}_{u}$ with  $[\delta,D^1]=\eta$. $p^1\circ \delta$ is a cycle in $\op{Ker}_{d}$, so there exists a cycle $\delta' \in \op{Ker}_u$ with $p^1\circ \delta=p^1\circ \delta'$. $\Delta:=\delta-\delta'$ satisfies $[\Delta,D^1]=\eta$ and  $p^1\circ \Delta=0$. For $D ^{n+1}=\tilde{D}^{n+1}+\Delta$, 
$$[{D}^{n+1},D^1]+\frac{1}{2}\sum_{k=2}^n [{D}^{k},D^{n+2-k}]=0,$$ 
$$p^1\circ D^{n+1}=p^1\circ \tilde{D}^{n+1}=\sum_{k=2}^n p^k\circ D^{n+2-k}+\sum_{k=1}^{n+1}D^{n+2-k}\circ p^k,$$ and $$D^{n+1}\circ \imath-\imath\circ D^{n+1}=\tilde{D}^{n+1}\circ \imath-\imath\circ {D}^{n+1}=0.$$ 
\end{proof}

The following lemma establishes the lifting properties over a fixed base:
\begin{lemma}\label{lemma19}
Given $A\in {\tt dgca}(k)$, let $K, L, M, N$ be SH LR[1] algebras over $A$, with $M$ cofibrant. Let $f:M\hookrightarrow N$ be a cofibration, and $\phi:L\twoheadrightarrow K$ a fibration. Let  $g:M\rightsquigarrow L$ and $\psi:N\rightsquigarrow K$ be linear SH[1] morphisms with $\phi\circ g=\psi\circ f$. If either $f$ or $\phi$ is a weak equivalence, there exists a linear SH[1] morphism $l:N\to L$ such that $g=l\circ f$, and $\psi=\phi\circ l$.
\begin{center}
\begin{tikzcd}
M\arrow[d, hook,"f"]\arrow[d, hook,swap,"\sim"]\arrow[rr,rightsquigarrow,"g"]  & &L\arrow[d, two heads,"\phi"]&M\arrow[d, hook,"f"]\arrow[rr,rightsquigarrow,"g"]  & &L\arrow[d, two heads,"\phi"]\arrow[d, ,swap,"\sim"]\\
N\arrow[urr,dashed,rightsquigarrow,"l"]\arrow[rr,rightsquigarrow,"\psi"] & &K;&N\arrow[urr,dashed,rightsquigarrow,"l"]\arrow[rr,rightsquigarrow,"\psi"] & &K.
\end{tikzcd}
\end{center}
\end{lemma}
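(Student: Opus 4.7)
The plan is to construct $l=(l^n)_{n\geq 1}$ by induction on weight $n$, mirroring the obstruction-theoretic argument of Lemma~\ref{lemma18}. By Remark~\ref{rem11}, over the fixed base $A$ linearity of an SH morphism amounts to $A$-multilinearity together with the $L_\infty$-morphism relations; the anchor compatibility for $l$ will then follow automatically from the identity $\phi^1\circ l^n=\psi^n$ combined with the corresponding properties of $g$, $\psi$ and $\phi$. At each stage it therefore suffices to produce an $A$-multilinear $l^n\colon\op{Sym}^n_A(N[1])\to L[1]$ extending $g^n$, satisfying $\phi^1\circ l^n=\psi^n$ and the weight-$n$ $L_\infty$-morphism relation.

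For the base case $n=1$, the strict component $l^1\colon N\to L$ is produced by the model structure on $\op{Mod}(A)$: in the square formed by $g^1$, $\psi^1$, $f$, $\phi$, the map $f$ is a cofibration and $\phi$ a fibration of $A$-modules, one of them a weak equivalence, so a diagonal filler exists.

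For the inductive step, suppose $l^1,\ldots,l^{n-1}$ have already been built with the required properties. I would define a provisional $\tilde l^n$ by choosing, on each generating tuple of $\op{Sym}^n_A(N[1])$ involving at least one $A$-cell $x_i\in N\setminus M$, a $\phi^1$-preimage of the value prescribed by $\psi^n$ (available since $\phi^1$ is surjective), extending $A$-multilinearly, and setting $\tilde l^n=g^n$ on $M^{\odot n}$. The obstruction $\eta=[D^1,\tilde l^n]-R^n$, where $R^n$ collects the lower-order terms dictated by the weight-$n$ $L_\infty$-morphism relation, is then a cycle in $\op{Hom}_A(\op{Sym}^n_A(N[1]),L[1])$ that vanishes on $M^{\odot n}$ (by the $L_\infty$-relations for $g$) and satisfies $\phi^1\circ\eta=0$ (by those for $\psi$); these are the same consequences of the Jacobi-type identity for the Nijenhuis--Richardson bracket exploited in Lemma~\ref{lemma18}.

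To kill $\eta$ I reuse the homotopy-fiber diagram
\begin{center}
\begin{tikzcd}
\op{Ker}_u \arrow[r, hook]\arrow[d] & \op{Hom}_A(N^{\odot n},L)\arrow[r, two heads] \arrow[d, "\phi^1\circ -"] & \op{Hom}_A(M^{\odot n},L) \arrow[d, "\phi^1\circ -"] \\
\op{Ker}_d \arrow[r, hook] & \op{Hom}_A(N^{\odot n},K) \arrow[r, two heads] & \op{Hom}_A(M^{\odot n},K).
\end{tikzcd}
\end{center}
When $\phi$ is a weak equivalence, the rightmost vertical is a quasi-isomorphism; when $f$ is a weak equivalence, the functor $\op{Sym}^n_A$ preserves quasi-isomorphisms in characteristic zero, so $\op{Ker}_u$ and $\op{Ker}_d$ are individually acyclic. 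In both cases $\op{Ker}_u\to\op{Ker}_d$ is a quasi-isomorphism, and the adjustment argument of Lemma~\ref{lemma18} produces $\Delta\in\op{Ker}_u$ with $[\Delta,D^1]=\eta$ and $\phi^1\circ\Delta=0$; then $l^n:=\tilde l^n-\Delta$ completes the step. The main obstacle will be the sign-and-shuffle bookkeeping needed to verify that $\eta$ really is a $D^1$-cycle in the present situation, which is essentially a rerun of the combinatorics already executed in Lemma~\ref{lemma18}.
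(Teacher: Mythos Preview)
Your proposal is correct and follows essentially the same inductive obstruction-theoretic strategy as the paper: reduce to finding $A$-multilinear $l^n$ satisfying the weight-$n$ $L_\infty$ relation together with the two boundary constraints $l^n\circ f^{\odot n}=g^n$ and $\phi^1\circ l^n=\psi^n$, then kill the obstruction using the case split on whether $f$ or $\phi$ is the weak equivalence. The only organizational difference is that the paper does not build a provisional $\tilde l^n$ but instead packages the two constraints into a $0$-disk pullback $\op{Hom}_{A/\psi}(N^{\odot n},L)$ and a homotopy fiber over $A\langle g^n,[g^n,D^1]\rangle$, showing directly that the right-hand side of the $L_\infty$ relation bounds there; your provisional-then-correct formulation (borrowed from Lemma~\ref{lemma18}) is equivalent once one checks, as the paper does explicitly, that the relevant comparison map is not only a quasi-isomorphism but also degreewise surjective.
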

\begin{proof}
Observe that  it suffices to find a morphism $l=(l^n)_n$ of $L_{\infty}$-algebras with each $l^n$  $A$-multilinear, such that $g=l\circ f$ and $\psi=\phi\circ l$. Indeed, the equality $\psi=\phi\circ l$ implies compatibility with the anchor. We proceed by induction. $l^1:N\to L$ is given by the lifting property for $A$-modules. Assuming $l^k$ is defined for $k<n$, all that remains is to find $l^n:\op{Sym}_A^nN\to L$ such that
\begin{equation}\label{e1}[l^n,D^1]=-\frac{1}{2}\sum_{k=2}^{n-1}[l^k,D^{n-k+1}],\end{equation}
\begin{equation}\label{e2}\phi\circ l^n=\psi^n,\end{equation}
and
\begin{equation}\label{e3}l^n\circ f^{\odot n}=g^n.\end{equation}
Observe first that the right-hand-side of equation (\ref{e1}) is a cycle in $\op{Hom}_A(\op{Sym}_A^nN, L)$. Indeed, the commutator $[l^k,d^{n-k+1}]$ is $A$-linear by direct verification, and
\begin{equation*}
\begin{split}
\sum_{k=2}^{n-1}[[l^k,D^{n-k+1}],D^1]&=-\sum_{k=2}^{n-1}[[l^k,D^1],D^{n-k+1}]-\sum_{k=2}^{n-1}[D^{n-k+1},D^1],l^k]\\
&=\sum_{k=2}^{n-1}\sum_{l=2}^{k-1}[[l^l,D^{k-l+1}],D^{n-k+1}]+\frac{1}{2}\sum_{k=2}^{n-1}\sum_{l=2}^{n-k}[[D^{l},D^{n-k-l}],l^k]\\
&=-\frac{1}{2}\sum_{k=2}^{n-1}\sum_{l=2}^{n-k}[D^{l},D^{n-k-l}],l^k]+\frac{1}{2}\sum_{k=2}^{n-1}\sum_{l=2}^{n-k}[D^{l},D^{n-k-l}],l^k]=0.
\end{split}
\end{equation*}
Maps of $A$-modules
 $${f}^{{\odot}n}\circ-:\op{Hom}_A(N^{\odot n},K)\to \op{Hom}_A(M^{\odot n},K);\hspace{10pt}{f}^{{\odot}n}\circ-:\op{Hom}_A(N^{\odot n},L)\to \op{Hom}_A(M^{\odot n},L),$$
are trivial fibrations in the first case and fibrations in the second. Similarly, 
$$-\circ\phi:\op{Hom}_A(N^{\odot n},L)\to \op{Hom}_A(N^{\odot n},K);\hspace{10pt}-\circ\phi:\op{Hom}_A(N^{\odot n},L)\to\op{Hom}_A(N^{\odot n},K)$$
are fibrations in the first case and trivial fibrations in the second. Denote by
$A\langle \psi^n, [\psi^n,D^1]\rangle$
the $0$-disk in the category of graded $A$-modules: the free graded
$A$-module generated by an element $\psi^n$ in degree $0$ and an element
$d\psi^n$ in degree $1$, with differential $d(\psi^n)=[\psi^n,D^1]$.
There is a canonical  morphism of complexes
$$
A\langle \psi^n, [\psi^n,D^1]\rangle \longrightarrow \operatorname{Hom}_A(N^{\odot n}, K),
$$
determined by sending the generators $\psi^n$ and $[\psi^n,D^1]$ to the corresponding
elements of $\op{Hom}_A(N^{\odot n}, K)$. The universal map between (homotopy) pullbacks
\begin{center}
\begin{tikzcd}
\op{Hom}_{A/\psi}(N^{\odot n},L)\arrow[r]\arrow[d, two heads]&\op{Hom}_A(N^{\odot n},L)\arrow[d, two heads,swap, "-\circ\phi"] & \op{Hom}_{A/\psi \circ f}(M^{\odot n},L)\arrow[r]\arrow[d, two heads]&\op{Hom}_A(M^{\odot n},L)\arrow[d, two heads,swap, "-\circ\phi"]  \\
A\langle {\psi}^n,[\psi^n,D^1]\rangle   \arrow[r]&\op{Hom}_A(N^{\odot n},K) & A\langle {\psi}^n\circ f,[\psi^n\circ f,D^1]\rangle   \arrow[r]&\op{Hom}_A(M^{\odot n},K),
\end{tikzcd}
\end{center}
denoted by $u$, is a quasi-isomorphism: if $f$ is a weak equivalence, it is the map between quasi-isomorphic homotopy pullback squares (model squares in \cite{GPP}); and if $\phi$ is a weak equivalence, it is a map between acyclic $A$-modules. We wish to prove that it is also a fibration (degree-wise surjective).
As a graded module, denote $M=A\otimes_k V_M$ for a graded vector space $V_M$. As a relative cell complex, $f$ is the map $A\otimes_k \imath: A\otimes_k V_M\to A\otimes_k V_N$, for an inclusion $\imath:V_M\hookrightarrow V_N$ of graded vector spaces.  Any surjection of graded vector spaces splits. Denoting $S=\op{Ker} \phi$, as a graded vector space, $L\cong S\oplus K$. As graded vector spaces, 
$$\op{Hom}_{A/\psi}(N^{\odot n},L)\cong\op{Hom}_{k}({V_N}^{\odot n},S)\oplus A\langle {\psi}^n,[\psi^n,D^1]\rangle,$$
and $$\op{Hom}_{A/\psi\circ f}(M^{\odot n},L)\cong\op{Hom}_{k}({V_M}^{\odot n},S)\oplus A\langle {\psi}^n\circ f,[\psi^n\circ f,D^1]\rangle  .$$
The universal  map 
$$u:\op{Hom}_{A/\psi}({N}^{\odot n},L)\to\op{Hom}_{A/\psi\circ f}({M}^{\odot n},L)$$
under the above isomorphisms splits into the surjection $-\circ\imath$ and identity on the zero-disk.

Composing the right-hand side of (\ref{e1}) with ${f}^{\odot n}$ gives
$$-\frac{1}{2}\sum_{k=2}^{n-1}[g^k,D^{n-k+1}]=[{g^n},D^1].$$  The homotopy fiber
\begin{center}
\begin{tikzcd}
\op{Fib}\arrow[r]\arrow[d, two heads,"\sim"]&\op{Hom}_{A/\psi}(N^{\odot n},L)\arrow[d, two heads,"\sim"]\arrow[d, two heads,swap, "u"]\\
A\langle g^n,[g^n,D^1]\rangle   \arrow[r]&\op{Hom}_{A/\psi \circ f}(M^{\odot n},L)
\end{tikzcd}
\end{center}
is acyclic. As the left-hand side of (\ref{ld}) is a cycle in the homotopy fiber which maps by the vertical arrow to $[g^n,D^1]$, it is the image under the differential of a derivation $A$ to ${M}^{{\odot n}}$ denoted by ${l}^n$. Clearly, it satisfies equations (\ref{e2}) and (\ref{e3}).

\end{proof}
Next we move to general morphisms of SH LR[1] pairs. Fibrations of SH LR[1] pairs in ${\tt SHLR}^{\op{cof}}(k)$ are that of the category fibrant objects \cite{P}.
\begin{defi}
Lets $(A,M)$ and $(B,N)$ be SH LR[1] pairs with $M\in {\tt Mod}(A)$, and $N\in {\tt Mod}(B)$ cell complexes. A strict morphism $(f_0:B\to A, f^1:M\to A\otimes_BN)$ of SH LR[1] pairs is called
\begin{enumerate}
\item a \emph{cofibration} if $f_0$ is degreewise surjective, and $f^1$ a relative cell complex of $A$-modules; and
\item a \emph{fibration} if $f_0$ is a relative cell complex of cdga-s, and $f^1$ is a projection of cell complexes
$$f^1:(A  \langle v_i\rangle_{i\in I},d_V)\twoheadrightarrow(A \langle v_j\rangle_{j\in J},d_V)$$
with  $J\subset I$, and $f(v_i)=\begin{cases} v_i &\text{ for } i\in J\\ 0 &\text{ for } i\in I\setminus J\end{cases}$.
\end{enumerate}
\end{defi}
\begin{rem} All the results in the section remain valid if cofibrantions and fibrations are defined as retracts of the maps above. However, the proofs become more involved.
\end{rem}

We first construct a replacement of a SH LR[1] pair $(A,M)$ with arbitrary $A$ and a cell complex $M$ by an element in ${\tt SHLR}^{\op{cof}}(k)$. By \cite[Lemma 26]{P}, given a cdga $A$, a cell complex $M$ in the category of $A$-modules, and a cofibrant replacement $r_A:QA\to A$ in the category of cdga-s, there exists a cofibrant replacement by a cell complex $p_M:QM\to M$ in the category of $QA$-modules, such that $M=A\otimes_{QA} QM.$ 
\begin{lemma}\label{lemma20}
With $r_A:QA\to A$, and $p_M:QM\to M$ as above, there exists a SH LR[1] structure on $(QA,QM)$ such that $(r_A,\op{id}):(A,M)\to(QA,QM)$ is a fibration of SH LR[1] pairs.
\end{lemma}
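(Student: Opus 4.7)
The plan is to mimic the obstruction-theoretic construction of Lemma \ref{lemma18}, but now building the SH LR[1] structure on $(QA,QM)$ \emph{weight by weight}, using the given structure on $(A,M)$ as the target of a strict lift. Equivalently, one lifts the Chevalley--Eilenberg differential $d^A_{CE}$ on $\widehat{\op{Sym}}_A(M^*)$ to a square-zero derivation $d^{QA}_{CE}$ on $\widehat{\op{Sym}}_{QA}(QM^*)$ along the canonical surjection of pro-algebras induced by $r_A$ and the identification $A\otimes_{QA}QM=M$. In terms of multiderivations this amounts to finding $(D^k_{QA},\rho^{k-1}_{QA})_{k\geq 1}$ on $(QA,QM)$ such that $\rho^0_{QA}=d_{QA}$, $D^1_{QA}=d_{QM}$, the family satisfies Maurer--Cartan, and for every $k$ the two squares
$$r_A\circ\rho^{k-1}_{QA}(qm_1\odot\cdots\odot qm_{k-1})=\rho^{k-1}_A(p_M(qm_1)\odot\cdots\odot p_M(qm_{k-1}))\circ r_A,$$
$$p_M\circ D^k_{QA}(qm_1\odot\cdots\odot qm_k)=D^k_A(p_M(qm_1)\odot\cdots\odot p_M(qm_k))$$
commute. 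These commutations are precisely the strictness of $(r_A,\op{id})$ as a SH morphism.

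The inductive step proceeds as in Lemma \ref{lemma18}. Assume $(D^i_{QA},\rho^{i-1}_{QA})$ have been constructed for $i<k$ so as to satisfy the Maurer--Cartan relations up to weight $k-1$ and to intertwine the given structure on $(A,M)$. To produce $\rho^{k-1}_{QA}$ and $D^k_{QA}$, first choose any $QA$-multilinear provisional lifts $\tilde\rho^{k-1}$ and $\tilde D^k$ of $\rho^{k-1}_A$ and $D^k_A$: this is possible because $\op{Sym}^{k-1}_{QA}(QM[1])$ and $\op{Sym}^k_{QA}(QM[1])$ are cofibrant $QA$-modules, while the post-composition maps
$$r_A\circ-:\op{Der}(QA)\to\op{Der}(QA,A),\qquad p_M\circ-:QM\to M$$
are trivial fibrations of dg $QA$-modules (the first since $r_A$ is a trivial fibration and $\op{Der}(QA)$ is a genuine $QA$-module because $QA$ is semi-free; the second by construction of $p_M$). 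The lifting is then a standard application of the model structure on $\op{Mod}(QA)$ in the Hom-complexes $\op{Hom}_{QA}(\op{Sym}^{k-1}_{QA}(QM[1]),\op{Der}(QA))$ and $\op{Hom}_{QA}(\op{Sym}^k_{QA}(QM[1]),QM)$, computed relative to the Hom-complexes over $A$ via the base-change adjunction.

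The provisional lifts $(\tilde\rho^{k-1},\tilde D^k)$ need not satisfy Maurer--Cartan at weight $k$. Let $\eta$ denote the weight-$k$ component of $\rho\circ D+\tfrac12[\rho,\rho]$ and of $[D,D]$ with the provisional maps inserted. By the same computation as in Lemma \ref{lemma18}, the lower-order Maurer--Cartan relations together with graded Jacobi imply that $\eta$ is a cycle in the appropriate Hom-complex, and by induction $\eta$ maps to zero in the corresponding complex over $(A,M)$. Hence $\eta$ lives in the kernel of $r_A\circ-$ (respectively $p_M\circ-$), which is acyclic since $r_A$ and $p_M$ are trivial fibrations; consequently $\eta$ is a boundary $[\Delta,D^1]$ for some $\Delta$ in the kernel. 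Replacing $\tilde\rho^{k-1}$ and $\tilde D^k$ by $\tilde\rho^{k-1}+\Delta_\rho$ and $\tilde D^k+\Delta_D$ preserves the strict compatibility with $(A,M)$ (since $\Delta$ is in the kernel) and kills the weight-$k$ Maurer--Cartan failure.

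The main obstacle is therefore purely bookkeeping: the base cdga changes along $r_A$, so one must carefully work in the Hom-complexes over $QA$ rather than over $A$, and invoke base change to identify the obstructions in the way outlined above. Once the induction is complete, the resulting multiderivations endow $(QA,QM)$ with a SH LR[1] structure and $(r_A,\op{id})$ with the structure of a strict SH LR[1] morphism. It is a fibration in the sense of the definition since $r_A$ is a relative cell cdga extension by construction of the cofibrant replacement, and $\op{id}_M:M\to M$ vacuously fits the prescribed form of a projection of cell complexes with $J=I$.
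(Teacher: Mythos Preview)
Your approach is essentially the paper's: build the anchor and the $L_\infty[1]$ brackets weight by weight, at each stage choosing a provisional lift along the trivial fibrations $\op{Der}(QA)\twoheadrightarrow\op{Der}(QA,A)$ and $QM\twoheadrightarrow M$, then killing the weight-$k$ Maurer--Cartan defect by a correction in the acyclic kernel. The paper does the anchors all at once first and then the brackets inductively, but this is cosmetic.

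Two points need tightening. First, you cannot take $\tilde D^k$ as a $QA$-\emph{multilinear} lift of $D^k_A$ in $\op{Hom}_{QA}(\op{Sym}^k_{QA}(QM[1]),QM)$: the map $D^k_A$ is not $A$-multilinear, it obeys the Leibniz rule against $\rho^{k-1}_A$. The paper resolves this by lifting only the values $D^k_A(m_{i_1}\odot\cdots\odot m_{i_k})$ on generators along $p_M$ and then \emph{extending via the Leibniz formula} using the already-constructed $\rho^{k-1}_{QA}$; the resulting obstruction $\eta$ is then genuinely $QA$-multilinear and the correction step proceeds as you describe. Your sketch should be adjusted accordingly.

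Second, your last sentence is incorrect: $r_A:QA\to A$ is the trivial fibration half of a cofibrant replacement, not a relative cell extension (the cell extension is $k\hookrightarrow QA$). What you can legitimately check is that $r_A$ is degreewise surjective and $\op{id}_M$ is trivially a relative cell complex, so $(r_A,\op{id})$ is a (trivial) \emph{cofibration} in the sense of the paper's definition for SH LR[1] pairs; this is the property actually used downstream in the lifting arguments of Section~\ref{BV}.
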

\begin{proof}
As $QA$ is cofibrant, so is its module of K\"ahler differentials. Consequently, the functor $\op{Der}(QA,-)=\op{Hom}_{QA}(\Omega^1(QA),-)$ preserves trivial fibrations. Denoting by $(\Gamma^k_A)_k$ the anchor in $(A,M)$, anchor in $(QA,QM)$ is given by the lifting property in ${\tt Mod}(QA)$
\begin{center}
\begin{tikzcd}
&&\arrow[d,two heads, "\sim"] \op{Der}(QA,QA)[1]\\
QM^{\odot k}\arrow[urr,dashed,"\Gamma^k_{QA}"]\arrow[r,"p_M"]&M^{\odot k}\arrow[r,"\Gamma^k_{A}"]&\op{Der}(QA,A)[1].
\end{tikzcd}
\end{center}
It suffices to find an $L_{\infty}[1]$ algebra structure $(D^k_{QM})_k$ on $QM$ such that 
\begin{equation}\label{blablabla}
p_{M}\circ D^k_{QM}=D^k_{M}\circ p_{M}^{\odot k}.
\end{equation}

We proceed by induction. Assume $D^k$ have been defined for $k<n$.  
As graded modules, $M=A\langle m_i\rangle_{i\in I}$, and $QM=QA\langle m_i\rangle_{i\in I}$ for a well ordered set $I$. For each monomial $m_1\odot\cdots\odot m_n\in M^{\odot n}$, fix an $\overline{m}_{m_1\ldots i_n}\in QM$ such that
$p_M(\overline{m}_{m_1\ldots i_n})=D^n_M(m_1\odot\cdots\odot m_n),$
and define, for $\rho$ given by the Equation \ref{SHLRprop}, a provisional differential $\tilde{D}^n_{QM}:QM^{\odot n}\to QM$
by
\begin{equation*}
\begin{split}
\tilde{D}^{n}_{QM}&(a_1m_1,\ldots,a_{n}m_{n})=(-1)^{\sum_{i=1}^{n}|a_i|(1+|m_1|+\ldots+|m_{i-1}|)}a_1\cdots a_{n}\overline{m}_{m_1\dots m_{n}}\\
+&\sum_{k=1}^{n}(-1)^{{\sum_{i=1}^{n}|a_i|(1+|m_1|+\ldots+|m_{i-1}|)}+|m_k|(|m_{k+1}|+\ldots+|m_{n}|)+|a_k|(1+|a_{k+1}|+\ldots+|a_{n}|+|m_1|+\ldots+|m_{n}|)}\\
&a_1\cdots\hat{a}_k\cdots a_{n} \rho^k_{QA}(m_1\odot\cdots\odot \hat{m}_k\odot\cdots\odot m_{n})(a_k)\cdot m_k
\end{split}
\end{equation*}

Similarly as before,
\begin{equation}\label{aublablabla} [\tilde{D}^n_{QM},D^1_{QM}]+\frac{1}{2}\sum_{k=2}^{n-1}[D^k_{QM},D^{n-k+1}_{QM}] \end{equation}
is $A$-multilinear, and a cycle in $\op{Hom}_{QA}(QM^{\odot n}, QM),$ whose composition with $p_M$ is zero. Clearly, $\tilde{D}^{n}$ satisfies the condition (\ref{blablabla}). It remains to show that the expression (\ref{aublablabla}) is equal as a morphism of graded $QA$-modules to a boundary $[\Delta,D^1_{QM}]$ with $p\circ \Delta=0$. In this case ${D}^{n}_{QM}:=\tilde{D}^{n}_{QM}-\Delta$ is a correct extension of the $L{_\infty}$-structure to weight $n$.

As the (homotopy) fiber 
\begin{center}
\begin{tikzcd}
\op{Fib}\arrow[r,hook]\arrow[d, two heads,"\sim"]\arrow[d, two heads,swap, "p\circ-"]&\op{Hom}_{QA}(QM^{\odot n},QM)\arrow[d, two heads,"\sim"]\arrow[d, two heads,swap, "{p_M}\circ-"]\\
0   \arrow[r,hook]&\op{Hom}_{QA}(QM^{\odot n},M)
\end{tikzcd}
\end{center}
is acyclic, and the expression (\ref{aublablabla}) is its cycler, it is a boundary.

\end{proof}

Lifting properties happen to be satisfied in the general setting as well:
\begin{lemma}\label{lemma21}
Let $(A,L)$, $(B,M)$, $(C,N)$, and $(D,K)$ be SH LR[1] pairs. Assume that $D\in{\tt cdga}(k)$ is semi-free; that $L\in \op{Mod}(A)$, $M\in {\tt Mod}(B)$, $N\in {\tt Mod}(C)$ and $K\in {\tt Mod}(D)$ are cell complexes. Let $f:(B,M)\to (C,N)$ be a cofibration, and, $\phi:(A,L)\to(D,K)$ a fibration. Let $g:(B,M)\rightsquigarrow(A,L)$
and $\psi:(C,N)\to (D,K)$ be SH morphisms. If either $f$ or $\phi$ is a weak equivalence, there exists a SH morphism $l:(C,N)\rightsquigarrow(A,L)$ such that $g=l\circ f$, and $\psi=\phi\circ l$.
\begin{center}
\begin{tikzcd}
(B,M)\arrow[d, hook,"f"]\arrow[d, hook,swap,"\sim"]\arrow[rr,rightsquigarrow,"g"] &  &(A,L)\arrow[d, two heads,"\phi"] & &
(B,M)\arrow[d, hook,"f"]\arrow[rr,rightsquigarrow,"g"] &  &(A,L)\arrow[d, two heads,"\phi"]\arrow[d, two heads,swap,"\sim"]\\
(C,N)\arrow[urr,dashed,rightsquigarrow,"l"]\arrow[rr,rightsquigarrow,"\psi"]& &(D,K);& &
(C,N)\arrow[urr,dashed,rightsquigarrow,"l"]\arrow[rr,rightsquigarrow,"\psi"]& &(D,K).
\end{tikzcd}
\end{center}
\end{lemma}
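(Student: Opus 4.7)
The plan is to combine a classical cdga-level lift with the inductive weight-by-weight construction of Lemma \ref{lemma19}, bridging the two via the adjunction of Theorem \ref{theo2}. \textbf{Step 1 (cdga-level lift).} The underlying square on cdga's is
\[
\begin{tikzcd}
D \arrow[r,"\phi_0"]\arrow[d,"\psi_0"'] & A \arrow[d,"g_0"]\\
C \arrow[r,"f_0"'] \arrow[ur,dashed,"l_0"] & B.
\end{tikzcd}
\]
By definition, $\phi_0$ is a relative cell complex of cdga's and $f_0$ is degreewise surjective; if either $\phi$ or $f$ is a weak equivalence of SH LR pairs, the corresponding cdga map is also a quasi-isomorphism. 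Standard lifting in the Quillen model structure on ${\tt cdga}(k)$ produces $l_0:A\to C$ making both triangles commute.

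\textbf{Step 2 (reduction to the linear case).} Using the adjunction $\imath\dashv(-,\overline{\,\cdot\,})$ of Theorem \ref{theo2}, the general morphisms $g$ and $\psi$ correspond uniquely to \emph{linear} SH morphisms $\tilde g:(B,M)\rightsquigarrow(A,\overline L)$ and $\tilde\psi:(C,N)\rightsquigarrow(D,\overline K)$ with $g=\epsilon_L\circ\tilde g$ and $\psi=\epsilon_K\circ\tilde\psi$. The strict (hence linear) fibration $\phi$ is carried by functoriality to $\overline\phi:(A,\overline L)\to(D,\overline K)$. Naturality of the counit and uniqueness of the adjoint factorization give $\overline\phi\circ\tilde g=\tilde\psi\circ f$. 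The cofibration property of $f$ is unchanged; the fibration property of $\overline\phi$ follows from the explicit description of $(A,\overline L)$ and $(D,\overline K)$ via generators in $L^{*}\oplus\Omega_A^{*}\oplus\Omega_A^{*}[-1]$ and $K^{*}\oplus\Omega_D^{*}\oplus\Omega_D^{*}[-1]$ (the functor just projects the added generators compatibly with the cell structure); and the weak-equivalence property transfers by 2-out-of-3, since $\epsilon_L,\epsilon_K$ are weak equivalences. Once a linear lift $\tilde l:(C,N)\rightsquigarrow(A,\overline L)$ of the new square is produced, set $l:=\epsilon_L\circ\tilde l$; the identities $g=l\circ f$ and $\psi=\phi\circ l$ follow by composing with $\epsilon$ and invoking naturality together with uniqueness of the adjoint factorization.

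\textbf{Step 3 (inductive linear lift over $l_0$).} With $l_0$ fixed, build the components $\tilde l^{n}$ by induction on weight, mimicking the scheme of Lemma \ref{lemma19}. For $n=1$, $\tilde l^{1}:N\to C\otimes_A\overline L$ comes from a classical lifting diagram in ${\tt Mod}(C)$, using that $f^{1}$ is a relative cell complex and $\overline\phi^{1}$ a projection of cell complexes, one of them acyclic. Assuming $\tilde l^{\leq n-1}$ has been constructed, the Maurer--Cartan obstruction $\tfrac12\sum_{k=2}^{n-1}[\tilde l^{k},D^{n-k+1}]$ is a cycle in $\op{Hom}_C(\op{Sym}_C^{n}N,\, C\otimes_A\overline L)$ (the target regarded as a $C$-module via $l_0$), whose restriction along $f^{\odot n}$ and whose postcomposition with $\overline\phi^{1}$ are exact by the already-fixed data $\tilde g$ and $\tilde\psi$. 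In the commutative square of Hom-complexes
\[
\begin{tikzcd}[column sep=small]
\op{Hom}_C(\op{Sym}_C^{n}N, C\otimes_A\overline L) \arrow[r,"-\circ\overline\phi^{1}"] \arrow[d,"-\circ f^{\odot n}"'] & \op{Hom}_C(\op{Sym}_C^{n}N, C\otimes_D\overline K) \arrow[d] \\
\op{Hom}_B(\op{Sym}_B^{n}M, B\otimes_A\overline L) \arrow[r] & \op{Hom}_B(\op{Sym}_B^{n}M, B\otimes_D\overline K),
\end{tikzcd}
\]
one leg is a trivial fibration (depending on which of $f,\overline\phi$ is the weak equivalence), and the acyclic homotopy fiber yields the desired $\tilde l^{n}$ with the required restrictions.

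\textbf{Main obstacle.} The delicate point of Step 3 is the varying base cdga: the splittings in Lemma \ref{lemma19} are $A$-module splittings over a fixed $A$, whereas here the four corners of the Hom-complex square live over different cdga's. One must base-change everything to a common cdga (here $C$, via $l_0$, $g_0$, and $\psi_0$) and split the tensor products $C\otimes_A\overline L$, $B\otimes_A\overline L$, etc.\ into their ``old'' and ``new'' cell generators, so that the map between homotopy fibers can be identified as a trivial fibration. Once this bookkeeping is in place, the Nijenhuis--Richardson / Jacobi identity calculation verifying that the obstruction is a cycle, and the acyclic-fiber argument producing the boundary, proceed as in Lemma \ref{lemma19}.
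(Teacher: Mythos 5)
Your Step 2 reduction via the adjunction of Theorem \ref{theo2} is an attractive reorganization that the paper does not use, and the formal bookkeeping there (commutativity of the transformed square, recovery of $l=\epsilon_L\circ\tilde l$ from naturality and uniqueness of adjoint factorizations, transfer of the weak-equivalence hypothesis by 2-out-of-3) does check out; even the claim that $\overline\phi$ remains a fibration is plausible, because $\phi_0$, being a relative cell complex, sends generators to generators, so $\overline\phi$ stays strict and projects the added $\xi,c$-generators onto each other. The genuine gap is Step 3. You assert that the linear lift $\tilde l:(C,N)\to(A,\overline L)$ can be built ``mimicking the scheme of Lemma \ref{lemma19},'' but Lemma \ref{lemma19} is a fixed-base statement, and over a varying base the structure equation for a linear SH morphism is Definition \ref{defLin}(2), not the Maurer--Cartan equation of an $L_\infty$-morphism. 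Concretely: writing $\tilde l^{n-k}(m_{\ldots})=\sum_\alpha a_\alpha\otimes n_\alpha$ with $a_\alpha\in C$, condition (2) contains the terms $\rho^k(m_{\sigma(1)\ldots\sigma(k)})(a_\alpha)\otimes n_\alpha$ for $k\geq 1$, in which the anchor of $N$ acts on the coefficients produced by the lower-weight components. Your stated obstruction $\tfrac12\sum_{k=2}^{n-1}[\tilde l^{k},D^{n-k+1}]$ omits these terms; without them the obstruction is not even $C$-multilinear (the non-multilinearity of $D^{k}$ for $k\geq 2$ is exactly compensated by the anchor terms), so the cycle-in-an-acyclic-fiber argument cannot be run on the complex $\op{Hom}_C(\op{Sym}_C^nN,C\otimes_A\overline L)$ as written. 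In addition, condition (1) of Definition \ref{defLin} (anchor compatibility of the coalgebra extension of $\tilde l$) is no longer a consequence of $\overline\phi\circ\tilde l=\tilde\psi$, since $\op{Der}(A)\to\op{Der}(D,A)$ is not injective; it has to be arranged at every stage of the induction, and your proposal never addresses it.

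Your ``Main obstacle'' paragraph correctly identifies the varying-base bookkeeping as the hard point, but then defers it with ``once this is in place \ldots{} the argument proceeds as in Lemma \ref{lemma19}.'' That deferred content is essentially the entire proof in the paper: the paper works dually on the Chevalley--Eilenberg pro-algebras, splits each weight-$n$ component of $l^*$ (both on $A$ and on $L^*$) into a derivation part plus an $\epsilon$-part determined by lower weights, identifies $\op{Der}_{A,n}(\widehat{\op{Sym}}_AL^*,\widehat{\op{Sym}}_CN^*)\cong\op{Hom}_C(\op{Sym}_C^nN,C\otimes_AL)$, and proves by hand (using the semi-freeness of $D$ and the explicit cell structure of the fibration) that the comparison maps of the relevant homotopy pullbacks of derivation complexes are surjective trivial fibrations. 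Working dually is also what makes the anchor and Leibniz constraints automatic, which is precisely what your component-wise formulation loses. As it stands, the proposal is a strategy outline with an incorrect obstruction formula at its core, not a proof; note also that the paper constructs a \emph{general} (non-linear) lift directly, handling the higher components of $l^*|_A$ that your linear reduction trades for the fattened target $\overline L$ --- either route must eventually pay for the varying base, and that payment is what is missing here.
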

\begin{proof}
Dually, construct $l^*$ in
\begin{center}
\begin{tikzcd}
 (\widehat{\op{Sym}}_D K^*)  \arrow[rr,rightsquigarrow,"\psi^*"]\arrow[d, hook,"\phi^*"]&  &(\widehat{\op{Sym}}_C N^*) \arrow[d, two heads,"f^*"]\arrow[d, two heads,swap,"\sim"]&&
 (\widehat{\op{Sym}}_D K^*)  \arrow[rr,rightsquigarrow,"\psi^*"]\arrow[d, hook,swap,"\sim"]\arrow[d, hook,"\phi^*"]&  &(\widehat{\op{Sym}}_C N^*) \arrow[d, two heads,"f^*"]\\
\arrow[rr,rightsquigarrow,"g^*"] (\widehat{\op{Sym}}_A L^*)\arrow[urr,dashed,rightsquigarrow,"l^*"]& & (\widehat{\op{Sym}}_B M^*),
&& \arrow[rr,rightsquigarrow,"g^*"] (\widehat{\op{Sym}}_A L^*)\arrow[urr,dashed,rightsquigarrow,"l^*"]& & (\widehat{\op{Sym}}_B M^*)
\end{tikzcd}
\end{center}
by induction on weight. In weight zero, restriction $l^*_0|_A:A\to C$, denoted by $l_0$, such that $ f_0\circ l_0=g_0$ and  $l_0\circ \phi_0=\psi_0$, is given by the lifting property in ${\tt cdga}(k)$. Restriction of $l^*_0$ to $L^*$, dually a morphism of $B$-modules $l^1:N\to C\otimes_A L$, is constructed within the step of induction as the special case $n=0$.

Assume $l^*$ has been defined up to weight $n-1$. 
As a semi-free cdga,  $D=(\op{Sym}_k V_D,d_D)$, for a graded vector space $V_D$. As a relative cell complex, $\phi_0$ is the inclusion $\op{Sym}_k V_D\hookrightarrow \op{Sym}_k V_A$, for an inclusion of graded vector spaces $V_D\subseteq V_A$

A map from a graded algebra to a pro-graded algebra is uniquely determined by its value in the limit, as
$$\op{Hom}_{\op{Pro}\mathcal{C}}(X,(Y_i)_{i\in I})=\varprojlim_{i\in I}\op{Hom}_{\mathcal{C}}(X,Y_i)=\op{Hom}_{\mathcal{C}}(X,\varprojlim_{i\in I}Y_i).$$ 
With this, any extension of $l^*$ to weight $n$ (for $n>0$), for which $\sum_{k\leq n}{l^*}_k$ remains multiplicative, requires the weight $n$-component ${l^*}_n|_A:A\to (\op{Sym}_C^n N)^*$ to be the sum of an arbitrary derivation valued in $(\op{Sym}_C^n N)^*$ and a fixed term completely determined by ${l^*_k}$, for $k<n$. Indeed, with the $A$-module structure on $(\op{Sym}_C^n N)^*$ determined by  $l_0:A\to C$,
\begin{equation*}
\begin{split}
l^*_n(v_1\cdots v_s)&=\sum_{k=1}^sv_1\cdots l^*_n(v_k)\cdots v_s+\sum_{\substack{n_1+\ldots+n_s=n\\n_k<n}}l^*_{n_1}(v_1)\cdots l^*_{n_s}(v_s)\\
&=:{l^*_n}^{\op{Der}}(v_1\cdots v_s)+{l^*_n}^{\epsilon}(v_1\cdots v_s).
\end{split}
\end{equation*}
Similarly,
$g^*_n|_A={g^*_n}^{\op{Der}}+{g^*_n}^{\epsilon}$, and $\psi^*_n|_A={{\psi^*_n}^{\op{Der}}}+{\psi^*_n}^{\epsilon}.$ 
Observe that $f^*\circ{l^*_n}^{\epsilon}={{g^*}}_n^{\epsilon}$, and ${l^*_n}^{\epsilon}\circ \phi_0={\psi^*_n}^{\epsilon}$. All that is left is to find ${l^*_n}^{\op{Der}}$, such that
\begin{equation}\label{ld}
[{l^*_n}^{\op{Der}},d_0]=-\sum_{k=1}^{n-1}[l_k^*,d_{n-k}]-[{l^*_n}^{\epsilon},d_0],
\end{equation}
\begin{equation}\label{lg}
{f^*}\circ{l^*_n}^{\op{Der}}={g^*_n}^{\op{Der}},
\end{equation}
and
\begin{equation}\label{lh}
{l^*_n}^{\op{Der}}\circ \phi_0={\psi^*_n}^{\op{Der}}.
\end{equation}

To shorten the notation, denote ${N^*}^{\hat{\odot}n}:=(\op{Sym}_C^n N)^*$. By the square zero property of the differential on $\op{Hom}_k(A,{N^*}^{\hat{\odot}n})$, $[[{l^*_n}^{\epsilon},d_0],d_0]=0$. Moreover,
\begin{equation*}
\begin{split}
\sum_{k=1}^{n-1}[[l_k^*,d_{n-k}],d_0]&=-\sum_{k=1}^{n-1}[[l_k^*,d_0],d_{n-k}]-\sum_{k=1}^{n-1}[d_{n-k},d_0],l_k^*]\\
&=\sum_{k=1}^{n-1}\sum_{l=1}^{k-1}[[l_l^*,d_{k-l}],d_{n-k}]+\frac{1}{2}\sum_{k=1}^{n-1}\sum_{l=1}^{n-k-1}[d_{l},d_{n-k-l}],l_k^*]\\
&=-\frac{1}{2}\sum_{k=1}^{n-1}\sum_{l=1}^{n-k-1}[d_{l},d_{n-k-l}],l_k^*]+\frac{1}{2}\sum_{k=1}^{n-1}\sum_{l=1}^{n-k-1}[d_{l},d_{n-k-l}],l_k^*]=0.
\end{split}
\end{equation*}
This shows that the right-hand side of the equation (\ref{ld}) is a cycle in $\op{Der}(A,{N^*}^{\hat{\odot} n})$.
Maps  $${f^*}\circ-:\op{Der}(A,{N^*}^{\hat{\odot} n})\to \op{Der}(A,{M^*}^{\hat{\odot} n})\text{, and }{f^*}\circ-:\op{Der}(D,{N^*}^{\hat{\odot} n})\to \op{Der}(D,{M^*}^{\hat{\odot} n})$$
are trivial fibrations of dg vector spaces in the first case, and fibrations in the second. Similarly, maps
$$-\circ\phi_0:\op{Der}(A,{N^*}^{\hat{\odot} n})\to \op{Der}(D,{N^*}^{\hat{\odot} n}),\text{ and }-\circ\phi_0:\op{Der}(A,{M^*}^{\hat{\odot} n})\to \op{Der}(D,{M^*}^{\hat{\odot} n})$$
are fibrations  in the first case, and trivial fibrations in the second. Denote by $k\langle {\psi^*}_n^{\op{Der}},[ {\psi^*}_n^{\op{Der}},d_0]\rangle$ the subcomplex of $\op{Der}(A,{M^*}^{\hat{\odot n}})$ generated by $ {\psi^*}_n^{\op{Der}}$ (and its image under the differential). Observe that the pullbacks 
\begin{center}
\begin{tikzcd}
\op{Der}_{/\psi^*}(A,{N^*}^{\hat{\odot n}})\arrow[r]\arrow[d, two heads]&\op{Der}(A,{N^*}^{\hat{\odot n}})\arrow[d, two heads,swap, "-\circ\phi_0"] & \op{Der}_{/f^*\circ\psi^*}(A,{M^*}^{\hat{\odot n}})\arrow[r]\arrow[d, two heads]&\op{Der}(A,{M^*}^{\hat{\odot n}})\arrow[d, two heads,swap, "-\circ\phi_0"]  \\
k\langle {\psi^*}_n^{\op{Der}},[{\psi^*}_n^{\op{Der}},d_0]\rangle   \arrow[r]&\op{Der}(D,{N^*}^{\hat{\odot n}}) & k\langle {f^*}\circ{\psi^*}_n^{\op{Der}},[{f^*}\circ{\psi^*}_n^{\op{Der}},d_0]\rangle   \arrow[r]&\op{Der}(D,{M^*}^{\hat{\odot n}})
\end{tikzcd}
\end{center}
are in fact homotopy pullbacks, and that the composition ${f^*}\circ-$ induces the map in the pullbacks which is a quasi-isomorphism. It is important to notice that it is also a fibration (i.e. degree-wise surjective). Namely, given 
$\xi\in \op{Der}_{/f^*\circ\psi^*}(A,{M^*}^{\hat{\odot n}})$, let ${\xi_1}\in \op{Der}(A,{N^*}^{\hat{\odot n}})$ be such that ${\xi_1}={f^*}\circ\xi.$ Define a derivation $\xi_2$ from $A=\op{Sym}_kV_A$ to ${N^*}^{\hat{\odot n}}$ on generators by
$$
\xi_2(v) = \begin{cases}
 \psi_n^{\op{Der}}(v)  & \text{for }v\in V_D \\
  \xi_1(v) & \text{otherwise}.
\end{cases}
$$
By construction, $\xi_2\in\op{Der}_{/\psi}(A,{N^*}^{\hat{\odot n}})$, and $\xi_2\circ\phi_0=\xi.$

 Composing the right-hand side of (\ref{ld}) with ${f^*}$ gives
$$[{g^*_n}^{\op{Der}},d_0]=\sum_{k=1}^{n-1}[g_k^*,d_{n-k}]-[{g^*_n}^{\epsilon},d_0].$$  The homotopy fiber
\begin{center}
\begin{tikzcd}
\op{Fib}\arrow[r]\arrow[d, two heads,"\sim"]&\op{Der}_{/\psi^*}(A,{N^*}^{\hat{\odot n}})\arrow[d, two heads,"\sim"]\arrow[d, two heads,swap, "{f^*}\circ-"]\\
k\langle {g^*}_n^{\op{Der}},[{g^*}_n^{\op{Der}},d_0]\rangle   \arrow[r]&\op{Der}_{/f^*\circ\psi^*}(A,{M^*}^{\hat{\odot n}})
\end{tikzcd}
\end{center}
is acyclic. As the left-hand side of (\ref{ld}) is a cycle in the homotopy fiber which maps by the vertical arrow to $[{g^*}_n^{\op{Der}},d_0]$, it is the image under the differential of a derivation from $A$ to ${N^*}^{\hat{\odot n}}$ denoted by ${l^*}_n^{\op{Der}}$, such that ${f^*}\circ{l^*}_n^{\op{Der}}={g^*}_n^{\op{Der}}$, and ${l^*_n}^{\op{Der}}\circ \phi_0={\psi^*_n}^{\op{Der}}.$

Let us now extend $l^* _n$ to $L^*$ , for $n\geq 0$, with $l^*_{<0}=0$. As  a fibration of SH LR[1] pairs, $\phi$ is dually the map graded pro-$A$-algebras
$$ \phi_0^*: \widehat{\op{Sym}}_D \langle\langle v^*_i \rangle\rangle_{i\in I}\to\widehat{\op{Sym}}_A \langle\langle v^*_j \rangle\rangle_{j\in J},\hspace{10pt} av^*_i\mapsto \phi_0(a)v^*_i.
$$
Any multiplicative extension of $l^*_{<n}$ to weight $n$ is a sum of an arbitrary $A$-linear derivation and a term completely determined by $l^*_{<n}$ and $l^*_n|_A$. Indeed, denote by 
$$({l^*_n}_{(i,k)}:\op{Sym}^{\leq \beta(k)}_AL^*_{\beta(i)}\to\op{Sym}^{\leq k}_CN^*_{i} )$$ a morphism of inverse systems representing such an extension $l^*_n$.
\begin{equation*}
\begin{split}
{l^*_n}_{(i,k)}(a\cdot v^*_1\cdots v^*_m)&=\sum_{k=1}^n av^*_1\cdots l^*_n(v^*_k)\cdots v^*_m+\sum_{\substack{k_0+\ldots+k_n=n\\k_i\neq n\text{ for }i>0}}l^*_{k_0}(a)\cdot {l^*}_{k_1}(v^*_1)\cdots l^*_{k_m}(v^*_m)\\
=&({{l^*_n}^{\op{Der}}}_{(i,k)}+{{l^*_n}^{\epsilon}}_{(i,k)})(a\cdot v^*_1\cdots v^*_m).
\end{split}
\end{equation*}
Morphisms of graded pro-algebras represented by $({{l^*_n}^{\op{Der}}}_{(i,k)})$ and $({{l^*_n}^{\epsilon}}_{(i,k)})$ are denoted respectively by  ${l^*_n}^{\op{Der}}$, and ${l^*_n}^{\epsilon}$. Clearly, ${l^*_n}={l^*_n}^{\op{Der}}+{l^*_n}^{\epsilon}$.

Similarly,
$g^*_n={{g^*}_n^{\op{Der}}}+{{g^*}_n^{\epsilon}},$ and ${\psi^*}_n={{\psi^*}_n^{\op{Der}}}+{{\psi^*}_n^{\epsilon}}$.
Observe that ${f^*\circ{l^*}}_n^{\epsilon}={{g^*}}^{\epsilon}$, and ${{l^*}}_n^{\epsilon}\circ\phi^*={{\psi^*}}_n^{\epsilon}$.
All that is left is to find ${{l^*}}_n^{\op{Der}}$, such that
\begin{equation}\label{ld2}
[{l^*_n}^{\op{Der}},d_0]=-\sum_{k=1}^{n-1}[l_k^*,d_{n-k}]-[{l^*_n}^{\epsilon},d_0],
\end{equation}
\begin{equation}\label{lg2}
{f^*}\circ{l^*_n}^{\op{Der}}={g^*_n}^{\op{Der}},
\end{equation}
and
\begin{equation}\label{lh2}
{l^*_n}^{\op{Der}}\circ \phi^*={\psi^*_n}^{\op{Der}}.
\end{equation}

Denote by $\op{Der}_{A,n}(\widehat{\op{Sym}}_AL^*, \widehat{\op{Sym}}_CN^*)$ the space of $A$-linear derivations of weight $n$. Observe that
\begin{equation}\label{obs}
\begin{split}\op{Der}_{A,n}(\widehat{\op{Sym}}_AL^*, \widehat{\op{Sym}}_CN^*)&=\varprojlim_{i,k}\varinjlim_{j,l}\op{Der}_{A,n}(({\op{Sym}}^{\leq l}_AL_j)^*, ({\op{Sym}}^{\leq k}_CN_i)^*)\\
&\cong\varprojlim_{i, k> n}\varinjlim_{j, l\geq k}\op{Der}_{A,n}(({\op{Sym}}^{\leq l}_AL_j)^*, ({\op{Sym}}^{\leq k}_CN_i)^*)
\\
&\cong\varprojlim_{i}\varinjlim_{j}\op{Hom}_{A}(L_j^*, (\op{Sym}^{n}_CN_i)^*)
\\
&\cong\op{Hom}_{A}(L^*, (\op{Sym}^{n}_CN)^*)\cong \op{Hom}_C({\op{Sym}}_C^{n}N,C\otimes_A L)
\end{split}
\end{equation}
Under the natural  isomorphism \ref{obs}, maps
\begin{equation*}
\begin{split}
&\phi^*\circ-:\op{Der}_A(\widehat{\op{Sym}}_AL^*, \widehat{\op{Sym}}_CN^*)\twoheadrightarrow \op{Der}_K(\widehat{\op{Sym}}_DK^*, \widehat{\op{Sym}}_CN^*)\\
&\phi^*\circ-:\op{Der}_A(\widehat{\op{Sym}}_AL^*, \widehat{\op{Sym}}_BM^*)\twoheadrightarrow \op{Der}_K(\widehat{\op{Sym}}_DK^*, \widehat{\op{Sym}}_BM^*)
\end{split}
\end{equation*}
are fibrations in the first case, and trivial fibrations in the second case, while the maps
\begin{equation*}
\begin{split}
&-\circ f^*:\op{Der}_A(\widehat{\op{Sym}}_AL^*, \widehat{\op{Sym}}_CN^*)\twoheadrightarrow \op{Der}_A(\widehat{\op{Sym}}_AL^*, \widehat{\op{Sym}}_BM^*)\\
&-\circ f^*:\op{Der}_A(\widehat{\op{Sym}}_AL^*, \widehat{\op{Sym}}_CN^*)\twoheadrightarrow \op{Der}_A(\widehat{\op{Sym}}_AL^*, \widehat{\op{Sym}}_BM^*)
\end{split}
\end{equation*} are trivial fibrations in the first case, and fibrations in the second case. Hence, the induced map between (homotopy) fiber products
\begin{center}
\begin{tikzcd}
\op{Der}_{A/\psi^*}(\widehat{\op{Sym}}_AL^*, \widehat{\op{Sym}}_CN^*)\arrow[r]\arrow[d, two heads,swap]&\op{Der}_A(\widehat{\op{Sym}}_AL^*, \widehat{\op{Sym}}_CN^*)\arrow[d, two heads,swap, "-\circ\phi^*"] \\
k\langle {\psi^*}_n^{\op{Der}},[{\psi^*}_n^{\op{Der}},d_0]\rangle   \arrow[r]&\op{Der}_D(\widehat{\op{Sym}}_DK^*, \widehat{\op{Sym}}_CN^*), 
\end{tikzcd}
\end{center}
and
\begin{center}
\begin{tikzcd}
 \op{Der}_{A/f^*\psi^*}(\widehat{\op{Sym}}_AL^*, \widehat{\op{Sym}}_BM^*)\arrow[r]\arrow[d, two heads]&\op{Der}_A(\widehat{\op{Sym}}_AL^*, \widehat{\op{Sym}}_BM^*)\arrow[d, two heads,swap, "-\circ\phi^*"] \\
 k\langle {f^*}{\psi^*}_n^{\op{Der}},[{f^*}{\psi^*}_n^{\op{Der}},d_0]\rangle   \arrow[r]&\op{Der}_D(\widehat{\op{Sym}}_DK^*, \widehat{\op{Sym}}_BM^*),
\end{tikzcd}
\end{center}
is a weak equivalence. It is also easily seen to be surjective. Finally, the right hand side of \ref{ld2} is a cycle (identically zero in the case $n=0$), thus a cycle in the homotopy fiber (of dg vector spaces)
\begin{center}
\begin{tikzcd}
\op{Fib}\arrow[r]\arrow[d, two heads,"\sim"]&\op{Der}_{A/\psi^*}(\widehat{\op{Sym}}_AL^*, \widehat{\op{Sym}}_CN^*)\arrow[d, two heads,"\sim"]\arrow[d, two heads,swap, "{f^*}"]\\
k\langle {g^*}_n^{\op{Der}},[{g^*}_n^{\op{Der}},d_0]\rangle   \arrow[r]&\op{Der}_{A/f^*\psi^*}(\widehat{\op{Sym}}_AL^*, \widehat{\op{Sym}}_BM^*),
\end{tikzcd}
\end{center}
which is mapped by the (vertical) trivial fibration to $[{g^*}_n^{\op{Der}},d_0]$. Hence, it must be the boundary $[{l^*}_n^{\op{Der}},d_0]$ of a derivation ${l^*}_n^{\op{Der}}$ which satisfies the equations (\ref{lg2}) and (\ref{lh2}).
\end{proof}

\section{BV-type resolutions}\label{BV}

We begin by retelling the story of the classical BV-BRST formalism (\cite{HT}) on an algebraic toy-model.
Let $$X=\mathbb{A}^n=\op{Spec}(k[x_1,\ldots x_n])$$
be our ambient space -- in the Langrangian formalism, it is the configuration space, and we call the variables $x_i$ fields. Let 
$$\Sigma=\op{Spec}(A)\hookrightarrow X$$
be a surface in $X$ -- call it critical surface of the Lagrangian.  Let $$\mathcal{F}\subset T_X$$ be a distribution -- called the space of infinitesimal Gauge symmetries. It happens that infinitesimal gauge symmetries restrict to $\Sigma$, and that the restriction
$$\mathcal{F}|_{\Sigma}\subset T_\Sigma= \op{Der}(A)$$ is involutive. Consequently, $(A,\mathcal{F}|_{\Sigma})$ is a LR pair. In the first step of BV formalism, we take a free resolution  of $\mathcal{F}|_{\Sigma}$ $$q:A\langle v_i^*\rangle\overset{\sim}{\twoheadrightarrow}\mathcal{F}|_{\Sigma}$$ inside the category of $A$-modules.  It has the structure of a SH LR algebra for which $q$  is a strict morphism of SH LR algebras (lemma \ref{lemma18}). Generators $v_i$ of its Chevalley-Eilenberg complex, dual to $v_i^*$, are called ghosts.
In the next step, we take a Koszul-Tate resolution of $A$ $$p_{\op{KT}}:(k[x_i,x^*_i,v^*_j],\delta)\overset{\sim}{\twoheadrightarrow} A$$ in the category of dg $\mathcal{O}_X$-algebras. Variables $x^*_i$ are called antifields, and variables $v^*_j$ are called antighosts. In the final step, we put together fields, ghosts, anti-fields, and anti-ghosts, to construct a SH LR pair whose CE complex $(k[x_i,x^*_i,v^*_j][[v_j]],s)$ 
 is called the BV complex. The map $(p_{\op{KT}},\op{id})$, dually,
 $$A\otimes_{k[x_i,x^*_i,v^*_j]}-:(k[x_i,x^*_i,v^*_j][[v_j]],s)\to (A[[{v_j}]],d)$$
 is a (strict) morphism of SH LR pairs (lemma \ref{lemma20}). 

In summary, the classical BV formalism replaces $(A,\mathcal{F}|_{\Sigma})$ by an object in ${\tt SHLR}^{\op{cof}}(k)$, obtained through a zig-zag of strict SH morphisms
\begin{center}
\begin{tikzcd}
&(A,\mathcal{F}|_{\Sigma})&\arrow[l, two heads, swap, "\sim"]\arrow[r,hook,"\sim"](A,A\langle v_i^*\rangle)&(k[x_i,x^*_i,v^*_j],k[x_i,x^*_i,v^*_j]\langle v_i^*\rangle).
\end{tikzcd}
\end{center}

In the Largangian field theory, fields are dual to anti-fields, and ghosts are dual to anti-ghost. Consequently, the BV complex carries a (-1)-shifted symplectic structure. As shifted symplectic structures go beyond the scope of the present paper, we must stop here with the wonderful story of BV-BRST formalism. Important for us is that the above procedure applies to any dg LR pair in place of an involutive distribution on a (possibly singular) surface, even if the resulting BV complex needs not be $(-1)$ -- shifted symplectic in general. We speak of BV resolutions of dg LR pairs, which are unique up to homotopy:
\begin{defi}
BV resolution of a dg LR pair $(A,M)$ is a SH quasi-isomorphism $(A,QM)\overset{\sim}{\rightsquigarrow}(QA,Q'M)$, with $QA$ a semi-free cdga, $QM$ a cell complex of $A$-modules, and $Q'M$ a cell complex of $QA$-modules, together with a linear SH quasi-isomorphism $\pi_{Q}:(A,QM)\xrightarrow{\sim} (A,M)$. Given a dg LR pair $(A,M)$, a morphism of its BV resolutions is a commutative diagram
\begin{center}
\begin{tikzcd}
\arrow[r,rightsquigarrow](A,Q_1M)\arrow[d,swap,"\sim"]\arrow[d,"f_0"]&(Q_1A,Q_1'M) \arrow[d,rightsquigarrow,"f_1"] \arrow[d,rightsquigarrow,swap,"\sim"]\\
(A,Q_2N)\arrow[r,rightsquigarrow]&(Q_2A,Q'_2M),
\end{tikzcd}
\end{center}
with $f_0$ a linear SH morphism, such that $\pi_{Q_2}\circ f_0=\pi_{Q_1}.$
\end{defi}
\begin{prop}
Given a dg LR pair $(A,M)$, the category of its BV resolutions has a simply connected nerve.
\end{prop}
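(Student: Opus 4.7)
I plan to prove the proposition by showing that the nerve of $\mathbf{BV}(A,M)$ is simply connected, via two standard steps: (i) any two BV resolutions are connected by a zig-zag of morphisms in $\mathbf{BV}(A,M)$; and (ii) any two parallel morphisms in $\mathbf{BV}(A,M)$ are identified in the fundamental groupoid of the nerve. Since morphisms of BV resolutions are by definition weak equivalences, every morphism becomes invertible in this groupoid, which makes step (ii) amenable to a path-object argument.

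For (i), given two BV resolutions $R_i = ((A,Q_iM)\overset{\sim}{\rightsquigarrow}(Q_iA,Q'_iM), \pi_{Q_i})$ for $i=1,2$, I construct a common $R_3$ equipped with morphisms $R_3\to R_1$ and $R_3\to R_2$ in $\mathbf{BV}(A,M)$. The cdga $Q_3A$ is obtained by factorizing $Q_1A\sqcup_A Q_2A\to A$ as a semi-free extension followed by a trivial fibration, yielding cdga maps $\psi_i:Q_iA\to Q_3A$ over $A$. The cell complex of $A$-modules $Q_3M$ is chosen together with linear SH quasi-isomorphisms to each $Q_iM$ refining $\pi_{Q_i}$, built using Lemma \ref{lemma18} and Lemma \ref{lemma19}. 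Lemma \ref{lemma20} then endows $(Q_3A,Q'_3M)$ with an SH LR structure for which the top arrow of $R_3$ exists, and Lemma \ref{lemma21} lifts each $\psi_i$ to an SH morphism $(Q_3A,Q'_3M)\rightsquigarrow (Q_iA,Q'_iM)$ making the BV-resolution square commute, giving the required morphisms.

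For (ii), given parallel $\alpha,\beta:R_1\to R_2$, I introduce a path-object BV resolution $P(R_2)$ built from the cdga path object $Q_2A\otimes k[t,dt]$ (with $|t|=0$, $d(t)=dt$) and the analogous path objects for modules, equipped via Lemma \ref{lemma21} with an SH LR structure and SH morphisms realizing the evaluations $\mathrm{ev}_0,\mathrm{ev}_1: P(R_2)\to R_2$ and the diagonal $s:R_2\to P(R_2)$, all of them morphisms of $\mathbf{BV}(A,M)$, with $\mathrm{ev}_0\circ s=\mathrm{ev}_1\circ s=\mathrm{id}_{R_2}$. Lifting the pair $(\alpha,\beta)$ to a single morphism $H:R_1\to P(R_2)$, again via Lemma \ref{lemma21}, gives $\mathrm{ev}_0\circ H=\alpha$ and $\mathrm{ev}_1\circ H=\beta$. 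In the fundamental groupoid, $s$ is invertible, hence $\mathrm{ev}_0=s^{-1}=\mathrm{ev}_1$, so $\alpha=\mathrm{ev}_0\circ H=\mathrm{ev}_1\circ H=\beta$.

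The main obstacle is the application of Lemma \ref{lemma21} to the cylinder and path-object constructions; ensuring its hypotheses hold requires arranging (trivial) fibrations and cofibrations of SH LR pairs on the $Q_3A$-data in step (i) and on the $Q_2A\otimes k[t,dt]$-data in step (ii), and then inductively building the SH morphism components with the correct anchor and bracket compatibilities weight by weight. Granted these liftings, the remainder is a formal consequence of the factorization and lifting properties already established in the paper.
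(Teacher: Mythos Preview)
Your overall strategy—connectedness via a common refinement, simple connectedness via a path-object argument—is reasonable, but step (ii) has a genuine gap that is exactly the difficulty the paper's proof is designed to work around.

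To produce the homotopy $H:R_1\to P(R_2)$ you invoke Lemma~\ref{lemma21}. That lemma requires the left vertical map of the lifting square to be a (trivial) \emph{cofibration}, and in this paper cofibrations of SH LR pairs are by definition \emph{strict} morphisms with $f_0$ degreewise surjective and $f^1$ a relative cell complex. The top arrow of an arbitrary BV resolution $(A,Q_1M)\rightsquigarrow(Q_1A,Q'_1M)$ is merely an SH quasi-isomorphism; there is no reason it is strict, let alone a cofibration. Hence you cannot place $R_1$'s top arrow as the left leg of a Lemma~\ref{lemma21} square and lift $(\alpha_1,\beta_1)$ against $\op{Path}(Q_2A,Q'_2M)\twoheadrightarrow(Q_2A,Q'_2M)^2$. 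The same problem already appears one step earlier: to get $H_0$ you need $\alpha_0$ and $\beta_0$ to be homotopic as linear SH maps $Q_1M\rightsquigarrow Q_2M$ under $\pi_{Q_2}$, but $\pi_{Q_2}$ is only a linear SH quasi-isomorphism, not a fibration in the sense of Lemma~\ref{lemma19}, so the relevant relative path object need not have the right lifting property against $0\hookrightarrow Q_1M$.

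The paper resolves this by never lifting along an arbitrary BV resolution. It fixes a distinguished resolution $f_u$ whose left piece $Q_uM$ is a \emph{dg} LR algebra cofibrant in Nuiten's semi-model structure (so that genuine cylinder objects and the lifting of Lemma~\ref{lemma19} against the trivial fibration $\pi_u$ are available), and it replaces any $f_Q$ by an auxiliary $f_{\overline Q}$ built from Lemma~\ref{lemma20}, whose top arrow \emph{is} a strict trivial cofibration by construction. All subsequent lifts (the maps $g,h$, the homotopy $I$, and the diagram~(\ref{diagbv})) are performed only along these specially prepared arrows. Your sketch skips precisely this preparation; to make your argument go through you would have to reproduce it, at which point you are essentially carrying out the paper's proof.

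A minor point on step (i): the pushout $Q_1A\sqcup_A Q_2A$ is not defined, since the structure maps go $Q_iA\to A$, not $A\to Q_iA$. Presumably you mean the coproduct $Q_1A\otimes_k Q_2A$ followed by factorization of the induced map to $A$; this does give a semi-free $Q_3A$ with $Q_3A\to A$ a trivial fibration and maps $\psi_i:Q_iA\to Q_3A$ over $A$, so the idea is salvageable once the notation is fixed.
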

\begin{proof}
First, we show that the nerve is non-empty. Denote by $\pi_u:Q_u M\twoheadrightarrow M$ a cofibrant replacement of $M$ by a cell complex in Nuiten's semi-model structure on dg LR algebras over $A$. Given a cofibrant replacement of $A$ by a semi-free cdga $r_A:\overline{Q}A\twoheadrightarrow A$, denote by $f_u: (A,Q_uM)\rightarrow(\overline{Q}A,{Q}'_u M)$ a strict morphism of SH LR pairs given by the Lemma \ref{lemma20}. Clearly, $f_u$ is a BV resolution. 

To prove that the nerve is connected, given any other BV resolution $f_Q:(A,QM)\rightsquigarrow(QA,Q'M)$ of $(A,M)$, we construct a zig-zag of morphisms of BV resolutions 
\begin{center}
\begin{tikzcd}
f_Q&f_{\overline{Q}}\arrow[l,swap,"g"]\arrow[r,"h"]&f_u.
\end{tikzcd}
\end{center}
Denote by $f_{\overline{Q}}: (A,QM)\rightarrow(\overline{Q}A,\overline{Q}' M)$ a strict morphism of SH LR pairs given by the Lemma \ref{lemma20}. $g_1:(\overline{Q}A,\overline{Q}' M)\to (QA,Q'M)$ is the lift
\begin{center}
\begin{tikzcd}
(A,QM)\arrow[d, hook,"f_{\overline{Q}}"]\arrow[d, hook,swap,"\sim"]\arrow[d, hook,"f_{\overline{Q}}"]\arrow[rr,rightsquigarrow,"f_Q"] &  & (QA,Q'M)\arrow[d, two heads] \\
(\overline{Q}A,\overline{Q}' M)\arrow[urr,dashed,rightsquigarrow,"g_1"]\arrow[rr,rightsquigarrow]& &(k,0).
\end{tikzcd}
\end{center}
With this, we have constructed the morphism $g=(\op{id},g_1):f_{\overline{Q}}\to f_Q$. It remains to construct $h:f_{\overline{Q}}\to f_{u}$. Linear SH morphism $h_0:{Q}M\to Q_u M$ of SH LR $A$-algebras is given by the lifting property
\begin{center}
\begin{tikzcd}
0\arrow[d, hook]\arrow[d, hook]\arrow[rr]  & &Q_uM\arrow[d, two heads,"\pi_u"]\arrow[d,two heads,swap,"\sim"]\\
{Q}M\arrow[urr,dashed,"h_0"]\arrow[rr,"\pi_{\overline{Q}}"] & &M,
\end{tikzcd}
\end{center}
and the SH LR morphism $h_1:(\overline{Q}A,\overline{Q}'M)\to (\overline{Q}A,Q_u'M)$ is given by the lifting property
\begin{center}
\begin{tikzcd}
(A,QM)\arrow[d, hook,"f_{\overline{Q}}"]\arrow[d, hook,swap,"\sim"]\arrow[d, hook,"f_{\overline{Q}}"]\arrow[rr,rightsquigarrow,"f_u\circ h_0"] &  & (\overline{Q}A,Q'_uM)\arrow[d, two heads] \\
(\overline{Q}A,\overline{Q}'M)\arrow[urr,dashed,rightsquigarrow,"h_1"]\arrow[rr,rightsquigarrow]& &(k,0).
\end{tikzcd}
\end{center}

Finally, we show that the nerve has no non-trivial cycles. First, we prove that any two morphisms $h$ as above are equal in the homotopy category of BV resolutions. Assume $h^1$ and $h^2$ to be such maps. $Q_u M$ being a dg LR algebra over $A$, morphisms $h_0^1$ and $h_0^2$ factor through a map $\mathcal{L}_AC_A(QM)\coprod \mathcal{L}_AC_A(QM)\to Q_u M$. Denote by $\op{Cyl}$ the cylinder in Nuiten's semi-model structure on the category of dg LR algebras over $A$. We get the lifting diagram
\begin{center}
\begin{tikzcd}
\mathcal{L}_AC_A(QM)\coprod \mathcal{L}_AC_A(QM)\arrow[d, hook,"{(\imath_1,\imath_2)}"]\arrow[rr]  & &Q_uM\arrow[d, two heads,"\pi_u"]\arrow[d,two heads,swap,"\sim"]\\
\op{Cyl}(\mathcal{L}_AC_A(QM))\arrow[urr,dashed,swap,"H"]\arrow[rr] & &M.
\end{tikzcd}
\end{center}
For $(B,N)\in {\tt SHLR}^{\op{cof}}(k)$, denote by $\op{Path}(B,N)$ the path object from \cite{P}. Given a dg LR pair $(A,L)$ with $L$ a cell complex of $A$-modules, left homotopic maps $(A,L)\to (B,N)$ are also right homotopic: the proof of Proposition 1.2.5(v) in \cite{Ho} carries over verbatim. Hence, we obtain a right homotopy
\[
\begin{tikzcd}[column sep=large]
& (A,QM)
  \ar[d,"\eta_A"]
  \ar[ddl,swap,"h^1_2\circ  f_{\overline{Q}}", bend right=25, pos=0.7]
  \ar[ddr," h^2_2\circ  f_{\overline{Q}}", bend left=25, pos=0.7]
& \\[2pt]
& (A,\mathcal{L}_AC_A(QM))
  \ar[dl,swap,"f_u\circ H\circ\imath_1", pos=0.6]
  \ar[d,"K"]
  \ar[dr,"f_u\circ H\circ\imath_2", pos=0.6]
& \\[2pt]
(\overline{Q}A,Q'_uM)
& \op{Path}(\overline{Q}A,Q'_uM)
  \arrow[l,two heads]
  \arrow[r,two heads]
& (\overline{Q}A,Q'_uM).
\end{tikzcd}
\]
By the lifting property, we conclude that maps $h_1^1$ and $h_1^2$ are homotopic as well
\begin{center}
\begin{tikzcd}
(A,QM)\arrow[d, hook,"f_{\overline{Q}}"]\arrow[d, hook,swap,"\sim"]\arrow[rr,rightsquigarrow," K\circ \eta_A"] &  &\op{Path}(\overline{Q}A,Q'_uM)\arrow[d, two heads] \\
(\overline{Q}A,\overline{Q}'M)\arrow[urr,dashed,rightsquigarrow,"I"]\arrow[rr,rightsquigarrow,"{(h_2^1,h_2^2)}"]& & (\overline{Q}A,Q'_uM)\times  (\overline{Q}A,Q'_uM).
\end{tikzcd}
\end{center}
With $J$ as in the proof of \cite[Proposition 1.2.5(v)]{Ho}, and $i=1,2$, we define morphisms of BV resolutions $\alpha$ and $\beta_i$ such that $\beta^1\circ \alpha=h^1$, and $\beta^2\circ \alpha=h^2$:
\begin{center}
\begin{tikzcd}
(A,QM)\arrow[d, "\imath_1\circ \eta_A"]\arrow[rr,rightsquigarrow,"f_{\overline{Q}}"] &  &(\overline{Q}A,\overline{Q}'M)\arrow[d, rightsquigarrow,"I"] \\
\op{Cyl}(A,\mathcal{L}_AC_AQM)\arrow[d,"H"]\arrow[rr,rightsquigarrow,"J"] &  &\op{Path}(\overline{Q}A,Q'_uM)\arrow[d, two heads,"p_i"] \\
(A,Q_uM)\arrow[rr,rightsquigarrow,"f_u"]& & (\overline{Q}A,Q'_uM).
\end{tikzcd}
\end{center}
commutes. Denote by $\beta$ the morphisms of BV resolutions
\begin{center}
\begin{tikzcd}
\op{Cyl}(A,\mathcal{L}_AC_AQM)\arrow[d, equal]\arrow[rr,rightsquigarrow,"f_u\circ H"] &  &(\overline{Q}A,Q'_uM)\arrow[d, hook] \\
\op{Cyl}(A,\mathcal{L}_AC_AQM)\arrow[rr,rightsquigarrow,"J"] &  &\op{Path}(\overline{Q}A,Q'_uM).
\end{tikzcd}
\end{center}
Clearly, $\beta^1\circ\beta=\beta^2\circ\beta$. Hence $\beta^1=\beta^2$, and finally $h^1=h^2$.

Any two different BV resolutions $f_{Q_1}$, $f_{Q_2}$ are connected by the path $(g^2)\circ (h^2)^{-1} \circ (h^1) \circ (g^1)^{-1}.$ To prove that the nerve is simply connected it suffices to show that any morphism of BV resolutions $\phi: f_{Q^1}\to f_{Q^2}$ is homotopic to that path. We do this by constructing a morphism $\overline{\phi}:f_{\overline{Q}^1}\to f_{\overline{Q}^2}$ such that the diagram
\begin{equation}\label{diagbv}
\begin{tikzcd}
&f_u&\\
f_{\overline{Q}^1}\arrow[d,"g^1"]\arrow[rr,dashed,"\overline{\phi}"]\arrow[ur,"h^1"]&&f_{\overline{Q}^2}\arrow[d,"g^2"]\arrow[ul,swap,"h^2"]\\
f_{Q^1}\arrow[rr,"\phi"]&&f_{Q^2}
\end{tikzcd}
\end{equation}
commutes in the homotopy category of BV resolutions. By definition, $\overline{\phi}_0=\phi_0$, and $\overline{\phi}_1$ is given by the lifting property
\begin{center}
\begin{tikzcd}
(A,{Q}^1 M)\arrow[d,hook,"f_{\overline{Q}^1} "]\arrow[rr,"f_{\overline{Q}^2}\circ \overline{\phi}_0 "]& &(\overline{Q}A,\overline{Q}^2 M).\\
(\overline{Q}A,\overline{Q}^1 M)\arrow[urr, dashed,"\overline{\phi}_1"]& & 
\end{tikzcd}
\end{center}

It remains to prove that the diagram (\ref{diagbv}) commutes. By uniqueness, $h^1=h^2\circ \overline{\phi}$.
Denote by 
$$(QA,QM)\overset{i_{Q}}{\rightarrow}\op{Path}(QA,QM)\overset{p^1_{Q}\times p^2_{Q}}{\twoheadrightarrow}(QA,QM)\times(QA,QM)$$
the path decomposition in ${\tt SHLR}^{\op{cof}}(k)$. Given a BV resolution $f_Q:(A,QM)\rightsquigarrow(QA,Q'M)$, denote by $f_{\op{Path}}(Q)$ the BV resolution $ i_{Q'}\circ f_Q:(A,QM)\rightsquigarrow\op{Path}(QA, Q'M)$. This yields the canonical commutative diagram of BV resolutions
\begin{center}
\begin{tikzcd}
& f_Q\arrow[d,"i_{f_Q}"]\arrow[dl,swap,"\op{id}"]\arrow[dr,"\op{id}"] & \\
f_Q & f_{\op{Path}}(Q) \arrow[l,"p^1_{f_Q}"]\arrow[r,swap,"p^2_{f_Q}"]& f_Q.
\end{tikzcd}
\end{center}
Consequently, morphisms $p^1_{f_Q}$ and $p^2_{f_Q}$ are equal in the homotopy category of BV resolutions. The map $H$ in the lifting diagram
\begin{center}
\begin{tikzcd}
(A,{Q}^1 M)\arrow[d,hook,swap,"\sim"]\arrow[d,hook," f_{\overline{Q}^1} "]\arrow[rrr,"i_{f_{Q_ 2}}\circ f_{Q^2}\circ {\phi}_0"]&  & &\op{Path}(Q^2A,{Q^2}'A) \arrow[d,two heads," p_{f_{{Q^2}}}"]\\
(\overline{Q}A,{{\overline{Q}}^1}' M)\arrow[rrr,swap,"{(g ^2_1\circ {\overline{\phi}}_1,\phi_1\circ g^1_1)}"]\arrow[urrr, dashed,"H"]&  & & (Q^2A,{Q^2}'A)\times (Q^2A,{Q^2}'A).
\end{tikzcd}
\end{center}
yields a morphism of BV resolutions $(\phi_0,H):f_{\overline{Q}^1}\to f_{\op{Path}}(Q^2)$ such that $p^1_{f_{Q^2}}\circ H =g^2\circ \overline{\phi}$, and $p^2_{f_{ Q^2}}\circ H =\phi\circ g^1$. Thus, in the homotopy category, $g^2\circ \overline{\phi}=\phi\circ g^1.$ 
\end{proof}

A question one may ask is how well the BV resolutions behave with respect to morphisms, i.e. does a morphism of dg LR pairs $(f_0,f):(A,M)\to (B,N)$ induce a morphism of BV resolutions which is unique up to homotopy? However, the morphism $(f_0,f)$ is homotopy coherent only if $A\otimes_B N$ is of the correct homotopy type. Thus the problem should be restricted either
\begin{enumerate}
\item to morphisms of SH LR algebras over a fixed cdga $A=B$, or
\item to morphisms of SH LR pairs $(A,M)$ with $M$ a cofibrant $A$-module.
\end{enumerate}
First problem is resolved in \cite[Theorem 2.4]{LLG}. The second problem has a similar answer:
\begin{prop}\label{pr}
Let $(A,M)$ and $(B,N)$ be SH LR pairs with semi-free $B$ and cell complexes $M\in{\tt Mod}(A)$, $N\in {\tt Mod}(B)$. Let $f:(A,M)\to (B,N)$ be any SH morphism, and let $i: (A,M)\hookrightarrow (QA,\overline{M})$ be a trivial cofibration.
\begin{enumerate}
\item There exists a SH morphism (called lift) $l:(QA,\overline{M})\to (B,N)$ such that $l\circ i=f$.
\item Lift $l$ is unique up to homotopy.
\end{enumerate}
\end{prop}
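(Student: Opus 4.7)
The plan is to apply Lemma \ref{lemma21} twice: once to produce the lift, and once to produce a right homotopy between any two lifts.

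For existence (part 1), since $B$ is semi-free and $N$ is a cell complex in ${\tt Mod}(B)$, the unique morphism $(B,N) \to (k,0)$ is a fibration in the sense of Section \ref{lift}, and the terminal cdga $k$ is trivially semi-free. Consider the square
\begin{center}
\begin{tikzcd}
(A,M)\arrow[d, hook,"i"]\arrow[d, hook,swap,"\sim"]\arrow[rr,rightsquigarrow,"f"] &  &(B,N)\arrow[d, two heads]\\
(QA,\overline{M})\arrow[urr,dashed,rightsquigarrow,"l"]\arrow[rr,rightsquigarrow]& &(k,0),
\end{tikzcd}
\end{center}
whose left vertical is the given trivial cofibration and whose right vertical is the canonical fibration. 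All cofibrancy hypotheses of Lemma \ref{lemma21} are satisfied ($M$, $\overline{M}$, $N$, and $0$ are cell complexes over their respective cdga-s, and $k$ is semi-free), so a lift $l$ with $l\circ i = f$ exists.

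For uniqueness up to homotopy (part 2), let $l^1, l^2:(QA,\overline{M})\rightsquigarrow(B,N)$ be two lifts with $l^k\circ i = f$. Invoke the path object factorization
$$(B,N)\xrightarrow{\;j_{B,N}\;}\op{Path}(B,N)\xrightarrow{\;(p_1,p_2)\;}(B,N)\times(B,N)$$
of the diagonal from \cite{P}, where $j_{B,N}$ is a weak equivalence and $(p_1,p_2)$ is a fibration. The pair $(l^1,l^2)$ agrees on the image of $i$ with the constant pair $(f,f)$, which itself factors as $j_{B,N}\circ f$. One then forms the lifting square
\begin{center}
\begin{tikzcd}
(A,M)\arrow[d, hook,"i"]\arrow[d, hook,swap,"\sim"]\arrow[rr,rightsquigarrow,"j_{B,N}\circ f"] &  &\op{Path}(B,N)\arrow[d, two heads,"(p_1{,}p_2)"]\\
(QA,\overline{M})\arrow[urr,dashed,rightsquigarrow,"H"]\arrow[rr,rightsquigarrow,"{(l^1,l^2)}"]& & (B,N)\times(B,N),
\end{tikzcd}
\end{center}
and Lemma \ref{lemma21} supplies a lift $H$, which is the required right homotopy from $l^1$ to $l^2$.

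The principal technical point is to check that the path object $\op{Path}(B,N)$ and the product $(B,N)\times(B,N)$ really fit the cofibrancy hypotheses of Lemma \ref{lemma21}; namely, that the underlying cdga of $(B,N)\times(B,N)$ is semi-free and that the relevant modules are cell complexes, so that $(p_1,p_2)$ qualifies as a fibration with the form required by the lemma. This is part of the category-of-fibrant-objects structure established in \cite{P}, and is what makes the lifting-by-path-object argument legitimate in the SH LR setting. Everything else reduces to the two direct applications of Lemma \ref{lemma21} above.
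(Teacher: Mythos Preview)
Your proof is correct and follows essentially the same route as the paper: part 1 is obtained by applying Lemma \ref{lemma21} to the square with terminal object $(k,0)$ in the bottom right, and part 2 by applying it again with the path object $\op{Path}(B,N)\twoheadrightarrow (B,N)\times(B,N)$ on the right. Your additional remarks verifying the cofibrancy and fibration hypotheses are helpful elaborations of what the paper leaves implicit via the reference to \cite{P}.
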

\begin{proof}
Part 1 is a special case of Lemma \ref{lemma21}. For part 2, suppose $l_1$ and $l_2$ are two such lifts. Applying lemma \ref{lemma21} to the commutative diagram
\begin{center}
\begin{tikzcd}
(A,M)\arrow[d, hook,"i"]\arrow[d, hook,swap,"\sim"]\arrow[rr,"\imath_{\op{Path}}\circ f"] &  &\op{Path}(B,N) \arrow[d,two heads, "p_{\op{Path}}"]\\
(QA,\overline{M})\arrow[urr,dashed,"H"]\arrow[rr, "(l_1l_2)"]& &(B,N)\times (B,N),
\end{tikzcd}
\end{center}
we get the desired homotopy.
\end{proof}
\begin{cor}\label{GZ} Under 1-categorical Gabriel-Zisman localization, the category ${\tt SHLR}^{\op{cof}}(k)$ is equivalent to the category of SH LR pairs $(A,M)$ with arbitrary $A\in{\tt cdga}(k)$ and a cell complex $M\in {\tt Mod}(A)$.
\end{cor}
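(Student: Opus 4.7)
Write $\mathcal{C}$ for the category of SH LR pairs $(A,M)$ with $A \in {\tt cdga}(k)$ arbitrary and $M$ a cell complex in ${\tt Mod}(A)$, and $\mathcal{D} = {\tt SHLR}^{\op{cof}}(k)$ for its full subcategory with $A$ semi-free. Let $\imath: \mathcal{D} \hookrightarrow \mathcal{C}$ denote the inclusion. The plan is to construct a quasi-inverse $\overline{Q}: L\mathcal{C} \to L\mathcal{D}$ to $L\imath$, using Lemma~\ref{lemma20} on objects and Proposition~\ref{pr} on morphisms.

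For each $(A,M) \in \mathcal{C}$, Lemma~\ref{lemma20} produces $Q(A,M) := (QA,QM) \in \mathcal{D}$ together with a weak equivalence $\alpha_{(A,M)} := (r_A, \op{id}) : (A,M) \to (QA, QM)$; when $(A,M)$ already lies in $\mathcal{D}$, choose the trivial replacement so that $Q(A,M) = (A,M)$ and $\alpha_{(A,M)} = \op{id}$. The crucial observation is that $\alpha_{(A,M)}$ is not only a weak equivalence but also a trivial cofibration in the sense of Section~\ref{lift}: its cdga component $r_A : QA \to A$ is degreewise surjective (as a cofibrant replacement by a semi-free cdga), and its module component is the identity on $M = A \otimes_{QA} QM$, hence vacuously a relative cell complex.

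Given any $f : (A,M) \rightsquigarrow (B,N)$ in $\mathcal{C}$, apply Proposition~\ref{pr}(1) to the trivial cofibration $\alpha_{(A,M)}$ and the composite $\alpha_{(B,N)} \circ f$, whose target $Q(B,N)$ lies in $\mathcal{D}$, to obtain a lift $Qf : Q(A,M) \rightsquigarrow Q(B,N)$ with $Qf \circ \alpha_{(A,M)} = \alpha_{(B,N)} \circ f$. Proposition~\ref{pr}(2) makes $Qf$ unique up to right homotopy through the path object of \cite{P}, which descends to equality in $L\mathcal{D}$ since $\mathcal{D}$ is a category of fibrant objects. Functoriality of $Q$ follows by applying this uniqueness to pairs of composable lifts; the 2-out-of-3 property applied to $Qf \circ \alpha_{(A,M)} = \alpha_{(B,N)} \circ f$ shows $Q$ sends SH quasi-isomorphisms to isomorphisms, so $Q$ factors through $\overline{Q}: L\mathcal{C} \to L\mathcal{D}$ by the universal property of Gabriel-Zisman localization.

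Finally, $\overline{Q} \circ L\imath = \op{id}_{L\mathcal{D}}$ by the trivial choice made on $\mathcal{D}$, and $L\imath \circ \overline{Q} \simeq \op{id}_{L\mathcal{C}}$ via $\alpha$, whose naturality in $L\mathcal{C}$ is built into the defining equation $Qf \circ \alpha_{(A,M)} = \alpha_{(B,N)} \circ f$. The main technical point requiring care is the passage from homotopy uniqueness in Proposition~\ref{pr}(2) to equality in $L\mathcal{D}$: this relies on the general principle that in a category of fibrant objects, two morphisms related by a path-object homotopy become equal after inverting the weak equivalences, since the two projections of the path object become mutually inverse in the localization.
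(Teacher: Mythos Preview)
Your proof is correct and follows exactly the approach the paper intends: the corollary is stated without proof immediately after Proposition~\ref{pr}, and your argument is the natural elaboration --- use Lemma~\ref{lemma20} to replace $(A,M)$ by an object of ${\tt SHLR}^{\op{cof}}(k)$ via a trivial cofibration, then invoke Proposition~\ref{pr} for existence and homotopy-uniqueness of lifts. One small remark: Lemma~\ref{lemma20} as stated calls $(r_A,\op{id})$ a \emph{fibration}, but your reading of it as a trivial cofibration is the correct one (and is how the paper itself uses it in the BV section, e.g.\ in the lifting diagrams involving $f_{\overline{Q}}$); the word ``fibration'' there appears to be a slip.
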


\section{Cartesian fibration of dg Lie-Rinehart algebras}\label{cart}
In the final section, we restrict our attention to dg LR algebras over cdga-s of finite type, that is, to dg LR pairs $(A,M)$ where $A$ is a finite cell complex -- a semi-free cdga on finitely many generators. The category of dg LR pairs which satisfy this finiteness condition is denoted by ${\tt dgLR}^{\op{ft}}(k)$. Its full subcategory of cofibrant objects -- pairs $(A,M)$ such that $A$ a is a finite cell complex, and $M$ a cell complex in Nuiten's semi-model structure on dg LR algebras over $A$ -- is denoted by ${\tt dgLR}^{\op{ft,cof}}(k)$. Its full subcategory of Nuiten fibrant-cofibrant objects is denoted by ${\tt dgLR}^{\op{ft,fib,cof}}(k)$.

Main issue is that the pullback of dg LR algebras is somewhat ill-behaved: one one side, when applied to arbitrary dg LR algebras, it does not respect weak equivalences, while on the other side, pullback of a fibrant-cofibrant dg LR algebra in the Nuiten's semi-model structure does, but is itself not a fibrant-cofibrant dg LR algebra. Nice LR algebras are defined in a manner that their pullbacks respect quasi-isomorphisms, and are themselves nice LR algebras.

\begin{defi}
A dg Lie-Rinehart pair $(A,M)$ is \emph{nice} if
\begin{itemize}
\item $A \in {\tt cdga}(k)$ is a finite cell complex;
\item anchor $\Gamma_M:M\to\op{Der}(A)$ is surjective;
\item $M$ is projective as a graded $A$ module; and
\item tensoring $-\otimes_A M$ preserves quasi-isomorphism.
\end{itemize}
The full subcategory of ${\tt dgLR}^{\op{ft}}(k)$ consisting of nice LR pairs is denoted by  ${\tt dgLR}^{\op{nice}}(k)$.
\end{defi}
As promised:
\begin{lemma}
Pullback of a nice LR pair is a nice LR pair.
\end{lemma}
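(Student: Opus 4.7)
Assume $f_0\colon A\to B$ is a morphism of finite cell complexes in ${\tt cdga}(k)$, and let $(A,M)$ be nice. The condition that $B$ itself be a finite cell complex is assumed, and I will verify the remaining three conditions for $(B,f_0^!M)$ in turn. Since $A$ is a finite cell complex, $\Omega_A$ is finitely generated graded-free, so $\op{Der}(A,B)\cong B\otimes_A\op{Der}(A)$. Base-changing the surjective anchor $\Gamma_M\colon M\twoheadrightarrow\op{Der}(A)$ along $f_0$ yields a graded surjection $B\otimes_A M\twoheadrightarrow\op{Der}(A,B)$. Because pullbacks preserve surjections, the projection $f_0^!M\twoheadrightarrow\op{Der}(B,B)$ is surjective, which settles anchor surjectivity for $(B,f_0^!M)$.

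For graded-projectivity, I would pick a graded $A$-linear section $s\colon\op{Der}(A)\to M$ of $\Gamma_M$ (available because $\op{Der}(A)$ is graded-free) and set $M':=\ker\Gamma_M$. Base-changing $s$ along $f_0$ splits $B\otimes_A M\twoheadrightarrow\op{Der}(A,B)$ as graded $B$-modules, which together with the pullback square produces a graded $B$-module isomorphism
\[
f_0^!M \;\cong\; \op{Der}(B,B)\,\oplus\,(B\otimes_A M').
\]
The summand $\op{Der}(B,B)$ is finitely generated graded-free because $B$ is a finite cell complex, while $B\otimes_A M'$ is the base change of a summand of the projective graded $A$-module $M$. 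Hence $f_0^!M$ is graded-projective over $B$.

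The final condition is that $-\otimes_B f_0^!M$ preserve quasi-isomorphisms. Although the above splitting is only graded, the differentials on the pullback organise themselves into a short exact sequence of dg $B$-modules
\[
0\to B\otimes_A M'\to f_0^!M\to\op{Der}(B,B)\to 0
\]
which is graded-split, hence remains exact after tensoring with any dg $B$-module. By the long exact sequence in homology, it suffices to show both outer terms are homotopy flat over $B$. The complex $\op{Der}(B,B)$ is a bounded complex of finitely generated free $B$-modules (since $B$ is a finite cell complex), hence perfect and homotopy flat. For $B\otimes_A M'$, I would first establish that $M'$ is homotopy flat over $A$ by the same two-out-of-three trick applied to the graded-split dg sequence $0\to M'\to M\to\op{Der}(A)\to 0$: its end terms $M$ (by niceness) and $\op{Der}(A)$ (perfect, since $A$ is a finite cell complex) are homotopy flat, so $M'$ is as well. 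Homotopy flatness of $M'$ over $A$ then transfers along base change through the identification $N\otimes_B(B\otimes_A M')\cong N\otimes_A M'$ for any dg $B$-module $N$ (viewed as an $A$-module via $f_0$).

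The main obstacle I anticipate is the gap between the graded and dg worlds: there is no reason for $\Gamma_M$ to admit a \emph{dg} section, so the direct sum decomposition of $f_0^!M$ lives only at the level of graded $B$-modules and must be upgraded to the dg short exact sequence above. Consequently, homotopy flatness must be propagated through graded-split short exact sequences by two-out-of-three, rather than inherited summand-by-summand, and the auxiliary dg $A$-module $M'=\ker\Gamma_M$ has to be analysed in its own right. Everything else is formal bookkeeping about finite cell complexes.
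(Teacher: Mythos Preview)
Your argument is correct. For anchor surjectivity and graded projectivity you do exactly what the paper does: use that $\Omega_A$ is finite free to identify $\op{Der}(A,B)\cong B\otimes_A\op{Der}(A)$, then split off $\op{Ker}\Gamma_M$ to obtain the graded isomorphism $f_0^!M\cong\op{Der}(B)\oplus B\otimes_A M'$. The difference lies in the homotopy-flatness step. The paper argues more categorically: for any dg $B$-module $N$, tensoring the defining pullback square of $f_0^!M$ by $N$ still yields a homotopy pullback (the right vertical stays surjective), and a quasi-isomorphism $N\to N'$ then induces a weak equivalence of cospans, hence of their homotopy pullbacks. You instead run two-out-of-three along the graded-split dg sequences $0\to M'\to M\to\op{Der}(A)\to 0$ and $0\to B\otimes_A M'\to f_0^!M\to\op{Der}(B)\to 0$, using perfectness of $\op{Der}(A)$ and $\op{Der}(B)$. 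Your route is more elementary and makes explicit why niceness propagates through the kernel $M'$; the paper's route is shorter and avoids isolating $M'$ as a dg object, but relies on recognising that the pullback square stays homotopy-cartesian after tensoring. Both are fine.
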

\begin{proof}
Given a morphism of finite type cdga-s $f:A\to B$, and a nice LR pair $(B,M)$, its pullback $(A,f^! M)$ is the (homotopy) pullback
\begin{center}
\begin{tikzcd}
f^! M\arrow[rr]\arrow[d,two heads,"\Gamma_{f^! M}"]&&M\otimes_A B\arrow[d,two heads,"\Gamma_M\otimes_A B"]\\
\op{Der}(B)\arrow[r]&\op{Der}(A,B)\arrow[r, equal]&\op{Der}(A)\otimes_AB.
\end{tikzcd}
\end{center}
Clearly, the anchor is surjective. As graded $B$-modules, both $M\otimes_A B$ and $\op{Der}(A)\otimes_AB$ are projective. Hence the graded $B$-module
\begin{equation}\label{pullback}f^! M\cong \op{Der}(B)\oplus \op{Ker}(\Gamma_M)\otimes_A B\end{equation}
is also projective. It remains to prove that the tensor product $-\otimes_B f^! M$ preserves quasi-isomorphisms. Given a $B$-module $N$, the tensor product $f^! M\otimes_B N$ is the pullback
\begin{center}
\begin{tikzcd}
f^! M\otimes_B N\arrow[r]\arrow[d,two heads,"\Gamma_{f^! M}\otimes_B N"]&M\otimes_A B \otimes_B N\arrow[d,two heads,"\Gamma_M\otimes_A B \otimes_B N"]\\
\op{Der}(B)\otimes_B N\arrow[r]&\op{Der}(A,B)\otimes_B N.
\end{tikzcd}
\end{center}
A quasi-isomorphism $N\to N'$ induces a weak equivalence of the cospan diagrams
$$M\otimes_A B \otimes_B N\twoheadrightarrow\op{Der}(A,B)\otimes_B N\leftarrow \op{Der}(B)\otimes_B N$$
and
$$M\otimes_A B \otimes_B N'\twoheadrightarrow\op{Der}(A,B)\otimes_B N'\leftarrow \op{Der}(B)\otimes_B N'.$$
The map
 $f^! M\otimes_B N\to f^! M\otimes_B N'$ is the induced quasi-isomorphism between their (homotopy) pullbacks.
\end{proof}
When applied to nice dg LR pairs, pullback is homotopy coherent:
\begin{prop}\label{propalica}
The functor $$\pi:{\tt dgLR}^{\op{nice}}(k)\to{\tt gdca}^{\op{ft}}(k)^{\op{op}},\hspace{10pt}(A,M)\mapsto A$$ is a Cartesian fibration.
\end{prop}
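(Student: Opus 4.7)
The plan is to exhibit, for each morphism $f_0\colon A\to B$ in ${\tt cdga}^{\op{ft}}(k)$ (equivalently an arrow $B\to A$ in the opposite category) and each $(A,M)\in{\tt dgLR}^{\op{nice}}(k)$, a $\pi$-Cartesian morphism with target $(A,M)$ lying over this arrow. The candidate is the canonical pullback morphism $(f_0,p)\colon(B,f_0^!M)\to(A,M)$ described in the paragraph preceding Proposition~\ref{fact1}. The preceding lemma (pullback of nice is nice) ensures $(B,f_0^!M)\in{\tt dgLR}^{\op{nice}}(k)$, so the candidate lives in the total category.

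For the strict $1$-categorical universal property, I would combine Proposition~\ref{fact1} with transitivity of the pullback operation. Given $(C,L)\in{\tt dgLR}^{\op{nice}}(k)$ and a morphism $(g_0,g)\colon(C,L)\to(A,M)$ whose algebra component factors as $g_0=h_0\circ f_0$, Proposition~\ref{fact1} provides a unique factorization $(C,L)\to(C,g_0^!M)\to(A,M)$, while the canonical isomorphism $g_0^!M\cong h_0^!(f_0^!M)$ of $C$-modules produces a canonical projection $(C,g_0^!M)\to(B,f_0^!M)$ over $h_0$. Their composite is the required unique lift $(h_0,h)\colon(C,L)\to(B,f_0^!M)$ of $h_0$ refining $(g_0,g)$.

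To promote this to a Cartesian fibration in the homotopically meaningful sense (the fibers being presentable $\infty$-categories, as announced in the introduction), one must verify that for every $(C,L)\in{\tt dgLR}^{\op{nice}}(k)$ the commutative square of mapping spaces
$$\op{Map}((C,L),(B,f_0^!M))\to\op{Map}((C,L),(A,M))\times^h_{\op{Map}_{\op{cdga}}(A,C)}\op{Map}_{\op{cdga}}(B,C)$$
is a weak equivalence. The niceness hypotheses on $M$ are engineered precisely for this: surjectivity of $\Gamma_M$ makes the right leg of the defining pullback $f_0^!M=\op{Der}(B)\times_{\op{Der}(A,B)}B\otimes_AM$ a degreewise surjection, so the strict pullback agrees with the homotopy pullback; projectivity of $M$ as a graded $A$-module together with the homotopy invariance of $-\otimes_AM$ ensures $B\otimes_AM$ and its further base changes carry the correct homotopy type. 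Using the splitting $f_0^!M\cong\op{Der}(B)\oplus\op{Ker}(\Gamma_M)\otimes_A B$ from equation~\ref{pullback}, the mapping-space comparison decomposes into a derivation piece and a module piece, each reducing to a standard homotopy pullback in ${\tt cdga}(k)$ and ${\tt Mod}(A)$ respectively.

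The chief obstacle will be computing these mapping spaces rigorously: ${\tt dgLR}^{\op{nice}}(k)$ is not itself a model category, so one must either frame it simplicially or, more naturally here, pass through the equivalence of Theorem~\ref{theo1} with ${\tt SHLR}^{\op{cof}}(k)$ and invoke the path object construction from \cite{P} to model mapping spaces. Once suitable cofibrant replacements are chosen on the source $(C,L)$, the three niceness conditions on $M$ combine to make the comparison map a termwise quasi-isomorphism, completing the argument.
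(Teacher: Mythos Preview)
Your strict $1$-categorical argument mirrors the paper's: both invoke Proposition~\ref{fact1} for the locally Cartesian property and verify $g_0^!(f_0^!M)\cong(f_0\circ g_0)^!M$ (from the graded-module description \ref{pullback}) to conclude that $\pi$ is a Grothendieck fibration of ordinary categories.

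For the $\infty$-categorical upgrade the paper takes a different and considerably shorter route. Rather than computing mapping spaces directly, it applies Hinich's criterion \cite[Proposition~2.1.4]{H}: a functor of relative categories which is already a Grothendieck fibration localizes to a Cartesian fibration of $\infty$-categories provided the associated functor of marked simplicial sets is a marked Cartesian fibration in the sense of \cite[Definition~2.1.1]{H}. Three of the four defining conditions are immediate; the remaining one asks that pullback along a weak equivalence $f\colon A\to B$ in the base induce an equivalence of fiber $\infty$-categories. Since objects of ${\tt cdga}^{\op{ft}}(k)$ are fibrant-cofibrant, such $f$ admits a quasi-inverse $g$, and the natural weak equivalences $(g\circ f)^!M\to M$ and $(f\circ g)^!M\to M$ exhibit $f^!$ and $g^!$ as mutually inverse equivalences via \cite[Proposition~7.5]{Barwick-Kan}. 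No mapping spaces are ever computed.

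Your proposed direct verification faces real obstacles. The splitting $f_0^!M\cong\op{Der}(B)\oplus\op{Ker}(\Gamma_M)\otimes_A B$ of equation~\ref{pullback} is only an isomorphism of graded $B$-modules, not of dg LR algebras, so it cannot decompose ${\tt dgLR}$-mapping spaces into ``a derivation piece and a module piece'' as you suggest. More seriously, nice dg LR pairs need not lie in ${\tt dgLR}^{\op{cof}}(k)$: niceness requires $M$ to be graded-projective and homotopy-flat, not a cell complex. Hence neither the equivalence of Theorem~\ref{theo1} nor the path objects of \cite{P} apply to them without an additional comparison you have not supplied (and which is in fact the content of Theorem~\ref{cartfib}, proved \emph{after} the present proposition). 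The paper's use of Hinich's criterion sidesteps both issues by reducing to a statement about the fibers over weak equivalences rather than about Cartesian arrows individually.
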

\begin{proof}
Given $(A,M)\in {\tt dgLR}^{\op{nice}}(k)$, and a morphism $f:A\to B$ in ${\tt gdca}^{\op{ft}}(k)$, the universal map $(f,p):(B, f^!M)\to (A,M)$ is locally Cartesian (Proposition \ref{fact1}). To conclude that $\pi$ is a Grothendieck fibration (of 1-categories), it remains to show that, given maps $f:A\to B$, $g:B\to C$, and a nice dg LR algebra $M$ over $A$,  $g^!(f^!M)=(g\circ f)^!M$. However, this is an immediate consequence of the isomorphism \ref{pullback}. 

Denote by $N$ the nerve functor, by $V$ weak equivalences in ${\tt dgLR}^{\op{nice}}(k)$, and by $W$ weak equivalences in ${\tt gdca}^{\op{ft}}(k)^{\op{op}}$. To prove that $\pi$ is a Cartesian fibration, it suffices, by \cite[Proposition 2.1.4.]{H}, to show that the functor of marked simplicial sets
$$\pi:(N({\tt dgLR}^{\op{nice}}(k)),N(V))\to (N({\tt gdca}^{\op{ft}}(k)^{\op{op}}),N(W))$$
is a marked Cartesian fibration in the sense of \cite[Definition 2.1.1.]{H}. Defining property (1) holds because $\pi$ is a Grothendieck fibration; properties (2) and (3) are immediate. For the property (4), observe that, objects in ${\tt gdca}^{\op{ft}}(k)$ being fibrant-cofibrant, for every quasi-isomorphism $f:A\to B$, there exists another quasi-isomorphism $g:B\to A$. Universal maps $(g\circ f)^!M\to M$ and $(f\circ g)^!M\to M$ are  natural weak equivalences of nice dg LR algebras, respectively over $A$ and $B$. Hence, by  \cite[Definition 3.3, Proposition 7.5]{Barwick-Kan}, functors $g^!$ and $f^!$ form an equivalence of infinity categories between nice dg LR algebras over $A$ and $B$, as required.
\end{proof}

We are finally prepared to prove the main result of the section:
\begin{theo}\label{cartfib}
    The functor
    $${\tt dgLR}^{\op{ft, cof}}(k)\to {\tt cdga}^{\op{ft}}(k)^{\op{op}},\hspace{10pt}(A,M)\mapsto A$$
    decomposes into an equivalence of infinity categories ${\tt dgLR}^{\op{ft, cof}}(k)\cong{\tt dgLR}^{\op{nice}}(k)$ followed by the Cartesian fibration provided by the proposition \ref{propalica}. 
\end{theo}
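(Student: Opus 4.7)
The plan is to establish the equivalence of $\infty$-categories ${\tt dgLR}^{\op{ft, cof}}(k) \simeq {\tt dgLR}^{\op{nice}}(k)$; commutativity of the resulting triangle over ${\tt cdga}^{\op{ft}}(k)^{\op{op}}$ is then automatic, since the equivalence will be realized by functors acting as the identity on base cdga-s, and composition with the Cartesian fibration of Proposition \ref{propalica} recovers $(A,M) \mapsto A$.

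I would identify a common full subcategory ${\tt dgLR}^{\op{ft,cof,nice}}(k) := {\tt dgLR}^{\op{ft,cof}}(k) \cap {\tt dgLR}^{\op{nice}}(k)$ and show that both inclusions
$${\tt dgLR}^{\op{ft,cof}}(k) \;\hookleftarrow\; {\tt dgLR}^{\op{ft,cof,nice}}(k) \;\hookrightarrow\; {\tt dgLR}^{\op{nice}}(k)$$
are Dwyer--Kan equivalences. Any cofibrant LR pair in ${\tt dgLR}^{\op{ft,cof}}(k)$ is, by Remark \ref{remark}, free as a graded $A$-module (hence projective) and tensoring with it preserves quasi-isomorphisms; thus the only condition separating the overlap from ${\tt dgLR}^{\op{ft,cof}}$ is surjectivity of the anchor.

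For the right-hand inclusion, given $(A, M) \in {\tt dgLR}^{\op{nice}}(k)$, I would apply Nuiten's cofibrant replacement $q: (A, Q_A M) \twoheadrightarrow (A, M)$ over the fixed base $A$, functorial in $(A,M)$. The counit is a trivial fibration in Nuiten's structure, hence degreewise surjective; combined with surjectivity of $\Gamma_M$, this forces $\Gamma_{Q_A M}$ to be surjective as well. Cofibrancy of $Q_A M$ is immediate and the remaining nice conditions (projectivity of the underlying graded module and flatness of the tensor product) are inherited from cofibrancy. Thus $Q_A$ provides a functorial weak-equivalence retract onto the overlap. For the left-hand inclusion, I would construct a functorial trivial cofibration $(A, M) \hookrightarrow (A, \tilde M)$ inside ${\tt dgLR}^{\op{ft,cof}}(k)$ whose target lies in the overlap: for each element of a finite $A$-linear generating set of $\op{coker}(\Gamma_M)$, attach a cell consisting of a generator $y$ with $\Gamma(y)$ a chosen lift of the class, together with contracting data killing the homology class introduced. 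Finiteness of $A$ (hence finite generation of $\op{Der}(A)$ over $A$) ensures only finitely many cells are attached, so $\tilde M$ remains a finite cell complex.

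The main obstacle will be verifying that this anchor-saturating cell attachment is a trivial cofibration in Nuiten's semi-model structure and can be carried out compatibly with the bracket and Leibniz rule. The cleanest route is to exhibit the attachment as a pushout along a generating trivial cofibration of Nuiten's structure, of the form $\op{LR}_A(V) \hookrightarrow \op{LR}_A(\op{cone}(V))$ for $V$ a finitely generated anchored $A$-module whose anchor surjects onto $\op{coker}(\Gamma_M)$. By an analogue of the filtration argument in the proof of Lemma \ref{Lemma2}, the cone inclusion is a trivial cofibration of free LR $A$-algebras; the resulting pushout is then both a cofibration and a weak equivalence. Combined with the right-inclusion argument and the fact that the class of weak equivalences coincides on the overlap, this yields the desired equivalence of infinity categories.
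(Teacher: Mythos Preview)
Your decomposition through the overlap ${\tt dgLR}^{\op{ft,cof,nice}}(k)$ is exactly the paper's decomposition: that overlap coincides with ${\tt dgLR}^{\op{ft,fib,cof}}(k)$, since Nuiten-fibrant means precisely that the anchor is surjective, and cofibrant objects automatically satisfy the remaining niceness conditions. So the strategy is right; the execution has a gap on both sides concerning functoriality over a \emph{variable} base.

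For the right-hand inclusion you invoke Nuiten's cofibrant replacement and call it ``functorial in $(A,M)$''. It is functorial only for morphisms over a fixed $A$; morphisms in ${\tt dgLR}^{\op{nice}}(k)$ change the base cdga, and there is no evident way to make the small-object-argument replacement compatible with base change. The paper flags this explicitly and does not attempt it: instead it imports Hinich's argument \cite[Proposition 1.3.4]{H}, which shows that the inclusion of (fibrant-)cofibrant objects is a DK equivalence even without a functorial cofibrant replacement, provided certain fibrewise model-categorical constructions (finite fibre products, pullback of fibrations, Reedy-style factorizations and lifts in $({\tt dgLR}^{\op{nice}}(k))^{[n]}_{\op{const}}$) are available. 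Those are the two lemmas stated inside the proof.

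For the left-hand inclusion your anchor-saturating attachment has the same problem: choosing generators of $\op{coker}(\Gamma_M)$ and lifts thereof is not natural in $(A,M)$, so you do not get a functor ${\tt dgLR}^{\op{ft,cof}}(k)\to{\tt dgLR}^{\op{ft,cof,nice}}(k)$. The paper's fix is to replace the ad hoc attachment by a canonical one: set $R(A,0)=\mathcal{L}_AC_A(\overline{0})$, built from K\"ahler differentials via equation \eqref{ass}, and take the pushout $(A,M)\coprod_{(A,0)}R(A,0)$. Since $\Omega_A$ and the adjunctions \eqref{adjunkcija2}, \eqref{main_adj} are functorial in $A$, this fibrant replacement is functorial over a variable base, and the map into it is a trivial cofibration because $(A,0)\to R(A,0)$ is a weak equivalence of cofibrant objects by construction.
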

\begin{proof}
Denote by $R(A,0)$ the dg LR pair obtained by applying the functor $\mathcal{L}_AC_A$ to the  SH LR algebra $(A,\overline{0})$ (defined in equation \ref{ass}). Clearly, $R(A,0)$ is Nuiten fibrant-cofibrant, and the map of dg LR pairs $(\op{id}_A,0):(A,0)\to R(A,0)$, which is the composition of the unit in \ref{adjunkcija2} with that in \ref{main_adj}, is a weak equivalence. Hence, given a cell complex $M$ in the semi-model category of dg LR algebras over $A$, the coproduct $$(A,M)\hookrightarrow (A,M)\coprod_{(A,0)}R(A,0)$$ is a cofibrant-fibrant replacement by a cell complex. Importantly, in contrast with the cofibrant-fibrant replacement guaranteed by the semi-model structure, it is functorial in ${\tt dgLR}^{\op{ft,cof}}(k)$, that is over a variable base. Hence it is an equivalence of $\infty$-categories
$${\tt dgLR}^{\op{ft,cof}}(k)\cong{\tt dgLR}^{\op{ft,fib,cof}}(k).$$

It still remains to prove that the inclusion ${\tt dgLR}^{\op{ft,fib,cof}}(k)\hookrightarrow {\tt dgLR}^{\op{nice}}(k)$ is also an equivalence of $\infty$-categories. Here, we follow the approach  of \cite[Proposition 1.3.4]{H}. The cited result is that in the presence of a model category structure without a functorial cofibrant replacement, inclusion of (fibrant)-cofibrant objects into the category (of fibrant objects) is still an equivalence of $\infty$-categories. As it turns out, all the model-categorical constructions within the proof are valid in our setting. In fact, the proof carries over verbatim; all that is needed are the following properties, which are immediate.
\begin{lemma}\begin{enumerate}
\item
Given nice dg LR algebras $L,M$, and $N$ over $A$, and a span diagram $L\to M \twoheadleftarrow N$, the product $L\times_M N$ is again a nice dg LR algebra over $A$.
\item Given a fibration $g:M\to N$ of nice dg LR algebras over $A$, and a morphism $g:A\to B$ in ${\tt cdga}^{\op{ft}}(k)$, the map
$f^!g:f^!M\to f^! N$ is a fibration of nice dg LR algebras over $B$.
\end{enumerate}
\end{lemma}
\begin{lemma}
Denote by $[n]$ the category whose objects are the numbers $0,1,\ldots, n$, with a unique morphism $i\to j$ whenever $i\leq j$. Denote by $({\tt dgLR}^{\op{nice}}(k))^{[n]}_{\op{const}}$ the full subcategory of the functor category $({\tt dgLR}^{\op{nice}}(k))^{[n]}$, whose morphisms, denoted by $p:(A,M)\to (A,N)$, are objectwise identity on the cdga-component
\begin{center}
\begin{tikzcd}[column sep=large]
(A_0,M_0)\ar{r}{(f_0,g_0)}\ar{d}{(\op{id}_{A_0},p_0)}&\cdots\ar{r}{(f_{n-1},g_{n-1})}&(A_n,M_n)\ar{d}{(\op{id}_{A_n},p_n)}\\
(A_0,N_0)\ar{r}{(f_0,h_0)}&\cdots\ar{r}{(f_{n-1},h_{n-1})}&(A_n,N_n).
\end{tikzcd}
\end{center}
A map  $p:(A,M)\to (A,N)$ in $({\tt dgLR}^{\op{nice}}(k))^{[n]}_{\op{const}}$ is called \emph{fibration} if for all $k$, the universal morphism
$$M_k\to N_k\times_{f_{k}^! N_{k+1}}f_{k}^!M_{k+1}$$
is a fibration in the semi-model structure on dg LR algebras over $A_k$. A fibration which is also a objectwise weak equivalence is called a \emph{trivial fibration}. The following results hold in $({\tt dgLR}^{\op{nice}}(k))^{[n]}_{\op{const}}$:
\begin{enumerate}
\item Every morphism factors into an objectwise cofibration followed by a trivial fibration.
\item Given a morphism $f:(A,M)\to (A,N)$ with $(A,M)$ objectwise cofibrant, and a trivial fibration $g:(A,QN)\to(A,N)$, there is a morphism $l:(A,M)\to (A,QN)$ such that $f=g\circ l$.
\end{enumerate}
\end{lemma}
\end{proof}

\appendix
\section{Differential graded pro-algebras} \label{Ap}
In the appendix, we recall the preliminary results from \cite{P}.

A partial ordered set $I$ is said to be \emph{cofiltered} if it is non-empty; and for every two elements $i,j\in I$ there exists an element $k\in I$ such that $k \geq i$, $k\geq j$. Inverse system in a category $\mathcal{C}$ is a functor $X: I\to \mathcal{C}$ for a cofiltered set $I$, often denoted by $(X_i)_{i\in I}$. For $i\geq j $, associated morphism $X_i\to X_j$ is denoted by $p_{ij}$.
The category $\op{Pro}(\mathcal{C})$ has as objects inverse systems in $\mathcal{C}$, and morphisms are
$$\op{Hom}_{\op{Pro}(\mathcal{C})}(X,Y)=\varprojlim_i\varinjlim_j \op{Hom}_{\mathcal{C}}(X_j,Y_i).$$
Equivalently, morphisms are equivalence classes of morphisms of inverse systems (\cite{MS}):

Given inverse systems $(X_\alpha,p_{\alpha,\alpha'})$ and $(Y_\beta,p_{\beta,\beta'})$  in $\mathcal{C}$, morphism of inverse systems $(X_\alpha)\to(Y_\beta)$ consists of a function $\phi:B\to A$ and of morphisms $f_\beta:X_{\phi(\beta)}\to Y_\beta,$ one for each $\beta\in B$, such that for all $\beta,\beta'\in B$, $\beta>\beta'$ there exists $\alpha>\phi(\beta),\phi(\beta')$ for which the diagram
\begin{center}
\begin{tikzcd}
X_{\phi(\beta)}\arrow[d,"f_\beta"]&\arrow[l,swap,"p_{\alpha,\phi(\beta)}"]X_{\alpha}\arrow[r,"p_{\alpha,\phi(\beta')}"]&X_{\phi(\beta')}\arrow[d,"f_{\beta'}"]\\
Y_{\beta}\arrow[rr,"p_{\beta,\beta'}"]&&Y_{\beta'}
\end{tikzcd}
\end{center}
commutes. Morphisms of inverse systems are denoted by $(f_\beta,\phi).$ We say that morphisms of inverse systems $(f_\beta,\phi)$ and $(g_{\beta},\psi)$ are equivalent if for every $\beta\in B$ there exists $\alpha\in A$, $\alpha>\phi(\beta),\psi(\beta)$ such that the diagram
\begin{center}
\begin{tikzcd}
X_{\phi(\beta)}\arrow[dr,"f_\beta"]&\arrow[l,swap,"p_{\alpha,\phi(\beta)}"]X_{\alpha}\arrow[r,"p_{\alpha,\psi(\beta)}"]&X_{\psi(\beta)}\arrow[dl,"g_{\beta}"]\\
&Y_{\beta}&
\end{tikzcd}
\end{center}
commutes.

Given a differential graded graded-commutative $k$-algebra $A$, and a morphism of graded $A$-modules $f:M\to N$, $f$-differential, denoted by $\op{Diff}_f(M,N)$ is a $k$-linear degree $1$ map $d:M\to N$ such that
$$d(am)=d(a)f(m)+(-1)^{|a|}ad(m).$$ Space of $f$-differentials is denoted by $\op{Diff}_f(M,N).$ Finally, a differential graded pro-module over $A$ is a pro object $(M_i)_{i\in I}$ in the category of graded $A$-module, together with a square zero differential
$$d\in \varprojlim_i\varinjlim_j \op{Diff}_{p_{ij}}(M_j,N_i).$$
Morphism of differential graded pro-modules is a map $f$ of graded pro-modules which respects the differentials.

Similarly, given a morphism $f:A\to B$ of differential graded graded-commutative $k$-algebras, $f$-differential, denoted by $\op{Diff}_f(A,B)$ is $k$-linear degree $1$ map $d:M\to N$ such that
$$d(ab)=d(a)f(b)+(-1)^{|a|}f(a)d(b).$$ Differential graded pro-algebra is a pro object in the category of graded commutative algebras $(A_i)$ together with a square zero differential
$$d\in \varprojlim_i\varinjlim_j \op{Diff}_{p_{ij}}(A_j,B_i).$$ Morphisms of dg-pro algebras are defined likewise.

By Lazard's theorem (or better its slight generalization to graded modules),  given a graded-commutative algebra $A$, the category of flat graded $A$-modules is equivalent to the ind-category of free finite $A$-modules, which is by dualization equivalent to the opposite of the pro-category of free finite $A$-modules. Namely, every flat module is an injective limit of free finite modules, so
$$\op{Hom}_{\tt Mod (A)}(\varinjlim_i {\tt M}_i, \varinjlim_j {\tt N}_j)=\varprojlim_i\op{Hom}_{\tt Mod (A)}( {\tt M}_i, \varinjlim_j {\tt N}_j)=\varprojlim_i\varinjlim_j\op{Hom}_{\tt Mod (A)} ({\tt M}_i,  {\tt N}_j)=\varprojlim_i\varinjlim_j\op{Hom}_{\tt Mod (A)} ({\tt N}_j^*,{\tt M}_i ^*).$$ Consequently, the category of differential graded-flat $A$-modules is equivalent to the opposite category of differential graded pro-finite $A$-modules. In this sense, we say that all graded-flat modules are dualizable.

Given a graded-flat $A$-module $M=\varinjlim_i M_i$,  the graded commutative algebra $$\op{Sym}^{\leq k}M_i^*:=\op{Sym}_AM_i^*/\op{Sym}^{>k}M_i^*$$  is a finite extension of $A$. The completed symmetric algebra on $M^*$ is a graded-commutative pro-algebra $\widehat{\op{Sym}_A}M^*=(\op{Sym}^{\leq k}_AM_i^*)_{(i,k)\in I\times\mathbb{N}}$. A differential
$$d_{(i,n)(j,m)}\in \op{Diff}_{p_{(i,n)(j,m)}}(\op{Sym}^{\leq n}_AM_i^*, \op{Sym}^{\leq m}_AM_j^*)\text{, for }i\geq j, n\geq m$$
is uniquely determined by its restrictions to $A$ and $M_i^*$, and hence admits a weight decomposition, with $f_{(i,n)(j,m)}^k(A)\in \op{Sym}^{ k}M_j^*$ (zero if $k>m$), and $f_{(i,n)(j,m)}^k(M_i^*)\in \op{Sym}^{k+1}M_j^*$  (zero if $k>m$). This induces a weight decomposition of a given differential $d_{\op{CE}}$ on $\widehat{\op{Sym}}_AM^*$. A differential $d_{\op{CE}}$ whose weight -1 component vanishes is equivalent to a SH LR structure on the pair $(A,M)$. Its weight zero component is equivalently a differential on $A$ together with a structure of a differential graded pro-$A$-module $M^*$, dually a structure of a differential graded dg $A$-module on $M$. 

Similarly, morphisms of differential graded pro-algebras
$$f:(\widehat{\op{Sym}_A}M^*,d_{\op{CE}})\to (\widehat{\op{Sym}_B}N^*,d_{\op{CE}})$$ admit weight decomposition. Morphisms of SH LR pairs are defined as opposite to such maps whose weight $-1$ component vanishes. In this case, the weight zero component is equivalently a map of cdga-s $A\to B$ together with a map of dg pro-finite $B$-modules\footnote{$B\otimes_A M^*:=(B\otimes_A M_i^*)_i $} $B\otimes_A M^*\to N^*$, dually, a morphism of dg $B$-modules $N\to B\otimes_A M$. We say that $f$ is a weak equivalence if both maps are quasi-isomorphisms. Observe that the notion of weak equivalences is homotopy coherent only if $B\otimes_A M$ is of correct homotopy type.

Take $A\in {\tt cdga}(k)$, and a free graded $A$-module $M={A}\langle m_i\rangle_{i\in I}$. Denoting by $\vec{I}$ the small category of finite subsets of $I$ and inclusions,
$$ {M}={A}\langle m_i\rangle_{i\in I}=\varinjlim_{J\in \vec{I}} {A}\langle m_j\rangle_{j\in J}.$$
Its dual pro-finite module is represented by the inverse system of surjections ${A}\langle m_j^*\rangle_{j\in J}\twoheadrightarrow {A}\langle m_{j'}^*\rangle_{j'\in J'}$, for $J'\subset J$, whose projective limit is the graded $A$ module $\prod_{i\in I} A\langle m_i^* \rangle$, denoted by ${A}\langle\langle m_i^*\rangle\rangle_{i\in I}$. Differential on the inverse system induces a differential on the projective limit.  If in the limit,
\begin{equation}\label{condition}dm_i^*=\sum_j a^j_i m_j^*,\end{equation}
then for a fixed $j$ there are at most finitely many indices $i$ with $a^i_j\neq 0$.
 Conversely, given a family $(a^i_j:i,j\in I)$ which satisfies the same finiteness condition, there exists a unique differential on ${ A}\langle\langle m_i^*\rangle\rangle_{i\in I}$ which lifts to  a differential of graded pro-modules and satisfies Equation (\ref{condition}). The differential in the limit squares to zero if and only if its lift does. The associated dg pro-$A$-module is dual to a cell complex of $A$-modules if and only if the differential satisfies rising condition $a^i_j=0$ for $i\leq j$. An important observation is that when $M$ is itself a cell complex in $A$-modules, the projective limit of its dual together with the induced differential is equal to the naive dual $\underline{\op{Hom}}_{{\tt Mod}(A)}(M,A)$. 

The same holds for morphisms. A morphism of graded pro-$A$-modules $f^*:{A}\langle\langle m_i^*\rangle\rangle_{i\in I}\to { A}\langle\langle n_j^*\rangle\rangle_{j\in J}$ induces in the limit a map of graded $A$-modules. If in the limit,
\begin{equation}\label{conditionm}f^*(m_i^*)=\sum_j a^j_i n_j^*,\end{equation}
then for a fixed $j$ there are at most finitely many indices $i$ with $a^i_j\neq 0$.  Conversely, given a family $(a^j_i:i\in J,j\in I)$ which satisfies the same finiteness condition, there exists a unique morphism in the limit, which lifts to a map of graded pro-modules and satisfies Equation (\ref{condition}). The map in the limit respects the differential if and only if its lift does.

Projective limit of the inverse system of graded-commutative algebras $\widehat{\op{Sym}}_{A}({M^*})$
is denoted by ${ A}[[m_i^*]]_{i\in I}$. Its elements are formal power series
$$y=\sum_{\substack{n\in \mathbb{N}\\i_1\ldots i_n\in I}}a^{i_1\ldots i_n}m_{i_1}^*\cdots m_{i_n}^*.$$
A differential on ${ A}[[m_i^*]]_{i\in I}$ which lifts to the level of graded pro-algebras  is uniquely determined by
$$d_{M^*}|_A\in\op{Der}(A,A[[m_i^*]]),\hspace{10pt} d_{M^*}m_i^*=\sum_{\substack{n\in \mathbb{N}_{>0}\\i_1\ldots i_n\in I}}a_i^{i_1\ldots i_n}m_{i_1}^*\cdots m_{i_n}^*,$$
where for any n-tuple $(i_1\ldots i_n)\in I^{\times n}$ there is at most finitely many $i\in I$ for which the coefficient $a_i^{i_1\ldots i_n}$ is non-zero. Differential squares to zero if and only if its lifts does.

Similarly, a morphism $f^*:{ A}[[m_i^*]]_{i\in I}\to{ B}[[n_j^*]]_{j\in J}$ which lifts to the level of graded pro-algebras is uniquely determined by
$$f^*|_{ A}\in\op{Hom}_{{ Mod}({ A})}({ A},{B}[[n_j^*]]),\hspace{10pt} f^*(m_i^*)=\sum_{\substack{n\in \mathbb{N}_{>0}\\j_1\ldots j_n\in J}}b_i^{j_1\ldots j_n}n_{j_1}^*\cdots n_{j_n}^*,$$
where for any n-tuple $(j_1\ldots j_n)\in J^{\times n}$ there is at most finitely many $i\in I$ for which the coefficient $b_i^{j_1\ldots j_n}$ is non-zero. It respects the differential if and only if its lift does.

\vfill
\Addresses

\end{document}